\newcommand{\p}{\mathfrak{p}}
\newcommand{\m}{\mathfrak{m}}
\newcommand{\R}{\mathbb{R}}
\begin{document}
\title{Von Neumann Regular $\mathcal{C}^{\infty}-$Rings and Applications}
%
%
\author{Jean Cerqueira Berni\inst{1} \and
Hugo Luiz Mariano\inst{2}}
\authorrunning{J. C. Berni and H. L. Mariano}
%
\institute{Institute of Mathematics and Statistics, University of S\~{a}o Paulo,
Rua do Mat\~{a}o, 1010, S\~{a}o Paulo - SP, Brazil. \email{jeancb@ime.usp.br} \and
Institute of Mathematics and Statistics, University of S\~{a}o Paulo,
Rua do Mat\~{a}o, 1010, S\~{a}o Paulo - SP, Brazil, \email{hugomar@ime.usp.br}}
\maketitle              
%
%
\begin{abstract}
	In this paper we present the notion of a von Neumann regular $\mathcal{C}^{\infty}-$ring, we prove some results about them  and we describe some of their properties. We prove, using two different methods, that the category of von Neumann regular $\mathcal{C}^{\infty}-$rings is a reflective  subcategory of $\mathcal{C}^{\infty}{\rm \bf Rng}$. We prove that every homomorphism between Boolean algebras can be represented by a $\mathcal{C}^{\infty}-$ring homomorphism between von Neumann regular $\mathcal{C}^{\infty}-$rings.
\end{abstract}

\section*{Introduction}

We assume that the reader is familiar with the notion of $\mathcal{C}^{\infty}-$rings, with its universal algebra (cf. \cite{BM1}) and with ``smooth commutative algebra" (cf. \cite{BM2}).\\

{\bf Overview of the Paper:}\\

In {\bf Section 1} we present a notion of von Neumann regular $\mathcal{C}^{\infty}-$ring, which is basically a $\mathcal{C}^{\infty}-$ring whose underlying commutative unital ring is a von Neumann regular ring and explore the main properties and characterizations of this concept. In {\bf Section 2}, we show, by two different methods, that every $\mathcal{C}^{\infty}-$ring has a ``closest'' von Neumann regular $\mathcal{C}^{\infty}-$ring, and define their von Neumann regular hull; moreover we describe some properties of the reflection functor as to the preservation of finite limits. In {\bf Section 3}, we present an application: we prove that every homomorphism between boolean algebras can be represented by a $\mathcal{C}^{\infty}-$ring homomorphism of von Neumann regular $\mathcal{C}^{\infty}-$rings.

\section{On von Neumann regular $\mathcal{C}^{\infty}-$rings}

We register a fact which is valid for any $\mathcal{C}^{\infty}-$ring as to idempotents and localizations:\\

\begin{lemma}\label{destino}Let $A$ be \underline{any} $\mathcal{C}^{\infty}-$ring and $e \in A$ an idempotent element. There are unique isomorphisms:
$$A\{ e^{-1}\} \cong \dfrac{A}{(1-e)} \cong A \cdot e := \{ a \cdot e | a \in A\}$$
\end{lemma}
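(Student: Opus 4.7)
The plan is to establish the two claimed isomorphisms in sequence, using the universal property of smooth localization for the first and an explicit map for the second.

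For $A\{e^{-1}\} \cong A/(1-e)$, I would argue in both directions via universal properties. Let $\eta_e \colon A \to A\{e^{-1}\}$ denote the smooth localization morphism and $q \colon A \to A/(1-e)$ the canonical projection. Since $e$ is idempotent, $e(1-e) = e - e^2 = 0$ in $A$; in $A\{e^{-1}\}$ the element $\eta_e(e)$ is invertible, so multiplying $\eta_e(e)\eta_e(1-e) = 0$ by $\eta_e(e)^{-1}$ forces $\eta_e(1-e) = 0$. Thus $(1-e) \subseteq \ker \eta_e$, and the universal property of the quotient yields a unique $\mathcal{C}^{\infty}$-ring homomorphism $\widetilde{\eta}_e \colon A/(1-e) \to A\{e^{-1}\}$ with $\widetilde{\eta}_e \circ q = \eta_e$. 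Conversely, in $A/(1-e)$ one has $q(e) = q(1) = 1$ since $1-e \in (1-e)$, so $q(e)$ is trivially invertible; by the universal property of $A\{e^{-1}\}$ there is a unique $\widetilde{q} \colon A\{e^{-1}\} \to A/(1-e)$ with $\widetilde{q} \circ \eta_e = q$. The two uniqueness clauses then force $\widetilde{q} \circ \widetilde{\eta}_e$ and $\widetilde{\eta}_e \circ \widetilde{q}$ to be the respective identities.

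For $A/(1-e) \cong A \cdot e$, I would examine the map $\psi \colon A \to A \cdot e$, $a \mapsto a \cdot e$. Viewing $A \cdot e$ as a commutative unital ring with unit $e$, $\psi$ is a surjective ring homomorphism: $\psi(ab) = abe = a e \cdot b e$ uses $e^{2} = e$, and $\psi(1) = e$ is the unit of $A \cdot e$. Its kernel contains $1-e$; conversely, $ae = 0$ gives $a = ae + a(1-e) = a(1-e) \in (1-e)$, so $\ker \psi = (1-e)$. The induced ring isomorphism $\overline{\psi} \colon A/(1-e) \to A \cdot e$, $\overline{a} \mapsto ae$, is then forced by the formula, and its uniqueness is automatic.

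The main obstacle is equipping $A \cdot e$ with an intrinsic $\mathcal{C}^{\infty}$-ring structure making $\overline{\psi}$ a $\mathcal{C}^{\infty}$-isomorphism, since $A \cdot e$ is not a $\mathcal{C}^{\infty}$-subring of $A$ (it does not contain $1$). I would resolve this by transporting the quotient $\mathcal{C}^{\infty}$-structure along $\overline{\psi}$; equivalently, for every $f \in C^{\infty}(\mathbb{R}^{n}, \mathbb{R})$ set $f_{A \cdot e}(a_{1} e, \ldots, a_{n} e) := f_{A}(a_{1}, \ldots, a_{n}) \cdot e$. Well-definedness is exactly the condition $a_{i} e = b_{i} e \iff a_{i} - b_{i} \in (1-e)$ combined with the fact that $(1-e) \subseteq \ker(q)$ is a $\mathcal{C}^{\infty}$-ideal, and the $\mathcal{C}^{\infty}$-ring axioms are inherited tautologically from the quotient.
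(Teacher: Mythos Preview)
Your proof is correct and follows essentially the same route as the paper: both arguments build the isomorphism $A\{e^{-1}\}\cong A/(1-e)$ by producing maps in each direction from the universal properties of quotient and localization (using that $\eta_e(e)$ is an invertible idempotent, hence $1$), and both obtain $A/(1-e)\cong A\cdot e$ via the multiplication-by-$e$ map. You are in fact more careful than the paper on two points: you verify both inclusions for $\ker\psi=(1-e)$, and you explicitly address how $A\cdot e$ acquires its $\mathcal{C}^{\infty}$-structure (by transport from the quotient), whereas the paper leaves this implicit.
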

\begin{proof}
Let
$$\begin{array}{cccc}
m_e: & A & \twoheadrightarrow & A\cdot e \\
     & a & \mapsto & a \cdot e
\end{array}$$
and
$$\begin{array}{cccc}
    q & A & \twoheadrightarrow & \dfrac{A}{(1-e)} \\
     & a & \mapsto & a + (1-e)
  \end{array}$$

Since $(1-e)\cdot e = e - e^2 = e - e = 0$, given any $a = x(1-e) \in (1-e)$, $m_e(a) = m_e(x(1-e)) = x(1-e)e = x\cdot 0 = 0$, so $(1-e) \subseteq \ker m_e$.\\

Noting that $(1-e) \subseteq \ker m_e$ and that $m_e$ is a surjective map, by the \textbf{Theorem of the Isomorphism} there is a unique isomorphism $\psi: \dfrac{A}{(1-e)} \to A \cdot e$ such that the following triangle commutes:

$$\xymatrix{
A \ar@{->>}[r]^{m_e} \ar[d]^{q} & A \cdot e \\
\dfrac{A}{(1-e)} \ar@{.>}[ur]^{\exists ! \psi}
}$$

so $\dfrac{A}{(1-e)} \cong A \cdot e$.\\

Finally we show that $A\{ e^{-1}\} \cong \dfrac{A}{(1-e)}$. First we note that since $\eta_e(e) \in (A\{ e^{-1}\})^{\times}$  and $(\eta_e(e))^2 = \eta_e(e^2) = \eta_e(e)$ (that is, $\eta_e(e)$ is an idempotent element of $A\{ e^{-1}\}$), it follows that $\eta_e(e) = 1$.\\

We have $\eta_e(1-e)=\eta_e(1) - \eta_e(e) = 1 - 1 = 0$, so $(1-e) \subseteq \ker \eta_a$, so applying the Theorem of the Homomorphism we get a unique $\mathcal{C}^{\infty}-$homomorphism $\varphi: \dfrac{A}{(1-e)} \to A\{ e^{-1}\}$ such that the following diagram commutes:
$$\xymatrix{
A \ar[r]^{\eta_e} \ar[d]^{q} & A\{ e^{-1}\}\\
\dfrac{A}{(1-e)} \ar@{.>}[ur]^{\exists ! \varphi}
}$$

We have also:
$$q(e)-q(1) = q(e-1) \in (e-1),$$
hence
$$q(e) = q(1) \,\, {\rm in}\,\, \dfrac{A}{(1-e)}.$$

Since $(q(e))^2 = q(e^2)=q(e)$, \textit{i.e.}, it is idempotent, it follows that $q(e) = 1 + (e-1) \in \left( \dfrac{A}{(1-e)}\right)^{\times}$. By the universal property of $\eta_e: A \to A\{ e^{-1}\}$, there is a unique $\mathcal{C}^{\infty}-$homomorphism $\psi: A\{ e^{-1}\} \to \dfrac{A}{(1-e)}$ such that the following diagram commutes:
$$\xymatrix{
A \ar[r]^{\eta_e} \ar[d]^{q} & A\{ e^{-1}\} \ar@{.>}[dl]^{\exists ! \psi}\\
\dfrac{A}{(1-e)}
}$$

By the uniqueness of the arrows $\varphi$ and $\psi$, we conclude that $\varphi \circ \psi = {\rm id}_{A\{ e^{-1}\}}$ and $\psi \circ \varphi = {\rm id}_{\frac{A}{(1-e)}}$, hence $A\{ e^{-1}\} \cong \dfrac{A}{(1-e)}$.
\end{proof}

We now give a definition of a von Neumann regular $\mathcal{C}^{\infty}-$ring: it is a $\mathcal{C}^{\infty}-$ring $(A,\Phi)$ such that $\widetilde{U}(A,\Phi)$ is a von Neumann regular commutative unital ring (here $\widetilde{U}: \mathcal{C}^{\infty}{\rm \bf Rng} \to {\rm \bf CRing}$ is the forgetful functor). More precisely:\\

\begin{definition}\label{arnaldo}Let $\mathfrak{A}=(A,\Phi)$ be a $\mathcal{C}^{\infty}-$ring. We say that $\mathfrak{A}$ is a \index{von Neumann regular $\mathcal{C}^{\infty}-$ring}\textbf{von Neumann regular $\mathcal{C}^{\infty}-$ring} if one (and thus all) of the following equivalent\footnote{See for instance \cite{Mariano} for a proof of these equivalences in the  setting of commutative rings.},  conditions is satisfied:
\begin{itemize}
  \item[(i)]{$(\forall a \in A)(\exists x \in A)(a = a^2x)$;}
  \item[(ii)]{Every principal ideal of $A$ is generated by an idempotent element, \textit{i.e.},
  $$(\forall a \in A)(\exists e \in A)(\exists y \in A)(\exists z \in A)((e^2=e)\& (ey=a) \& (az = e))$$}
  \item[(iii)]{$(\forall a \in A)(\exists ! b \in A)((a=a^2b)\& (b = b^2a))$}
\end{itemize}
\end{definition}

We give a proof of the equivalences above in \textbf{Proposition \ref{arnaldo}}.\\

From now on we must write, when there is no danger of confusion, $A$ instead of $\mathfrak{A}$.\\

An homomorphism of von Neumann $\mathcal{C}^{\infty}-$rings, $A$ and $B$ is simply a $\mathcal{C}^{\infty}-$homo\-morphism between these $\mathcal{C}^{\infty}-$rings. We have the following:\\


\begin{definition}We denote by $\mathcal{C}^{\infty}{\rm \bf vNRng}$ the category whose objects are von Neumann-regular $\mathcal{C}^{\infty}-$rings and whose morphisms are the $\mathcal{C}^{\infty}-$homomorphisms between them. Thus, $\mathcal{C}^{\infty}{\rm \bf vNRng}$ is a full subcategory of $\mathcal{C}^{\infty}{\rm \bf Rng}$.
\end{definition}

The following lemma tells us that, in $\mathcal{C}^{\infty}{\rm \bf vNRng}$, taking localizations and taking the ring of fractions with respect to a special element yield  the same object.

\begin{lemma}\label{zeus}If $A$ is a von Neumann regular $\mathcal{C}^{\infty}-$ring, then given any $a \in A$ there is some idempotent element $e \in A$ such that $A\{ a^{-1}\} \cong A\{ e^{-1}\} \cong \dfrac{A}{(1-e)}$.
\end{lemma}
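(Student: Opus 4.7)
The plan is to reduce the problem to Lemma \ref{destino} by producing an idempotent $e$ with $A\{a^{-1}\} \cong A\{e^{-1}\}$. The candidate $e$ comes directly from condition (ii) of \textbf{Definition \ref{arnaldo}}: applied to $a$, it yields an idempotent $e$ together with $y,z \in A$ satisfying $ey = a$ and $az = e$; that is, $(a)$ and $(e)$ are the same principal ideal. Once such an $e$ is fixed, the isomorphism $A\{e^{-1}\} \cong A/(1-e)$ is immediate from \textbf{Lemma \ref{destino}}, so everything reduces to showing $A\{a^{-1}\} \cong A\{e^{-1}\}$.

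To get this last isomorphism I would use the universal property of the $\mathcal{C}^{\infty}$-ring of fractions twice, in opposite directions. First, I would check that $\eta_e(a)$ is invertible in $A\{e^{-1}\}$: from $az = e$ we get $\eta_e(a)\,\eta_e(z) = \eta_e(e) = 1$ (using that $\eta_e(e)=1$, as already observed in the proof of \textbf{Lemma \ref{destino}}). The universal property of $\eta_a\colon A \to A\{a^{-1}\}$ then yields a unique $\mathcal{C}^{\infty}$-homomorphism $\varphi\colon A\{a^{-1}\} \to A\{e^{-1}\}$ with $\varphi \circ \eta_a = \eta_e$.

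Conversely, I would check that $\eta_a(e)$ is invertible in $A\{a^{-1}\}$. Since $ey = a$, we have $\eta_a(e)\,\eta_a(y) = \eta_a(a) \in (A\{a^{-1}\})^{\times}$, so $\eta_a(e)$ divides a unit and is itself a unit. But $\eta_a(e)$ is also idempotent, hence multiplying $\eta_a(e)^2 = \eta_a(e)$ by its inverse forces $\eta_a(e) = 1$. In particular it is invertible, so the universal property of $\eta_e$ delivers a unique $\mathcal{C}^{\infty}$-homomorphism $\psi\colon A\{e^{-1}\} \to A\{a^{-1}\}$ with $\psi \circ \eta_e = \eta_a$.

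Finally I would verify that $\varphi$ and $\psi$ are mutually inverse: the composites $\psi\circ\varphi$ and $\varphi\circ\psi$ both agree with the respective identities when precomposed with $\eta_a$ and $\eta_e$, and by the uniqueness clauses in the two universal properties they must equal the identity maps. The only delicate point is the second direction, where we need to argue that $\eta_a(e)$ is invertible; this is where the \emph{idempotent plus unit implies $1$} trick is essential, and it is the step that genuinely uses that $e$ was produced by the von Neumann regularity condition. Everything else is formal manipulation of universal properties.
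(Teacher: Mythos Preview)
Your proof is correct and follows essentially the same approach as the paper: both arguments pick the idempotent $e$ with $(a)=(e)$ from von Neumann regularity and then verify the relevant universal property, using the ``invertible idempotent equals $1$'' observation at the key step. The only difference is organizational: the paper shows directly that $A/(1-e)$ satisfies the universal property of $A\{a^{-1}\}$, whereas you first establish $A\{a^{-1}\}\cong A\{e^{-1}\}$ and then invoke \textbf{Lemma \ref{destino}}.
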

\begin{proof} Let $a \in A$ be any element. Since $A$ is a von Neumann regular $\mathcal{C}^{\infty}-$ring, there exists some idempotent element $e \in A$ such that $a \in (e)$ and $e \in (a)$, so
$$(a) = (e).$$

We claim that:
$$\dfrac{A}{(1-e)} \cong A\{ a^{-1}\}.$$

First notice that the quotient map $q: A \to \dfrac{A}{(1-e)}$ inverts $a$, i.e., $q(a) \in \left( \dfrac{A}{(1-e)} \right)^{\times}$. Indeed, since $(a) = (e)$, there must exist some $y \in A$ such that $ay = e$, so $1 - ay + (1-e)$ is the multiplicative inverse of $q(a) = a + (1-e)$, hence $q(a) \in \left( \dfrac{A}{(1-e)}\right)^{\times}$.

Given $f: A \to B$ a $\mathcal{C}^{\infty}-$rings homomorphism such that $f(a) \in B^{\times}$, consider the following diagram:
$$\xymatrix{
A \ar[r]^{q} \ar[dr]^{f} & \dfrac{A}{(1-e)}\\
   & B
}$$

\textbf{Claim:} $1-e \in \ker f$.\\

Since $f(a) \in B^{\times}$, $f(a)= f(ae) = f(a)\cdot f(e) \in B^{\times}$, so there is some $b \in B$ such that $f(e) \cdot [f(a) \cdot b] = [f(e)f(a)] \cdot b = 1$. Hence $f(e)^{-1} = f(a)\cdot b$.\\

Note that $f(e) \in B^{\times}$ and $f(e) = f(e^2) = f(e)^2$, that is, $f(e)$ is an invertible idempotent of $A$, hence $f(e) = 1$ and $1 - e \in \ker f$.\\

By the \textbf{Theorem of the Homomorphism}, since $(1-e) \subseteq \ker f$ there is a unique $\mathcal{C}^{\infty}-$homomorphism $\widetilde{f}: \dfrac{A}{(1-e)} \to B$ such that
$$\xymatrix{
A \ar[r]^{q} \ar[dr]^{f} & \dfrac{A}{(1-e)} \ar[d]^{\widetilde{f}}\\
   & B
}$$
commutes. \\

Since $\dfrac{A}{(1-e)}$ has the universal property of the ring of fractions $A\{ a^{-1}\}$, it follows that:
$$\dfrac{A}{(1-e)} \cong A\{ a^{-1}\}$$
\end{proof}

\begin{lemma}Let $A$ be a von Neumann-regular $\mathcal{C}^{\infty}-$ring, $S \subseteq A$ and let $\widetilde{U}: \mathcal{C}^{\infty}{\rm \bf Rng} \rightarrow {\rm \bf CRing}$ be the forgetful functor. Then:
$$\widetilde{U}\left( A\{ S^{-1}\}\right) = \widetilde{U}(A)[S^{-1}]$$
\end{lemma}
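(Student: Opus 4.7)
The plan is to reduce to the single-element case via Lemma \ref{zeus} combined with its classical commutative-algebra counterpart, and then extend to arbitrary $S$ by passing to filtered colimits.

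First, I would handle the case $S = \{a\}$ of a single element. By Lemma \ref{zeus}, there exists an idempotent $e \in A$ with $(a) = (e)$ and $A\{a^{-1}\} \cong A/(1-e)$ as $\mathcal{C}^{\infty}-$rings. Since (by Hadamard's lemma) every ring-theoretic ideal of a $\mathcal{C}^{\infty}-$ring is automatically closed under the $\mathcal{C}^{\infty}-$operations, the underlying commutative ring of the $\mathcal{C}^{\infty}-$quotient $A/(1-e)$ coincides with the ordinary quotient $\widetilde{U}(A)/(1-e)$. A parallel argument in ${\rm \bf CRing}$, using the ordinary universal property of the classical ring of fractions in place of the $\mathcal{C}^{\infty}-$one, shows that $\widetilde{U}(A)[a^{-1}] \cong \widetilde{U}(A)/(1-e)$: indeed, the unit $q: \widetilde{U}(A) \to \widetilde{U}(A)/(1-e)$ inverts $a$ (as $ay = e$ implies $q(a)$ has inverse $q(y) + q(1-e)$), and any ring map out of $\widetilde{U}(A)$ inverting $a$ must send the idempotent $e$ to $1$, hence kills $1-e$. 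Combining these two identifications yields $\widetilde{U}(A\{a^{-1}\}) = \widetilde{U}(A)[a^{-1}]$.

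Next, for a finite subset $S = \{a_{1}, \ldots, a_{n}\}$ I would iterate: the $\mathcal{C}^{\infty}-$ring $A\{a_{1}^{-1}\}$ is a quotient of $A$ by an idempotent-generated ideal, and a direct check shows that such quotients of von Neumann regular $\mathcal{C}^{\infty}-$rings remain von Neumann regular, so Lemma \ref{zeus} applies successively to the images of $a_{2}, \ldots, a_{n}$. By the universal properties one obtains
\begin{equation*}
A\{S^{-1}\} \;\cong\; A\{a_{1}^{-1}\}\{a_{2}^{-1}\}\cdots\{a_{n}^{-1}\},
\end{equation*}
together with the analogous identity in ${\rm \bf CRing}$; applying the single-element case at each step gives the finite case. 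For arbitrary $S \subseteq A$, both $A\{S^{-1}\}$ and $\widetilde{U}(A)[S^{-1}]$ are filtered colimits over the directed poset of finite subsets $F \subseteq S$ of the localizations $A\{F^{-1}\}$ and $\widetilde{U}(A)[F^{-1}]$, respectively; this is immediate from their universal properties. Because $\widetilde{U}$ is a forgetful functor between finitary algebraic categories that preserves underlying sets, it preserves filtered colimits, so the general statement follows from the finite one.

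The main obstacle lies in the first paragraph: one must carefully argue that the $\mathcal{C}^{\infty}-$quotient $A/(1-e)$ and the ring-theoretic quotient $\widetilde{U}(A)/(1-e)$ have the same underlying commutative ring. This rests on the standard but non-trivial fact that every ring-theoretic ideal of a $\mathcal{C}^{\infty}-$ring is automatically a $\mathcal{C}^{\infty}-$ideal (via Hadamard's lemma). Once this identification is secured, the whole proof is merely a matter of matching two universal properties of identical shape, one stated in $\mathcal{C}^{\infty}{\rm \bf Rng}$ and the other in ${\rm \bf CRing}$.
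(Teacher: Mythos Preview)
Your proof is correct and follows essentially the same strategy as the paper: reduce to the single-element case via Lemma~\ref{zeus} by passing through the quotient $A/(1-e)$, then extend to arbitrary $S$ using that $\widetilde{U}$ preserves directed colimits. The only noteworthy difference is in the finite step: the paper observes directly that $A\{S^{-1}\} = A\{a^{-1}\}$ for $a = \prod S$, which avoids your iteration and the accompanying check that intermediate localizations remain von Neumann regular; your route works but is slightly longer.
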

\begin{proof}
We prove the result first in the case $S=\{ a\}$ for some  $a \in A$.\\

Since $A$ is a von Neumann-regular $\mathcal{C}^{\infty}-$ring, by \textbf{Lemma \ref{zeus}}, given  $a \in A$ there is some idempotent element such that:

$$A\{ a^{-1}\} \cong A\{ e^{-1}\} \cong \dfrac{A}{(1-e)}$$

Now, $\dfrac{A}{(1-e)} \cong A[e^{-1}]$, and $A[e^{-1}] \cong \widetilde{U}(A)[e^{-1}]$, and since $\widetilde{U}(A)[e^{-1}] \cong \widetilde{U}(A)[a^{-1}]$, the result follows.

Whenever $S$ is finite, we have $A\{ {S}^{-1}\} = A\{ a^{-1}\}$, for $a = \prod S$, and we can use the proof we have just made.\\

For a general $S \subseteq A$, we write:

$$S = \bigcup_{S' \subseteq_{\rm fin} S} S'$$

and use the fact that $\widetilde{U}: \mathcal{C}^{\infty}{\rm \bf Rng} \rightarrow {\rm \bf CRing}$ preserves directed colimits.
\end{proof}

As a corollary, we have the following:

\begin{proposition}\label{vNRingsClosedUnderLocalizations}
$\mathcal{C}^{\infty}{\rm \bf vNRng} \subseteq \mathcal{C}^{\infty}{\rm \bf Rng}$ is closed under localizations.
\end{proposition}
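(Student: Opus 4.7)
The plan is to reduce the statement to a known fact from classical commutative algebra by invoking the preceding lemma. Since being a von Neumann regular $\mathcal{C}^{\infty}$-ring is, by \textbf{Definition \ref{arnaldo}}, a property of the underlying commutative ring $\widetilde{U}(A,\Phi)$, showing closure under $\mathcal{C}^{\infty}$-localization amounts to showing that $\widetilde{U}(A\{S^{-1}\})$ is a von Neumann regular commutative ring whenever $A$ is a von Neumann regular $\mathcal{C}^{\infty}$-ring and $S \subseteq A$.

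First, I would invoke the preceding lemma to rewrite
$$\widetilde{U}(A\{S^{-1}\}) = \widetilde{U}(A)[S^{-1}],$$
so the question reduces entirely to the classical commutative-ring setting: is the ordinary localization of a von Neumann regular ring again von Neumann regular?

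This classical fact is routine: given $a \in A$ and $s \in S$, choose $x \in A$ with $a = a^2 x$ (condition (i) of \textbf{Definition \ref{arnaldo}}). Then in $\widetilde{U}(A)[S^{-1}]$ one checks directly that
$$\left(\frac{a}{s}\right)^{2} \cdot \frac{s\, x}{1} \;=\; \frac{a^{2} x}{s} \;=\; \frac{a}{s},$$
so $a/s$ satisfies the von Neumann regularity condition witnessed by $sx/1$. This shows $\widetilde{U}(A)[S^{-1}]$ is von Neumann regular, hence by the definition $A\{S^{-1}\}$ lies in $\mathcal{C}^{\infty}{\rm \bf vNRng}$.

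There is no serious obstacle here: the earlier lemma already did the real work of identifying the $\mathcal{C}^{\infty}$-localization with the underlying commutative localization (which depended crucially on von Neumann regularity, via \textbf{Lemma \ref{zeus}}). Once that identification is in hand, the proposition is a one-line corollary of the classical result, and the argument above makes the reduction explicit.
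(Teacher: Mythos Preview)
Your proposal is correct and is precisely the argument the paper has in mind: the proposition is stated there simply ``as a corollary'' of the preceding lemma identifying $\widetilde{U}(A\{S^{-1}\})$ with $\widetilde{U}(A)[S^{-1}]$, with no further proof given. Your write-up fills in exactly the expected details---the reduction to the classical fact that ordinary localizations of von Neumann regular rings are von Neumann regular, together with the one-line verification $(a/s)^2\cdot(sx/1)=a/s$---so there is nothing to add.
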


The following result is an adaptation of \textbf{Proposition 1} of \cite{Mariano} for the $\mathcal{C}^{\infty}-$case.\\

\begin{theorem}\label{backy}If $A$ is a von Neumann regular $\mathcal{C}^{\infty}-$ring then $A$ is a reduced $\mathcal{C}^{\infty}-$ring.
\end{theorem}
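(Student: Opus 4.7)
The plan is to prove that the underlying commutative ring of $A$ has no nonzero nilpotent elements, invoking characterization (i) from \textbf{Definition \ref{arnaldo}}. This is the standard commutative-algebra argument, and it transfers directly to the $\mathcal{C}^\infty$-setting since the property ``$a = a^2x$'' is purely ring-theoretic and the condition of being reduced is a condition on the underlying ring structure.

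Concretely, suppose $a \in A$ is nilpotent, so $a^n = 0$ for some $n \geq 1$. By condition (i) of \textbf{Definition \ref{arnaldo}}, pick $x \in A$ with $a = a^2 x$. I will show by induction on $k \geq 0$ that $a = a^{k+1} x^k$. The base case $k=0$ reads $a = a$. For the inductive step, assume $a = a^{k+1} x^k$; substituting $a = a^2 x$ into the leading factor gives
$$a = a^{k+1} x^k = a \cdot a^k \cdot x^k = (a^2 x) \cdot a^k \cdot x^k = a^{k+2} x^{k+1},$$
completing the induction. Setting $k = n-1$ yields $a = a^n x^{n-1} = 0 \cdot x^{n-1} = 0$. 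Hence the only nilpotent element of $A$ is $0$.

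The only potential obstacle is a subtlety of terminology: in the $\mathcal{C}^\infty$-world one sometimes distinguishes between being ``reduced'' (no ring-theoretic nilpotents) and being ``$\mathcal{C}^\infty$-reduced'' (trivial $\mathcal{C}^\infty$-radical). If the target notion here is the latter, one should further argue that in a von Neumann regular $\mathcal{C}^\infty$-ring these two notions coincide. This follows from the description provided by \textbf{Lemma \ref{zeus}}: every localization $A\{a^{-1}\}$ is already a quotient $A/(1-e)$ by an idempotent, so the $\mathcal{C}^\infty$-operations introduce no ``hidden'' nilpotents beyond the ring-theoretic ones, and the computation above suffices in either reading.
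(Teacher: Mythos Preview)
Your induction argument is clean and correct, but it establishes only that $\sqrt{(0)}=(0)$ in the underlying commutative ring. In this paper the phrase ``reduced $\mathcal{C}^{\infty}$-ring'' means $\sqrt[\infty]{(0)}=(0)$ (see the usage in \textbf{Proposition~\ref{proposition1}}, \textbf{Theorem~\ref{bender}}, and \textbf{Theorem~\ref{ota}}), where $\sqrt[\infty]{(0)}=\{a\in A : A\{a^{-1}\}\cong 0\}$. In general $\sqrt{(0)}\subseteq\sqrt[\infty]{(0)}$, so your nilpotency computation does not by itself yield the required conclusion, and the sentence ``the computation above suffices in either reading'' is not justified.

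You do point to the right tool in your final paragraph, but you stop short of using it. What is needed is exactly the paper's argument: given $a\in\sqrt[\infty]{(0)}$, \textbf{Lemma~\ref{zeus}} gives an idempotent $e$ with $(a)=(e)$ and $A\{a^{-1}\}\cong A/(1-e)$; the hypothesis $A\{a^{-1}\}\cong 0$ then forces $1-e$ to be an invertible idempotent, hence $1-e=1$, $e=0$, and finally $a\in(e)=(0)$. Your heuristic that ``the $\mathcal{C}^{\infty}$-operations introduce no hidden nilpotents'' is the right intuition, but it is precisely this short chain of implications that makes it rigorous. Once you fill this in, your ring-theoretic induction becomes superfluous: the \textbf{Lemma~\ref{zeus}} argument already handles the full $\mathcal{C}^{\infty}$-radical directly, which is why the paper omits any nilpotency computation.
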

\begin{proof}By the \textbf{Lemma \ref{zeus}},
$$\sqrt[\infty]{(0)} = \biggl\{ a \in A | A\{ a^{-1}\} \cong \{ 0\}\biggr\} = \biggl\{ a \in A | \dfrac{A}{(1-e)} \cong \{ 0\}\biggr\}$$

Now $\dfrac{A}{(1-e)} \cong \{ 0\}$ yields $1 \in (1-e)$, so there must exist some $z \in A$ such that $1 = z\cdot (1-e)$, and $(1-e)$ is an invertible idempotent of $A$, so:
$$1 - e = 1$$
$$e = 0$$
Therefore $a = 0$, so $\sqrt[\infty]{(0)} \subseteq \{ 0\}$. The other inclusion is always true, so:
$$\sqrt[\infty]{(0)} \cong \{ 0\},$$
and $A$ is reduced.
\end{proof}

The following result shows us that whenever $A$ is a von Neumann regular $\mathcal{C}^{\infty}-$ring, the notions of $\mathcal{C}^{\infty}-$spectrum, Zariski spectrum, maximal spectrum and thus, the \index{structure sheaf}structure sheaf of its affine scheme coincide.\\

\begin{theorem}\label{bender}Let $A$ be a von Neumann regular $\mathcal{C}^{\infty}-$ring. Then:
\begin{itemize}
  \item[1)]{$\sqrt[\infty]{(0)} = \sqrt{(0)}=(0)$;}
  \item[2)]{${\rm Spec}^{\infty}\,(A) = {\rm Specm}\,(A) = {\rm Spec}\, (A)$, as topological spaces;}
  \item[3)]{The structure sheaf of $A$ in the category $\mathcal{C}^{\infty}{\rm \bf Rng}$ is equal to the structure sheaf of $U(A)$ in the category ${\rm \bf CRing}$.}
\end{itemize}
\end{theorem}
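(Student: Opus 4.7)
For item (1), since \textbf{Theorem \ref{backy}} already yields $\sqrt[\infty]{(0)} = (0)$, it suffices to establish $\sqrt{(0)} = \sqrt[\infty]{(0)}$. The inclusion $\sqrt{(0)} \subseteq \sqrt[\infty]{(0)}$ is a general fact about $\mathcal{C}^{\infty}-$rings: if $a^n = 0$, the image of $a$ in $A\{ a^{-1}\}$ is simultaneously invertible and nilpotent, forcing $A\{ a^{-1}\} \cong \{ 0\}$ and hence $a \in \sqrt[\infty]{(0)}$. Alternatively, iterating the von Neumann regularity identity $a = a^2 x$ gives $a = a^n x^{n-1}$, so $a^n = 0$ implies $a = 0$ directly.

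For item (2), I will split into three equalities and verify the topologies afterwards. First, ${\rm Spec}\,(A) = {\rm Specm}\,(A)$ is the standard result that in any von Neumann regular commutative ring every prime is maximal: given a prime $\mathfrak{p}$ and $a \notin \mathfrak{p}$, the relation $a(1-ax) = 0 \in \mathfrak{p}$ (with $x$ supplied by the von Neumann regularity of $a$) forces $1-ax \in \mathfrak{p}$, so $a$ is invertible modulo $\mathfrak{p}$ and $A/\mathfrak{p}$ is a field. Next, ${\rm Spec}^{\infty}\,(A) = {\rm Spec}\,(A)$: every $\mathcal{C}^{\infty}$-prime is in particular a ring-theoretic prime, while conversely, for any prime $\mathfrak{p}$ the quotient $A/\mathfrak{p}$ is a field, hence a $\mathcal{C}^{\infty}$-field, hence $\mathcal{C}^{\infty}$-reduced, so $\mathfrak{p}$ is $\mathcal{C}^{\infty}$-radical and therefore $\mathcal{C}^{\infty}$-prime. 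The topologies agree because the basic open sets $\{ \mathfrak{p} : a \notin \mathfrak{p}\}$ have the same form on the common underlying set.

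For item (3), I will invoke the lemma preceding \textbf{Proposition \ref{vNRingsClosedUnderLocalizations}}, which asserts $\widetilde{U}(A\{ a^{-1}\}) = \widetilde{U}(A)[a^{-1}]$ for every $a \in A$. The $\mathcal{C}^{\infty}$-structure sheaf on ${\rm Spec}^{\infty}\,(A)$ is determined on basic opens by $D^{\infty}(a) \mapsto A\{ a^{-1}\}$, and the classical structure sheaf on ${\rm Spec}\,(U(A))$ by $D(a) \mapsto \widetilde{U}(A)[a^{-1}]$; the identifications provided by item (2) together with the lemma force the two sheaves to coincide, with restriction maps agreeing by the universal property of localization. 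I expect the main obstacle to be purely bookkeeping: clarifying what ``equal'' means for sheaves whose sections nominally live in different categories, which is resolved by reading item (3) via the forgetful functor $\widetilde{U}: \mathcal{C}^{\infty}{\rm \bf Rng} \to {\rm \bf CRing}$. Once this is settled, every step reduces to combining the preceding lemmas with classical facts about von Neumann regular commutative rings.
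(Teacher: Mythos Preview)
Your proposal is correct and follows essentially the same approach as the paper: item (1) via reducedness of von Neumann regular rings, item (2) via the classical fact that prime equals maximal in such rings combined with the general inclusions ${\rm Specm}\,(A) \subseteq {\rm Spec}^{\infty}\,(A) \subseteq {\rm Spec}\,(A)$, and item (3) via the localization lemma preceding \textbf{Proposition \ref{vNRingsClosedUnderLocalizations}}. Your arguments for (1) (iterating $a = a^2x$ to $a = a^n x^{n-1}$) and for the converse inclusion in (2) (noting $A/\mathfrak{p}$ is a $\mathcal{C}^{\infty}$-field and hence $\mathfrak{p}$ is $\mathcal{C}^{\infty}$-radical) are slightly more direct than the paper's, but the overall structure is the same; the paper simply declares item (3) ``Immediate'' where you spell out the invocation of the lemma.
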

\begin{proof}
Ad 1): By \textbf{Theorem \ref{backy}}, since $A$ is a von Neumann regular $\mathcal{C}^{\infty}-$ring, $\sqrt[\infty]{(0)} \cong (0)$. But $\widetilde{U}(A)$ is also a von Neumann regular ring, so given any $a \in A$ there exists some idempotent element $e \in A$ such that $(a) = (e)$, i.e.,
$$a \in (e)$$
and
$$e \in (a)$$

The former condition implies the existence of some $x \in A$ such that $a = ex$ so
$$ae = e^2x = ex = a$$
and latter condition implies the existence of some $y \in A$ such that $e = ay$, so
$$ ay = e = e^2 = a^2y^2.$$

Now, given $a \in \sqrt{(0)}$, there is some $n \in \mathbb{N}$ such that $a^n = 0$, so
$$0 = a^n = (ae)^n = a^n e^n = a^ne.$$
Since $e^n = a^n y^n = 0$, then $e=0$, so $a = a \cdot 0 = 0$.

Hence, $\sqrt{(0)} = (0) = \sqrt[\infty]{(0)}$.

Ad 2):

\textbf{Claim:} In a von Neumann regular $\mathcal{C}^{\infty}-$ring every prime ideal is a maximal ideal.\\

 Let $\mathfrak{p}$ be a prime ideal in $A$. Given $a + \mathfrak{p} \neq \mathfrak{p}$ in $\dfrac{A}{\mathfrak{p}}$, then $a + \mathfrak{p} \in \left( \dfrac{A}{\mathfrak{p}}\right)^{\times}$.\\

Since $A$ is a von Neumann regular ring, there exists some $b \in A$ such that $aba = a$, so
$$a + \mathfrak{p} = aba + \mathfrak{p}$$
$$a + \mathfrak{p} = (ab + \mathfrak{p})\cdot (a + \mathfrak{p})$$
$$ab + \mathfrak{p} = 1 + \mathfrak{p}$$
so
$$ab = 1.$$

Hence, every non-zero element of $\dfrac{A}{\mathfrak{p}}$ is invertible, so $\dfrac{A}{\mathfrak{p}}$ is a field. Under those circumstances, it follows that $\mathfrak{p}$ is a maximal ideal, so ${\rm Spec}\, (A) = {\rm Specm}\,(A)$.\\

We always have ${\rm Specm}\, (A) \subseteq {\rm Spec}^{\infty}\, (A)$ and ${\rm Spec}^{\infty}\, (A) \subseteq {\rm Spec}\,(A)$, so:
$${\rm Spec}\,(A) \subseteq {\rm Specm}\, (A) \subseteq {\rm Spec}^{\infty}\,(A) \subseteq {\rm Spec}\,(A)$$
and
$${\rm Spec}^{\infty}\,(A) = {\rm Spec}\, (A).$$

We note, also, that both the topological spaces ${\rm Spec}\,(A)$ amd ${\rm Spec}^{\infty}\,(A)$ have the same basic open sets:
$$D^{\infty}(a) = \{ \mathfrak{p} \in {\rm Spec}^{\infty}\,(A) | a \notin \mathfrak{p} \} = \{ \mathfrak{p} \in {\rm Spec}\,(A) | a \notin \mathfrak{p}\} = D(a),$$
hence $${\rm Spec}^{\infty}\,(A) = {\rm Spec}\, (A)$$ as topological spaces.\\

Ad 3). Immediate.
\end{proof}

\begin{proposition}\label{proposition1}Let $A$ be a $\mathcal{C}^{\infty}-$ring. Then the following are equivalent:
\begin{itemize}
  \item[(i)]{$A$ is von Neumann-regular, \textit{i.e.},
  $$(\forall a \in A)(\exists x \in A)(a = a^2x)$$}
  \item[(ii)]{Every principal ideal of $A$ is generated by an idempotent element, \textit{i.e.},
  $$(\forall a \in A)(\exists e \in A)(\exists y \in A)(\exists z \in A)((e^2=e)\wedge(ey=a)\wedge(az=e))$$}
  \item[(iii)]{$(\forall a \in A)(\exists ! b \in A )((a=a^2b)\wedge(b=b^2a))$}
\end{itemize}
Moreover, when $A$ is von Neumann-regular, then $A$ is reduced (\textit{i.e.}, $\sqrt[\infty]{(0)}=(0)$) and for each $a \in A$ the idempotent element $e$ satisfying (ii) and the element $b$ satisfying (iii) are uniquely determined.
\end{proposition}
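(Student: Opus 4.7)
The plan is to prove the cycle (i) $\Rightarrow$ (ii) $\Rightarrow$ (iii) $\Rightarrow$ (i) and then address the ``moreover'' clauses. Since all three conditions are first-order statements in the language of commutative rings, the $\mathcal{C}^{\infty}$-structure of $A$ plays no role here, and the argument is essentially a transcription of the commutative-ring proof referenced in \cite{Mariano}.

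For (i) $\Rightarrow$ (ii), given $a = a^{2}x$ I would set $e := ax$ and verify directly that $e^{2} = a^{2}x^{2} = (a^{2}x)x = ax = e$; taking $y := a$ yields $ey = a^{2}x = a$, and $z := x$ yields $az = e$. For (ii) $\Rightarrow$ (iii), starting from $e^{2} = e$, $ey = a$ and $az = e$, the key preliminary identity is $ae = a$, obtained from $ae = (ey)e = e^{2}y = ey = a$; I would then propose $b := az^{2}$ and check that $a^{2}b = a(az)^{2} = ae^{2} = ae = a$ and $b^{2}a = (az)^{2}(az^{2}) = e \cdot az^{2} = (ea)z^{2} = az^{2} = b$. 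The implication (iii) $\Rightarrow$ (i) is immediate from the existence clause alone.

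For the uniqueness clauses, I would rely on the standard fact that any two idempotents $e_{1}, e_{2}$ with $(e_{1}) = (e_{2})$ must coincide: writing $e_{1} = re_{2}$ and $e_{2} = se_{1}$, the product $e_{1}e_{2}$ equals both $re_{2}^{2} = re_{2} = e_{1}$ and, symmetrically, $e_{2}$. This immediately gives the uniqueness of the idempotent $e$ witnessing (ii). For the uniqueness of $b$ in (iii), I would first observe that $ab$ is idempotent, since $(ab)^{2} = a^{2}b^{2} = (a^{2}b)b = ab$, and that $(ab) = (a)$, since $a = a^{2}b = a(ab)$; the same holds for $b'$, so $ab = ab'$ by the previous step. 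A short chain of substitutions using $b = b^{2}a = b(ab)$, the equality $ab = ab'$, and the relation $a(b')^{2} = b'$ then forces $b = b'$.

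The main obstacle I anticipate is precisely this final chase for the uniqueness of $b$, where three separate identities must be combined in the correct order without circularity. Everything else reduces to a direct computation, and the reducedness assertion $\sqrt[\infty]{(0)} = (0)$ is already established by \textbf{Theorem \ref{backy}}, so it can be cited verbatim with no additional work.
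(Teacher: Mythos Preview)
Your argument is correct and follows essentially the same computational template as the paper for the equivalences (i)--(iii); the only organizational difference is that you run a cycle (i) $\Rightarrow$ (ii) $\Rightarrow$ (iii) $\Rightarrow$ (i), whereas the paper separately proves (i) $\Leftrightarrow$ (ii) and (i) $\Rightarrow$ (iii). The one substantive divergence is in the uniqueness of $b$: the paper first establishes reducedness, then argues that $(b-b')a^{2}=0$ forces $[(b-b')a]^{2}=0$, invokes $\sqrt{(0)}=(0)$ to conclude $(b-b')a=0$, and finishes via $b-b' = (b+b')(b-b')a = 0$. Your route is more direct and does not rely on reducedness at all: you observe that $ab$ and $ab'$ are idempotent generators of the same ideal $(a)$, conclude $ab=ab'$ from the uniqueness of idempotent generators, and then read off $b = b(ab) = b(ab') = (ab')b' = a(b')^{2} = b'$. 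This is a genuine simplification---it decouples the uniqueness of the quasi-inverse from the reducedness claim, whereas the paper's proof entangles them.
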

\begin{proof}
The implication (iii) $\to$ (i) is obvious, so we omit the proof.\\

Ad (i) $\to$ (ii): Let $I = (a)$ be a principal ideal of $A$. By (i), there is $x \in A$ such that $a = a^2x$, so we define $e:=ax$, which is idempotent since $e^2 = (ax)^2 = a^2x^2 = (a^2x)x = ax = e$. By definition, $e = ax \in (a) = I$, so $(e) \subseteq I$, and since  $a = a^2x = (ax)a = ea$ we also have $a \in (e)$, so $I = (a) \subseteq (e)$. Hence, $I = (e)$.\\

Ad (ii) $\to$ (i): Let $a \in A$ be any element. By (ii) there are $e \in A$, $y \in A$ and $z \in A$ such that $e^2 =e$, $a = ey$ and $e = az$. Define $x:= z^2y$, and we have $a^2x = a^2z^2y = e^2y = ey = a$.\\

Ad (i) $\to$ (iii): Let $a \in A$ be any element. By (i), there is some $x \in A$ such that $a= a^2x$. There can be many $x \in A$ satisfying this role, but there is a ``minimal'' one: the element $ax$ is idempotent and we can project any chosen $x$ down with this idempotent, obtaining $b:=ax^2$. Then $aba = aab^2a = (ax)(ax)a = axa = a$ and $bab = (ax^2)a(ax^2) = (ax)^3x = (ax)x = b$.\\

Now suppose that $A$ is a von Neumann-regular $\mathcal{C}^{\infty}-$ring, and let $a \in A$ be such that $a \in \sqrt[\infty]{(0)}$. Then let $e$ be an idempotent such that $ey=a$, $az=e$, for some $y,z \in A$. Then $a$ is such that $A\{ a^{-1}\} \cong \{0\}$, and by \textbf{Lemma \ref{zeus}} there is some idempotent $e \in A$ such that $A\{ a^{-1}\} \cong \dfrac{A}{(1-e)}$. Now, $A\{ a^{-1}\} \cong \{ 0\}$ occurs if and only if, $\dfrac{A}{(1-e)} \cong \{0\}$, \textit{i.e.}, if and only if, $(1-e) = A$. Since $(1-e) = A$, it follows that $1-e \in A^{\times}$, and since $e \cdot (1-e) = 0$, it follows by cancellation that $e = 0$, hence $a = ey = 0y=0$.\\

Let $e,e' \in A$ be idempotents in an arbitrary ring satisfying $(e) = (e')$. Select $r,r'\in A$ such that $er'=e'$ and $e'r = e$. Then $e' = er' = er'e = e'e = e're'=e'r=e$. Thus, if an ideal is generated by an idempotent element, this element is uniquely determined.\\

Finally, let $A$ be a von Neumann-regular $\mathcal{C}^{\infty}-$ring. Select a member $a \in A$ and consider $b,b'\in A$ such that $a^2b'=a=a^2b$, $b=b^2a$, $b'={b'}^2a$. Then $(b-b')a^2=(b-b')a^2=(b-b')(ba^2-b'a^2)=(b-b')(a-a)=(b-b')\cdot 0 = 0$ and $[(b-b')\cdot a]^2 \in (0)$. Since $A$ is $\mathcal{C}^{\infty}-$reduced, $[(b-b')\cdot a]^2 \in (0) = \sqrt[\infty]{(0)}$. By item (1) of \textbf{Theorem \ref{bender}}, $\sqrt[\infty]{(0)} = \sqrt{(0)}$, so $[(b-b')\cdot a]^2 \in \sqrt[\infty]{(0)} = \sqrt{(0)}$ and $(b-b')\cdot a = 0$. Therefore $b - b' = b^2a - {b'}^2a= (b^2-{b'}^2)a = (b+b')(b-b')a = (b+b') \cdot 0 = 0$.
\end{proof}

\begin{remark}\label{agp}Let $A$ be a von Neumann-regular $\mathcal{C}^{\infty}-$ring and $e \in A$ be any idempotent element. Then $A \cdot e$ is a von Neumann-regular $\mathcal{C}^{\infty}-$ring. Indeed, we have $A \cdot e \cong \dfrac{A}{(1-e)}$ and the latter is an homomorphic image of a von Neumann-regular $\mathcal{C}^{\infty}-$ring, namely $\dfrac{A}{(1-e)} = q[A]$. Since $\dfrac{A}{(1-e)}$ is the homomorphic image of a von Neumann-regular $\mathcal{C}^{\infty}-$ring, $\dfrac{A}{(1-e)}$ is a von Neumann-regular $\mathcal{C}^{\infty}-$ring. Since $A \cdot e \cong \dfrac{A}{(1-e)}$, it follows that $A \cdot e$ is a von Neumann-regular $\mathcal{C}^{\infty}-$ring.
\end{remark}

\begin{lemma}\label{moura}Let $A$ be a local $\mathcal{C}^{\infty}-$ring. The only idempotent elements of $A$ are $0$ and $1$.
\end{lemma}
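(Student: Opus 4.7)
The plan is to run the standard commutative-algebra argument at the level of the underlying commutative ring $\widetilde{U}(A)$, which is local by hypothesis with unique maximal ideal $\mathfrak{m}$ consisting of all non-units.

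First, I would observe that any idempotent $e \in A$ satisfies $e(1-e) = e - e^2 = 0$. Then $1-e$ is also idempotent, and the symmetry between $e$ and $1-e$ will be used in what follows.

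Next, I would argue by contradiction: suppose $e \neq 0$ and $e \neq 1$. I claim neither $e$ nor $1-e$ is a unit. Indeed, if $e \in A^{\times}$, then from $e(1-e)=0$ we get $1-e = 0$, i.e.\ $e=1$, contradicting the assumption; symmetrically, if $1-e \in A^{\times}$, then $e = 0$, again a contradiction. Therefore $e, 1-e \in A \setminus A^{\times} = \mathfrak{m}$.

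Finally, the contradiction is immediate: since $\mathfrak{m}$ is an ideal,
$$1 = e + (1-e) \in \mathfrak{m},$$
which is impossible because $\mathfrak{m}$ is proper. Hence $e \in \{0,1\}$. The only point needing acknowledgement is the convention that a \emph{local} $\mathcal{C}^{\infty}$-ring means the underlying ring $\widetilde{U}(A)$ is local in the usual sense (unique maximal ideal, equivalently non-units forming an ideal); granted that, the proof is essentially a one-line argument and I do not anticipate any genuine obstacle beyond pinning down this convention.
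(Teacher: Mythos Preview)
Your proof is correct and essentially the same as the paper's. The paper phrases locality as ``$x+y \in A^{\times} \Rightarrow x \in A^{\times}$ or $y \in A^{\times}$'' and applies it directly to $1 = e + (1-e)$, whereas you use the equivalent formulation that non-units form the maximal ideal and argue by contradiction; the underlying logic (an invertible idempotent equals $1$, and $e+(1-e)=1$) is identical.
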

\begin{proof}
Since $A$ is a local $\mathcal{C}^{\infty}-$ring, we have:
$$(\forall x \in A)(\forall y \in A)((x+y \in A^{\times}) \to (x \in A^{\times})\vee(y \in A^{\times})).$$

Let $e \in A$ be an idempotent. We have $(1 = e + (1-e) \in A^{\times})\to((e \in A^{\times})\vee(1-e \in A^{\times}))$. If $e \in A^{\times}$ then $e=1$ (since $1$ is the only invertible idempotent), and if $1-e \in A^{\times}$ then, by the same reasons, $1-e = 1$, hence $e=0$.
\end{proof}

\begin{proposition}\label{dan}Let $A$ be a von Neumann-regular $\mathcal{C}^{\infty}-$ring whose only idempotent elements are $0$ and $1$. Then the following assertions are equivalent:
\begin{itemize}
  \item[(i)]{$A$ is a $\mathcal{C}^{\infty}-$field;}
  \item[(ii)]{$A$ is a $\mathcal{C}^{\infty}-$domain;}
  \item[(iii)]{$A$ is a local $\mathcal{C}^{\infty}-$ring.}
\end{itemize}
\end{proposition}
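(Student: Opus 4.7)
The plan is to prove the cyclic chain $(i) \Rightarrow (ii) \Rightarrow (iii) \Rightarrow (i)$, with each step being short.

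The implication $(i) \Rightarrow (ii)$ is immediate: the underlying commutative ring of a $\mathcal{C}^{\infty}$-field is a field and hence an integral domain, so $A$ is a $\mathcal{C}^{\infty}$-domain.

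For $(ii) \Rightarrow (iii)$, I would use the form of von Neumann regularity given by condition (i) of Definition \ref{arnaldo}. Given an arbitrary nonzero $a \in A$, choose $x \in A$ with $a = a^{2}x$; rewriting as $a(1-ax)=0$ and using the assumption that $A$ is a $\mathcal{C}^{\infty}$-domain, we get $1-ax=0$, so $a$ is a unit. Hence every nonzero element is invertible, $A$ is a $\mathcal{C}^{\infty}$-field, and in particular $A$ has the unique maximal ideal $(0)$, which makes it local.

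For $(iii) \Rightarrow (i)$, I would invoke the standing hypothesis that the only idempotents of $A$ are $0$ and $1$, combined with the characterization of von Neumann regularity in Proposition \ref{proposition1}(ii): for any $a \in A$ there exists an idempotent $e \in A$ with $(a) = (e)$. By hypothesis $e \in \{0,1\}$, so either $e = 0$ (forcing $a = 0$) or $e = 1$ (forcing $(a)=A$, i.e., $a$ invertible). Thus every element of $A$ is either zero or a unit, making $A$ a $\mathcal{C}^{\infty}$-field.

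There is essentially no obstacle here; the work reduces to choosing the right equivalent formulation of von Neumann regularity at each step. One conceptual point worth flagging is that the argument for $(iii) \Rightarrow (i)$ does not really use hypothesis $(iii)$: the standing idempotent assumption plus von Neumann regularity already forces $A$ to be a $\mathcal{C}^{\infty}$-field, so under the proposition's blanket hypotheses all three conditions are automatically true. The cyclic presentation is chosen mainly for structural clarity rather than logical necessity.
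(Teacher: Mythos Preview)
Your proof is correct and close in spirit to the paper's, but with a slightly different logical organization and one genuinely sharper observation.

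The paper does not argue cyclically: it declares $(i)\Rightarrow(ii)$ and $(i)\Rightarrow(iii)$ immediate, and then proves $(iii)\Rightarrow(i)$ and $(ii)\Rightarrow(i)$ separately. Its $(ii)\Rightarrow(i)$ runs through the idempotent-generator characterization (Proposition~\ref{proposition1}(ii)) rather than your direct factorization $a(1-ax)=0$; both are one-liners, and yours is arguably cleaner since it uses only Definition~\ref{arnaldo}(i). For $(iii)\Rightarrow(i)$ the paper invokes Lemma~\ref{moura} to conclude that a local ring has only the idempotents $0$ and $1$, and then finishes exactly as you do via $(a)=(e)$ with $e\in\{0,1\}$. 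Your version skips Lemma~\ref{moura} and appeals directly to the standing hypothesis on idempotents---which, as you correctly point out, makes hypothesis $(iii)$ itself superfluous. This observation (that under the proposition's blanket assumptions all three conditions are automatically satisfied) is not made explicit in the paper, and it clarifies that the real content here is the equivalence ``von Neumann regular with trivial idempotents $\Leftrightarrow$ $\mathcal{C}^{\infty}$-field''; the proposition as stated is then a corollary. So: same underlying mechanism, but your write-up is tighter and your closing remark adds insight the paper's proof does not surface.
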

\begin{proof}
The implications (i) $\to$ (ii), (i) $\to$ (iii) are immediate, so we omit their proofs. \\

Ad (iii) $\to$ (i): Suppose $A$ is a local $\mathcal{C}^{\infty}-$ring. Since $A$ is a von Neumann-regular $\mathcal{C}^{\infty}-$ring, given any $x \in A\setminus \{ 0\}$ there exists some idempotent element $e \in A$ such that $(x)=(e)$. However, the only idempotent elements of $A$ are, by \textbf{Lemma \ref{moura}}, $0$ and $1$. We claim that $(x)=(1)$, otherwise we would have $(x)=(0)$, so $x=0$.\\

Now, $(x) = (1)$ implies $1 \in (x)$, so there is some $y \in A$ such that $1 = x \cdot y = y \cdot x$, and $x$ is invertible. Thus $A$ is a $\mathcal{C}^{\infty}-$field.\\

Ad (ii) $\to$ (i):Suppose $A$ is a $\mathcal{C}^{\infty}-$domain. Given any $x \in A \setminus \{ 0\}$, we have:
$$(\forall y \in A \setminus \{ 0\})(x\cdot y \neq 0),$$
so $(x)\neq (0)$. Since $A$ is a von Neumann-regular $\mathcal{C}^{\infty}-$ring, $(x)$ is generated by some non-zero idempotent element, namely, $1$. Hence $(x) = (1)$ and $x \in A^{\times}$.
\end{proof}

\begin{proposition}\label{proposition4} The inclusion functor $\imath : \mathcal{C}^{\infty}{\rm \bf vNRng} \hookrightarrow \mathcal{C}^{\infty}{\rm \bf Rng}$ creates filtered colimits, \textit{i.e.}, $\mathcal{C}^{\infty}{\rm \bf vNRng} \subset \mathcal{C}^{\infty}{\rm \bf Rng}$
\end{proposition}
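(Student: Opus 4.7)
The plan is to reduce the ``creates filtered colimits'' assertion to the closure of $\mathcal{C}^{\infty}\mathbf{vNRng}$ under filtered colimits computed in $\mathcal{C}^{\infty}\mathbf{Rng}$. Since $\imath$ is a full subcategory inclusion, once I know that the colimit of $\imath\circ D$ (for a filtered $D:I\to\mathcal{C}^{\infty}\mathbf{vNRng}$) already lies in the essential image of $\imath$, both the existence and uniqueness of a lift to a colimit in $\mathcal{C}^{\infty}\mathbf{vNRng}$ are automatic from fullness and faithfulness; the lifted cocone retains its universal property verbatim. So the work is concentrated in one closure statement.

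The essential step goes as follows. Let $D:I\to\mathcal{C}^{\infty}\mathbf{vNRng}$ be a filtered diagram and let $(A,(\eta_i: D(i)\to A)_{i\in I})$ denote the colimit of $\imath\circ D$ in $\mathcal{C}^{\infty}\mathbf{Rng}$. Because $\mathcal{C}^{\infty}$-rings are the models of an (infinitary) algebraic theory whose operations have \emph{finite} arity, the forgetful functor $\mathcal{C}^{\infty}\mathbf{Rng}\to\mathbf{Set}$ preserves filtered colimits; hence every $a\in A$ can be written as $\eta_i(a_i)$ for some $i\in I$ and $a_i\in D(i)$. Fix such a representative. Since $D(i)$ is von Neumann regular, condition (i) of \textbf{Definition \ref{arnaldo}} produces $x_i\in D(i)$ with $a_i=a_i^{2}x_i$; applying the $\mathcal{C}^{\infty}$-ring homomorphism $\eta_i$ to both sides, and using that it preserves the ordinary ring operations appearing on the right, yields $a=a^{2}\cdot\eta_i(x_i)$. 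Thus $A$ itself satisfies condition (i), so $A\in\mathcal{C}^{\infty}\mathbf{vNRng}$, as required.

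The main obstacle is conceptual rather than computational: I must remember that ``creation'' of colimits is strictly stronger than ``preservation'', so in addition to the closure computation I must see that the unique lift of the cocone is itself colimiting in $\mathcal{C}^{\infty}\mathbf{vNRng}$; fullness of $\imath$ handles this at once. What makes the argument work is the shape of the regularity axiom: it has a single $\forall\exists$ alternation, and the witness $x_i$ can be pushed along a structural map of the filtered system into $A$. No further bookkeeping about the Lawvere-theoretic structure is needed, because $\eta_i$ is already a $\mathcal{C}^{\infty}$-homomorphism by construction of the colimit.
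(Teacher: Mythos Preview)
Your proof is correct and follows essentially the same approach as the paper's: both arguments rely on the fact that filtered colimits in $\mathcal{C}^{\infty}\mathbf{Rng}$ are computed on underlying sets, so every element of the colimit is represented at some stage $i$, and one then pushes a regularity witness from $D(i)$ forward along the colimit map $\eta_i$. The only cosmetic difference is that you use characterization (i) of \textbf{Definition \ref{arnaldo}} (finding $x_i$ with $a_i=a_i^2x_i$), whereas the paper uses characterization (ii) (finding an idempotent $e_i$ with $(a_i)=(e_i)$); your treatment of the ``creation'' aspect via fullness of $\imath$ is more explicit than the paper's.
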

\begin{proof}A filtered colimit of von Neumann-regular $\mathcal{C}^{\infty}-$rings, taken in $\mathcal{C}^{\infty}{\rm \bf Rng}$ is a von Neumann-regular $\mathcal{C}^{\infty}-$ring again. Indeed, filtered colimits in $\mathcal{C}^{\infty}{\rm \bf Rng}$ are formed by taking the colimit of the underlying sets and defining the $n-$ary functional symbol $f^{(n)}$ of an $n-$tuple $(a_1, \cdots, a_n)$ into a common $\mathcal{C}^{\infty}-$ring occurring in the diagram and taking the element $f^{(n)}(a_1, \cdots, a_n)$ there. \\

Let $(I, \leq)$ be a filtered poset and $\{ A_i\}_{i \in I}$ be a filtered family of $\mathcal{C}^{\infty}-$ring. Given any element $\alpha \in \varinjlim A_i$, there is some $i \in I$ and $a_i \in A_i$ such that $\alpha = [(a_i,i)]$. Since $A_i$ is a von Neumann-regular $\mathcal{C}^{\infty}-$ring, there must exist some idempotent $e_i \in A_i$ such that $(a_i)=(e_i)$. It suffices to take $\eta = [(e_i,i)] \in \varprojlim A_i$, which is an idempotent element of $\varinjlim A_i$ such that $(\alpha) = ([(a_i, i)]) = ([(e_i,i)]) = (\eta)$. Hence $\varinjlim A_i$ is a von Neumann-regular $\mathcal{C}^{\infty}-$ring.
\end{proof}

We have the following important result, which relates von Neumann-regular $\mathcal{C}^{\infty}-$rings to the topology of its smooth Zariski spectrum:\\

\begin{theorem}\label{ota} Let $A$ be a $\mathcal{C}^{\infty}-$ring. The following assertions are equivalent:
\begin{itemize}
  \item[(i)]{$A$ is a von Neumann-regular $\mathcal{C}^{\infty}-$ring;}
  \item[(ii)]{$A$ is a $\mathcal{C}^{\infty}-$reduced $\mathcal{C}^{\infty}-$ring (i.e., $\sqrt[\infty]{(0)}=(0)$) and ${\rm Spec}^{\infty}\,(A)$ is a \index{Boolean space}Boolean space, \textit{i.e.},a compact, Hausdorff and totally disconnected space.}
\end{itemize}
\end{theorem}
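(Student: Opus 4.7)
The plan splits the proof into the two implications, and it is the direction $(ii)\Rightarrow(i)$ that requires real work.

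For $(i)\Rightarrow(ii)$, I would just assemble pieces that are already in the paper. Reducedness is \textbf{Theorem \ref{backy}}, and by \textbf{Theorem \ref{bender}} the smooth Zariski spectrum coincides with the classical one, so it suffices to check that $\mathrm{Spec}\,(A)$ is Boolean. Compactness is automatic for affine spectra. For each $a\in A$, by von Neumann-regularity and \textbf{Proposition \ref{proposition1}}(ii) there is an idempotent $e$ with $(a)=(e)$, so $D(a)=D(e)=V(1-e)$ is clopen; this yields a clopen basis, hence total disconnectedness. For Hausdorffness, given distinct primes $\mathfrak{p}\neq\mathfrak{q}$, pick $a\in\mathfrak{p}\setminus\mathfrak{q}$ (or vice versa) and the associated idempotent $e$: the two primes lie in the complementary clopens $V(e)$ and $D(e)$.

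For $(ii)\Rightarrow(i)$, fix $a\in A$ and aim to produce $x\in A$ with $a=a^2x$. First I would show that $D^{\infty}(a)$ is clopen: the canonical homeomorphism $D^{\infty}(a)\cong\mathrm{Spec}^{\infty}(A\{a^{-1}\})$ makes it quasi-compact, and in a compact Hausdorff space every quasi-compact open is clopen, so $V^{\infty}(a)$ is also clopen. Since $V^{\infty}(a)$ is open and compact, I can extract a \emph{finite} cover by basic opens
\[
V^{\infty}(a)=D^{\infty}(b_1)\cup\cdots\cup D^{\infty}(b_n).
\]
Each inclusion $D^{\infty}(b_i)\subseteq V^{\infty}(a)$ says $D^{\infty}(ab_i)=\emptyset$, i.e.\ $ab_i\in\sqrt[\infty]{(0)}=(0)$ by the reducedness hypothesis, so $ab_i=0$ for every $i$.

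Next I would set $b:=b_1^{2}+\cdots+b_n^{2}$. The key point is that every $\mathcal{C}^{\infty}$-residue field $A/\mathfrak{p}$ (for $\mathfrak{p}\in\mathrm{Spec}^{\infty}(A)$) is formally real, whence $\sum b_i^{2}\in\mathfrak{p}$ iff every $b_i\in\mathfrak{p}$. Therefore
\[
V^{\infty}(b)=\bigcap_{i}V^{\infty}(b_i)=X\setminus\bigcup_{i}D^{\infty}(b_i)=X\setminus V^{\infty}(a)=D^{\infty}(a),
\]
so $V^{\infty}(a)\cap V^{\infty}(b)=V^{\infty}(a)\cap D^{\infty}(a)=\emptyset$, i.e.\ $\mathrm{Spec}^{\infty}(A/(a,b))=\emptyset$, forcing $(a,b)=A$. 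Also $ab=\sum ab_i^{2}=\sum(ab_i)b_i=0$. Writing $1=xa+yb$ and multiplying by $a$ gives $a=xa^{2}+y(ab)=xa^{2}$, which is condition (i) of \textbf{Definition \ref{arnaldo}}.

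The step I expect to be the main obstacle, and the one I would be most careful about, is the jump from $\mathrm{Spec}^{\infty}(A/(a,b))=\emptyset$ to $(a,b)=A$: this requires the (standard but non-trivial) fact that a $\mathcal{C}^{\infty}$-ring with empty smooth Zariski spectrum is the zero ring. If that black box is not available, I would instead argue at the level of smooth radicals, proving that $\sqrt[\infty]{(a,b)}=A$ and then using the concrete form $1=xa+yb$ directly via the reducedness assumption. The second subtle point is the identification $V^{\infty}(\sum b_i^{2})=\bigcap_{i}V^{\infty}(b_i)$, which is where the $\mathcal{C}^{\infty}$-structure (formally real residue fields) enters essentially and cannot be replaced by bare commutative algebra.
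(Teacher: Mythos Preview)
Your proposal is correct and follows essentially the same route as the paper. For $(i)\Rightarrow(ii)$ both arguments use the idempotent generator of $(a)$ to exhibit a clopen basis; for $(ii)\Rightarrow(i)$ both cover the clopen complement of $D^{\infty}(a)$ by finitely many basic opens, pass to a single element via a sum of squares (the paper cites this as \textbf{Lemma 1.4(iii)} of \cite{rings2}, exactly your ``formally real residue fields'' point), use reducedness to kill the cross term, and conclude. The only cosmetic difference is in the endgame: you deduce $(a,b)=A$ from ${\rm Spec}^{\infty}(A/(a,b))=\varnothing$ and write $1=xa+yb$, whereas the paper deduces from $D^{\infty}(a^{2}+d^{2})={\rm Spec}^{\infty}(A)$ that $a^{2}+d^{2}$ is a unit (via \textbf{Lemma 1.4(i)} of \cite{rings2}) and writes $y(a^{2}+d^{2})=1$; both reductions rest on the same fact that maximal ideals lie in ${\rm Spec}^{\infty}(A)$, which dissolves the ``main obstacle'' you flagged.
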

\begin{proof}
Ad $(i) \to (ii)$:   Suppose $A$ is a von Neumann regular $\mathcal{C}^{\infty}-$ring. We are going to show that $\sqrt[\infty]{(0)} = (0)$.\\

Given $a \in \sqrt[\infty]{(0)}$, since $A$ is a von Neumann regular $\mathcal{C}^{\infty}-$ring, there are some $e \in A$ such that $e^2 =e$, some $x \in A$ and some $y \in A$ such that $a = ex$ and $e = ay$.\\

Since $a \in \sqrt[\infty]{(0)}$ and $\sqrt[\infty]{(0)}$ is an ideal, we have $e = ay \in \sqrt[\infty]{(0)}$. From \textbf{Theorem 18}, p. 78 of \cite{BM2}, we conclude that:
$$(\forall \p \in {\rm Spec}^{\infty}\,(A))(e \in \p).$$

Now, $e^2 = e$ implies that $(1-e)\cdot e = 0$, so either $e=0$ or $e=1$. The latter does not occur, since $\p$ is a proper prime ideal, hence $e=0$ and $a =0$. Hence $\sqrt[\infty]{(0)} \subseteq (0)$ and $\sqrt[\infty]{(0)}=(0)$.\\

It remains to show that ${\rm Spec}^{\infty}\,(A)$ is a Boolean space. Since ${\rm Spec}^{\infty}\,(A)$ is a spectral space, we only need to show that:
$$\mathcal{B} = \{ D^{\infty}(a) | a \in A \}$$
is a clopen basis for its topology.\\

Given any $a \in A$, since $A$ is a von Neumann regular $\mathcal{C}^{\infty}-$ring, there is some idempotent element $e \in A$ such that $(a)=(e)$, so $D^{\infty}(a) = D^{\infty}(e)$. \\

We claim that ${\rm Spec}^{\infty}\,(A) \setminus D^{\infty}(e) = D^{\infty}(1-e)$, hence $D^{\infty}(a) = D^{\infty}(e)$ is a clopen set.\\

From item (iii) of \textbf{Lemma 1.2} of \cite{rings2},
$$D^{\infty}(e) \cap D^{\infty}(1-e) = D^{\infty}(e\cdot(1-e)) = D^{\infty}(0) = \{ \p \in {\rm Spec}^{\infty}\,(A) | 0 \notin \p \} = \varnothing .$$
Also by item (iii) of \textbf{Lemma 1.2} of \cite{rings2},
$$D^{\infty}(e) \cup D^{\infty}(1-e) = D^{\infty}(e + (1-e)) = D^{\infty}(1) = {\rm Spec}^{\infty}\,(A)$$
so it follows that ${\rm Spec}^{\infty}\,(A)$ is a Boolean space and $A$ is a $\mathcal{C}^{\infty}-$reduced $\mathcal{C}^{\infty}-$ring.\\

Ad $(ii) \to (i)$. Suppose that $A$ is a $\mathcal{C}^{\infty}-$reduced $\mathcal{C}^{\infty}-$ring and that ${\rm Spec}^{\infty}\,(A)$ is a Boolean space.\\

Since ${\rm Spec}^{\infty}\,(A)$ is a Boolean space, it is a Hausdorff space and for every $a \in A$, $D^{\infty}(a)$ is compact, hence it is closed. From that we conclude that for every $a \in A$, $D^{\infty}(a)$ is a clopen set. \\

We conclude that ${\rm Spec}^{\infty}\,(A) \setminus D^{\infty}(a)$ is a clopen subset of ${\rm Spec}^{\infty}\,(A)$.\\

We claim that for every clopen $C$ in ${\rm Spec}^{\infty}\,(A)$ there is some $b \in A$ such that $C = D^{\infty}(b)$.\\

Since $C$ is clopen in ${\rm Spec}^{\infty}\,(A)$, it is in particular an open set, and since $\{ D^{\infty}(a) | a \in A\}$ is a basis for the topology of ${\rm Spec}^{\infty}\,(A)$,  there is a family $\{ b_i\}_{i \in I}$ of elements of $A$ such that:
$$C = \bigcup_{i \in I} D^{\infty}(b_i)$$

Since $C$ is compact, there is a finite subset $I' \subseteq I$ such that:
$$C = \bigcup_{i \in I'} D^{\infty}(b_i)$$

By item (iii) of \textbf{Lemma 1.4} of \cite{rings2},
$$\bigcup_{i \in I'} D^{\infty}(b_i) = D^{\infty}\left( \sum_{i \in I'} {b_i}^2 \right).$$

Hence, given a clopen set $C$, there is an element $b = \sum_{i \in I'} {b_i}^2$ such that $C = D^{\infty}(b)$.\\

Applying the above result to the clopen ${\rm Spec}^{\infty}\,(A) \setminus D^{\infty}(a)$, we obtain an element $d \in A$ such that:
$${\rm Spec}^{\infty}\,(A) \setminus D^{\infty}(a) = D^{\infty}(d).$$

We have:
$$\varnothing = D^{\infty}(a) \cap D^{\infty}(d) = D^{\infty}(a\cdot d) = \{ \p \in {\rm Spec}^{\infty}\,(A) | a \cdot d \notin \p \}$$
so
$$(\forall \p \in {\rm Spec}^{\infty}\,(A))(a \cdot d \in \p)$$
hence
$$a \cdot d \in \bigcap {\rm Spec}^{\infty}\,(A) = \sqrt[\infty]{(0)} = (0)$$
where the last equality is due to the fact that $A$ is supposed to be a $\mathcal{C}^{\infty}-$reduced $\mathcal{C}^{\infty}-$ring.\\

We have, then,
$$a \cdot d = 0.$$

We have, also:
$$D^{\infty}(a^2 + d^2) = D^{\infty}(a) \cup D^{\infty}(b) = {\rm Spec}^{\infty}\,(A) = D^{\infty}(1).$$

By item (i) of \textbf{Lemma 1.4} of \cite{rings2}, $D^{\infty}(1) \subseteq D^{\infty}(a^2+d^2)$ implies $1 \in \{ a^2 + d^2\}^{\infty-{\rm sat}}$, which happens if, and only if, $a^2 + d^2 \in \sqrt[\infty]{(1)}$.\\

Now,  $a^2 + d^2 \in \sqrt[\infty]{(1)}$ occurs if, and only if, $(\exists c \in \{ 1\}^{\infty-{\rm sat}})$ and $(\exists \lambda \in A)$ such that:
$$\lambda \cdot c \cdot (a^2 + d^2) \in \langle 1 \rangle = \{ 1\}$$
Let $y = \lambda \cdot c$, and we get:
$$y \cdot (a^2 + d^2) = 1$$
$$ya^2 + yd^2 = 1$$
Since $a \cdot d = 0$, we get:
$$a(a^2y)+a(b^2y)= a \cdot 1 = a$$
$$a^2 (a \cdot y) = a$$

and considering the forgetful functor:
$$\widetilde{U}: \mathcal{C}^{\infty}{\rm \bf Rng} \to \, {\rm \bf CRing}$$

Let $a \in \sqrt[\infty]{(0)}$, i.e., $a \in A$ is such that $\left( \dfrac{A}{(0)}\right)\{ (a + (0))^{-1} \} \cong A\{ a^{-1}\} \cong 0$. Since $A\{ a^{-1}\} \cong \dfrac{A\{ x\}}{(ax-1)}$, $A\{ a^{-1}\} \cong 0$ implies
$$1_{\frac{A\{ x\}}{(ax - 1)}} = 0_{\frac{A\{ x\}}{(ax - 1)}},$$
so
$$1_{A\{ x\}} \in (ax-1),$$
which occurs if and only if, there exists some $h \in A\{ x\} \cong A \otimes_{\infty} \mathcal{C}^{\infty}(\R)$ such that $1 = h(x) \cdot [\Psi(\jmath(a)) \cdot \Psi(\jmath'({\rm id}_{\R})) - 1] = h(x) \cdot (ax -1)$, where $\jmath : A \to A\{ x\}$ and $\jmath': \mathcal{C}^{\infty}(\R) \to A\{ x\}$ are the coproduct morphisms and $\Psi: A \otimes_{\infty} \mathcal{C}^{\infty}(\R) \to A\{ x\}$ is the isomorphism between them, as we see in the following diagram:
$$\xymatrix{
\mathcal{C}^{\infty}(\R) \ar[dr]^{\jmath'} &  & \\
       & A \otimes_{\infty}\mathcal{C}^{\infty}(\R) \ar[r]^{\Psi} & A\{ x\} \\
A \ar[ur]^{\jmath} & &
}$$

Under those circumstances we conclude that if $A$ is a reduced $\mathcal{C}^{\infty}-$ring, then there is $h \in A\{ x\}$ such that $1 = h(x) \cdot (ax-1)$, so $1 \in (ax - 1)$ and
$$1_{\dfrac{A\{ x\}}{(ax - 1)}} = 0_{\dfrac{A\{ x\}}{(ax - 1)}},$$
or
$$1_{A\{ a^{-1}\}} = 0_{A\{ a^{-1}\}}$$
so
$$ a \in \sqrt[\infty]{(0)}$$
\end{proof}

The following proposition will be useful to characterize the von Neumann-regular $\mathcal{C}^{\infty}-$rings by means of the ring of global sections of the structure sheaf of its affine scheme.\\

\begin{proposition}\label{Phil} If a $\mathcal{C}^{\infty}-$ring $A$ is a von-Neumann-regular $\mathcal{C}^{\infty}-$ring and $\p \in {\rm Spec}^{\infty}\,(A)$, then $A\{ {A \setminus \p}^{-1}\}$ is a $\mathcal{C}^{\infty}-$field.
\end{proposition}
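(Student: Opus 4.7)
The plan is to chain together three previously established results: closure of von Neumann-regular $\mathcal{C}^{\infty}$-rings under localizations, the classification of idempotents in a local $\mathcal{C}^{\infty}$-ring, and the equivalence ``local + vNR $\Rightarrow$ $\mathcal{C}^{\infty}$-field'' for rings whose only idempotents are $0$ and $1$.

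First I would denote $B := A\{(A \setminus \mathfrak{p})^{-1}\}$ and record that this is, by construction, a local $\mathcal{C}^{\infty}$-ring: its unique maximal ideal is the extension of $\mathfrak{p}$, since localizing a $\mathcal{C}^{\infty}$-ring at the complement of a smooth prime ideal yields a local $\mathcal{C}^{\infty}$-ring (this is the standard property of $\mathcal{C}^{\infty}$-localization at a prime, used throughout the ``smooth commutative algebra'' reference).

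Next, I would invoke \textbf{Proposition \ref{vNRingsClosedUnderLocalizations}}: since $A$ is von Neumann-regular and $\mathcal{C}^{\infty}\mathbf{vNRng} \subseteq \mathcal{C}^{\infty}\mathbf{Rng}$ is closed under localizations, $B$ is itself a von Neumann-regular $\mathcal{C}^{\infty}$-ring. Then \textbf{Lemma \ref{moura}} applies to the local $\mathcal{C}^{\infty}$-ring $B$, telling us that the only idempotent elements of $B$ are $0$ and $1$.

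Finally, having shown that $B$ is a von Neumann-regular $\mathcal{C}^{\infty}$-ring with only trivial idempotents and that $B$ is local, I would apply the implication (iii) $\to$ (i) of \textbf{Proposition \ref{dan}} to conclude that $B$ is a $\mathcal{C}^{\infty}$-field, which is precisely the statement. The only genuinely nontrivial ingredient is the verification that $A\{(A\setminus\mathfrak{p})^{-1}\}$ is local; but this is built into the definition of the smooth Zariski spectrum and has already been used implicitly in \textbf{Theorem \ref{bender}}, so no additional work is needed here. The entire argument is therefore a short assembly of the preceding machinery, with no essential obstacle beyond a careful citation of the four results above.
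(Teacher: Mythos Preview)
Your argument is correct, but it proceeds along a genuinely different line from the paper. The paper gives a direct, element-level computation: it shows that the unique maximal ideal $\mathfrak{m}_{\mathfrak{p}}$ of $A\{(A\setminus\mathfrak{p})^{-1}\}$ is zero by taking $a\in\mathfrak{p}$ with $\eta_{\mathfrak{p}}(a)\neq 0$, passing to the idempotent generator $e$ of $(a)$, and deriving a contradiction from $e(1-e)=0$ together with $1-e\in A\setminus\mathfrak{p}$. Your route instead assembles three structural facts already on the shelf: closure of the von Neumann-regular class under localizations (\textbf{Proposition~\ref{vNRingsClosedUnderLocalizations}}), triviality of idempotents in a local $\mathcal{C}^{\infty}$-ring (\textbf{Lemma~\ref{moura}}), and the implication local $\Rightarrow$ field for vNR rings with trivial idempotents (\textbf{Proposition~\ref{dan}}(iii)$\to$(i)). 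There is no circularity, since all three are established prior to and independently of \textbf{Proposition~\ref{Phil}}. Your proof is shorter and more conceptual; the paper's has the virtue of being self-contained and of displaying concretely \emph{why} the maximal ideal collapses. It is worth noting that the paper in fact records your key implication ``local $+$ vNR $\Rightarrow$ field'' a second time, as \textbf{Corollary~\ref{fischer}}, but there it is deduced \emph{from} \textbf{Proposition~\ref{Phil}} via \textbf{Corollary~\ref{pablo}}---so the two arguments run in opposite directions around the same circle of ideas.
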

\begin{proof}
We are going to show that the only maximal ideal of $A\{ {A \setminus \p}^{-1}\}$, $\m_{\p}$ is such that $\m_{\p} \cong \{ 0\}$.\\

Let $\eta_{\p}: A \to A\{ {A \setminus \p}^{-1}\}$ be the localization morphism of $A$ with respect to $A \setminus \p$. We have $\m_{\p} = \langle \eta_{\p}[A \setminus \p]\rangle = \left\{ \dfrac{\eta_{\p}(a)}{\eta_{\p}(b)} | (a \in \p)\wedge(b \in A \setminus \p)  \right\}$. We must show that for every $a \in \p$, $\eta_{\p}(a) = 0$, which is equivalent, by \textbf{Theorem 1.4} of \cite{rings2}, to assert that for every $a \in \p$ there is some $c \in (A \setminus \p)^{\infty-{\rm sat}} = A\setminus \p$  such that $c \cdot a = 0$ in $A$.\\

\textit{Ab absurdo}, suppose $\m_{\p} \neq \{ 0\}$, so there is $a \in \p$ such that $\eta_{\p}(a)\neq 0$, i.e., such that for every $c \in A \setminus \p$, $c \cdot a \neq 0$. Since $A$ is a von Neumann-regular $\mathcal{C}^{\infty}-$ring, for this $a$ there is some idempotent $e \in \p$ such that $(a) = (e)$.\\

Since $a \in (e)$, there is some $\lambda \in A$ such that $a = \lambda \cdot a$, hence:
$$0 \neq \eta_{\p}(a) = \eta_{\p}(\lambda \cdot e) = \eta_{\p}(\lambda) \cdot \eta_{\p}(e)$$
and $\eta_{\p}(e) \neq 0$.\\

Since $\eta_{\p}(e) \neq 0$,
\begin{equation}\label{eq1}
(\forall d \in A\setminus \p)(d \cdot e \neq 0).
\end{equation}

Since $e$ is an idempotent element, $1-e \notin \p$, for if $1-e \in \p$ then $e + (1 - e) = 1 \in \p$ and $\p$ would not be a proper prime ideal.\\

We have also:
\begin{equation}\label{eq2}
e \cdot (1 - e) = 0,
\end{equation}

The equation \eqref{eq2} contradicts \eqref{eq1}, so $\m_{\p} \cong \{ 0\}$ and $A\{{A \setminus \p}^{-1}\}$ is a $\mathcal{C}^{\infty}-$field.
\end{proof}

\begin{corollary}\label{pablo} Let $A$ be a von Neumann-regular $\mathcal{C}^{\infty}-$ring and $\p \in {\rm Spec}^{\infty}\,(A)$. We have:
$$\dfrac{A}{\p} \cong A\{ {A \setminus \p}^{-1}\}.$$
\end{corollary}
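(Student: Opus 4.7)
The plan is to build an explicit isomorphism out of the localization morphism $\eta_\p : A \to A\{(A\setminus\p)^{-1}\}$ and then invoke the first isomorphism theorem. The key inputs are \textbf{Proposition \ref{Phil}} (the localization is a $\mathcal{C}^\infty$-field) and \textbf{Theorem \ref{bender}} (every prime is maximal in $A$), together with the von Neumann regular structure that produces idempotents representing principal ideals.

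First I would identify $\ker \eta_\p$ with $\p$. The inclusion $\p \subseteq \ker \eta_\p$ is precisely the conclusion extracted in the proof of \textbf{Proposition \ref{Phil}}: every $a \in \p$ maps to $0$ because $\m_\p \cong (0)$ in the codomain. For the reverse inclusion, if $a \notin \p$ then $a \in A\setminus \p$, so $\eta_\p(a)$ is by construction a unit in $A\{(A\setminus\p)^{-1}\}$ and hence nonzero; thus $\ker \eta_\p \subseteq \p$.

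Next I would argue surjectivity of $\eta_\p$. An arbitrary element has the form $\eta_\p(a)\cdot \eta_\p(b)^{-1}$ with $b \in A\setminus\p$. Since $A$ is von Neumann-regular, pick an idempotent $e$ with $(b)=(e)$, together with $z,y \in A$ such that $bz=e$ and $ey=b$. From $e(1-e)=0$ and primality of $\p$, either $e\in\p$ or $1-e\in\p$; since $b\in (e)$ and $b\notin\p$, it must be that $1-e \in \p$, so $\eta_\p(1-e)=0$ and therefore $\eta_\p(e)=1$. Then $\eta_\p(b)\cdot\eta_\p(z)=\eta_\p(bz)=\eta_\p(e)=1$, so $\eta_\p(b)^{-1}=\eta_\p(z)$. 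Consequently
$$\eta_\p(a)\cdot\eta_\p(b)^{-1}=\eta_\p(a)\cdot\eta_\p(z)=\eta_\p(az),$$
which lies in the image of $\eta_\p$. Hence $\eta_\p$ is surjective.

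Finally, the factorization of $\eta_\p$ through $A/\p$ yields a $\mathcal{C}^\infty$-homomorphism $\overline{\eta_\p}: A/\p \to A\{(A\setminus\p)^{-1}\}$, which by the two steps above is both injective and surjective, hence an isomorphism. The only point that requires real care is the surjectivity step: one needs to exhibit a preimage of each $\eta_\p(b)^{-1}$ in $A$, and this is exactly where von Neumann regularity (the existence of the idempotent $e$ with $(b)=(e)$ and the explicit witness $z$ with $bz=e$) is essential. Once this is in hand, the rest is a routine application of the first isomorphism theorem, using that $\ker \eta_\p = \p$ by the previous identification.
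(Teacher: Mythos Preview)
Your proof is correct and takes a genuinely different route from the paper's. The paper argues indirectly through a chain of isomorphisms: since $\p$ is maximal (\textbf{Theorem \ref{bender}}), $A/\p$ is already a $\mathcal{C}^{\infty}$-field, so localizing it at its nonzero elements does nothing; since $\m_\p = (0)$ (\textbf{Proposition \ref{Phil}}), quotienting $A\{(A\setminus\p)^{-1}\}$ by $\m_\p$ does nothing; and an external result (Theorem 24 of \cite{BM2}) identifies the residue field $A\{(A\setminus\p)^{-1}\}/\m_\p$ with the field of fractions of $A/\p$. Your approach instead works directly with the map $\eta_\p$, pinning down its kernel and image explicitly via the idempotent witnesses supplied by von Neumann regularity, and then applies the first isomorphism theorem. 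The advantage of your argument is that it is self-contained and avoids the external citation; the paper's chain of isomorphisms, on the other hand, makes the conceptual picture (both sides are the residue field at $\p$) more transparent. One small point worth making explicit in your write-up: the claim that every element of $A\{(A\setminus\p)^{-1}\}$ has the form $\eta_\p(a)\cdot\eta_\p(b)^{-1}$ uses that for von Neumann regular $\mathcal{C}^{\infty}$-rings the smooth localization coincides with the ordinary ring-theoretic one, which is exactly the lemma preceding \textbf{Proposition \ref{vNRingsClosedUnderLocalizations}}.
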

\begin{proof}Since $A$ is a von Neumann-regular $\mathcal{C}^{\infty}-$ring, ${\rm Spec}^{\infty}\,(A) = {\rm Specm}^{\infty}\,(A)$ and for every $\p \in {\rm Spec}^{\infty}\,(A) = {\rm Specm}^{\infty}\,(A)$, $\dfrac{A}{\p}$ is a $\mathcal{C}^{\infty}-$field, so
$$\left( \dfrac{A}{\p}\right)\left\{ \left( \dfrac{A}{\p} \setminus \{ 0 + \p\}\right)^{-1}\right\} \cong \dfrac{A}{\p}$$ Also, since $A\{ {A \setminus \p}^{-1}\}$ is a $\mathcal{C}^{\infty}-$field (by \textbf{Proposition \ref{Phil}}), the quotient map $q_{\m_{\p}}: A\{ {A \setminus \p}^{-1}\} \to \dfrac{A\{ {A \setminus \p}^{-1}\}}{\m_{\p}}$ is an isomorphism, so $A\{ {A \setminus \p}^{-1}\} \cong \dfrac{A\{ {A \setminus \p}^{-1}\}}{\m_{\p}}$.\\

By \textbf{Theorem 24} of p.96 of \cite{BM2}, $\left( \dfrac{A}{\p}\right)\left\{ \left( \dfrac{A}{\p} \setminus \{ 0 + \p\}\right)^{-1}\right\} \cong \dfrac{A\{ {A \setminus \p}^{-1} \}}{\m_{\p}}$, hence:
$$A\{ {A \setminus \p}^{-1}\} \cong \dfrac{A\{ {A \setminus \p}^{-1} \}}{\m_{\p}} \cong \left( \dfrac{A}{\p}\right)\left\{ \left( \dfrac{A}{\p} \setminus \{ 0 + \p\}\right)^{-1}\right\} \cong \dfrac{A}{\p}.$$
\end{proof}

As a consequence, we register another proof of $(iii)  \rightarrow (i)$ of Proposition \ref{dan}.

\begin{corollary}\label{fischer}Let $\mathfrak{A}=(A,\Phi)$ be a local von Neumann-regular $\mathcal{C}^{\infty}-$ring. Then $\mathfrak{A}$ is a $\mathcal{C}^{\infty}-$field.
\end{corollary}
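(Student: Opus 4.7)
The plan is to exploit Corollary \ref{pablo} at the unique maximal ideal of the local ring, together with the fact that localizing a ring at a set already contained in its group of units changes nothing.

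First I would fix notation: let $\mathfrak{m}$ denote the unique maximal ideal of $\mathfrak{A}$. Since $\mathfrak{A}$ is von Neumann-regular, \textbf{Theorem \ref{bender}} gives $\mathrm{Spec}^{\infty}(A) = \mathrm{Spec}(A) = \mathrm{Specm}(A)$, so $\mathfrak{m} \in \mathrm{Spec}^{\infty}(A)$ and \textbf{Corollary \ref{pablo}} applies to it, yielding
$$\dfrac{A}{\mathfrak{m}} \cong A\{(A\setminus \mathfrak{m})^{-1}\}.$$

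Next I would use locality: because $\mathfrak{A}$ is a local $\mathcal{C}^{\infty}$-ring with maximal ideal $\mathfrak{m}$, every element of $A \setminus \mathfrak{m}$ is already invertible, i.e.\ $A\setminus \mathfrak{m} \subseteq A^{\times}$. Then the identity map $\mathrm{id}_A : A \to A$ trivially satisfies the universal property of the localization $\eta : A \to A\{(A\setminus \mathfrak{m})^{-1}\}$, since any $\mathcal{C}^{\infty}$-homomorphism $f : A \to B$ automatically inverts the elements of $A\setminus\mathfrak{m}$. By uniqueness, $A\{(A\setminus \mathfrak{m})^{-1}\} \cong A$.

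Combining the two isomorphisms gives $A \cong A/\mathfrak{m}$, so $\mathfrak{m} = (0)$, and every nonzero element of $A$ is invertible. Hence $\mathfrak{A}$ is a $\mathcal{C}^{\infty}$-field. The only subtlety to check carefully is the universal-property argument that $A\{(A\setminus \mathfrak{m})^{-1}\} \cong A$ when $A\setminus \mathfrak{m}\subseteq A^{\times}$; this is the only step not directly quoted from a previous result, but it is essentially immediate. The rest is an assembly of \textbf{Theorem \ref{bender}} and \textbf{Corollary \ref{pablo}}, which is exactly why the authors label this a ``corollary''.
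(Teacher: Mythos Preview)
Your proof is correct and follows essentially the same route as the paper: invoke \textbf{Corollary \ref{pablo}} at the unique maximal ideal $\mathfrak{m}$ to get $A/\mathfrak{m}\cong A\{(A\setminus\mathfrak{m})^{-1}\}$, then use locality ($A\setminus\mathfrak{m}=A^{\times}$) to identify the localization with $A$ itself, concluding that $A$ is isomorphic to the $\mathcal{C}^{\infty}$-field $A/\mathfrak{m}$. The only cosmetic difference is that the paper phrases the conclusion as ``$A\cong A_{\mathfrak{m}}$ is isomorphic to a $\mathcal{C}^{\infty}$-field'' rather than passing through ``$\mathfrak{m}=(0)$''; your extra invocation of \textbf{Theorem \ref{bender}} to justify $\mathfrak{m}\in\mathrm{Spec}^{\infty}(A)$ is a welcome bit of care that the paper leaves implicit.
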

\begin{proof} Since $A$ is a local $\mathcal{C}^{\infty}-$ring there is a unique maximal ideal, $\mathfrak{m} \subseteq A$. Since $A$ is a von Neumann-regular $\mathcal{C}^{\infty}-$ring, by \textbf{Corollary \ref{pablo}},
$$A_{\mathfrak{m}} \cong \dfrac{A}{\mathfrak{m}},$$
and the latter is a $\mathcal{C}^{\infty}-$field.\\

Also, $A_{\mathfrak{m}} = A\{ A \setminus \mathfrak{m}^{-1}\} = A\{ {A^{\times}}^{-1}\} \cong A$, and since $A_{\mathfrak{m}}$ is isomorphic to a $\mathcal{C}^{\infty}-$field, it follows that $A$ is a $\mathcal{C}^{\infty}-$field.
\end{proof}

Summarizing, we have the following result:

\begin{theorem}\label{lab}If $A$ is a von Neumann-regular $\mathcal{C}^{\infty}-$ring, then $({\rm Spec}^{\infty}\,(A), {\rm Zar}^{\infty})$ (which we denote simply by ${\rm Spec}^{\infty}(A)$) is a Boolean topological space, by  \textbf{Theorem \ref{ota}}. Moreover, by \textbf{Proposition \ref{Phil}}, for every $\mathfrak{p} \in {\rm Spec}^{\infty}\,(A)$,
$$A_{\mathfrak{p}} = \varinjlim_{a \notin \mathfrak{p}} A\{ a^{-1}\} \cong A\{ A\setminus \mathfrak{p}\}$$
is a $\mathcal{C}^{\infty}-$field.
\end{theorem}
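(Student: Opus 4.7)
The plan is to read this as a summary theorem and assemble it from the pieces already established in the excerpt, treating only the identification of the stalk $A_{\mathfrak{p}}$ with $A\{A\setminus\mathfrak{p}\}$ as requiring extra care.

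First, I would dispose of the topological half: since $A$ is assumed von Neumann-regular, \textbf{Theorem \ref{ota}} applies and gives immediately that $A$ is $\mathcal{C}^{\infty}$-reduced and that $({\rm Spec}^{\infty}\,(A),{\rm Zar}^{\infty})$ is a Boolean topological space (compact, Hausdorff, totally disconnected). So the first clause is just a citation, with no additional work needed.

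Next, I would justify the equality $A_{\mathfrak{p}}=\varinjlim_{a\notin\mathfrak{p}}A\{a^{-1}\}\cong A\{A\setminus\mathfrak{p}\}$. The right-hand isomorphism is the standard fact that $\mathcal{C}^{\infty}$-localization at a multiplicative set is the filtered colimit of the localizations at its finite subsets: writing $A\setminus\mathfrak{p}=\bigcup_{F\subseteq_{\rm fin}A\setminus\mathfrak{p}}F$ and using that localization at a finite set $F=\{a_1,\ldots,a_n\}$ reduces to localization at the single element $a=\prod_i a_i$ (so $A\{F^{-1}\}\cong A\{a^{-1}\}$), the universal property of filtered colimits in $\mathcal{C}^{\infty}{\rm\bf Rng}$ yields $A\{(A\setminus\mathfrak{p})^{-1}\}\cong\varinjlim_{a\notin\mathfrak{p}}A\{a^{-1}\}$. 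This is analogous to the argument used in the earlier lemma about $\widetilde{U}(A\{S^{-1}\})=\widetilde{U}(A)[S^{-1}]$, where the passage from finite to arbitrary $S$ was handled by filtered colimits.

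Finally, I would invoke \textbf{Proposition \ref{Phil}} directly: it states that under the standing hypothesis (von Neumann-regular $\mathcal{C}^{\infty}$-ring, $\mathfrak{p}\in{\rm Spec}^{\infty}\,(A)$), the localization $A\{(A\setminus\mathfrak{p})^{-1}\}$ is a $\mathcal{C}^{\infty}$-field. Combined with the colimit identification from the previous step, this gives the desired conclusion for $A_{\mathfrak{p}}$.

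The only genuinely non-formal step is the colimit identification, and even that is standard; the rest is strictly a bundling of previously proved results. The main (very mild) obstacle I would anticipate is making sure the notation $A_{\mathfrak{p}}$ introduced in the statement is unambiguous---namely confirming that the directed system $\{A\{a^{-1}\}\}_{a\notin\mathfrak{p}}$ is genuinely filtered (which follows because $A\setminus\mathfrak{p}$ is multiplicatively closed, so for $a,b\notin\mathfrak{p}$ the element $ab\notin\mathfrak{p}$ yields a common refinement $A\{(ab)^{-1}\}$ of $A\{a^{-1}\}$ and $A\{b^{-1}\}$) and that the transition maps into it commute with the universal cocone. Once this is spelled out, the theorem follows formally from \textbf{Theorem \ref{ota}} and \textbf{Proposition \ref{Phil}}.
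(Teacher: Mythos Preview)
Your proposal is correct and matches the paper's approach exactly: the paper gives no separate proof for this theorem, treating it purely as a summary that cites \textbf{Theorem \ref{ota}} and \textbf{Proposition \ref{Phil}} in the statement itself. Your extra care in spelling out why $\varinjlim_{a\notin\mathfrak{p}}A\{a^{-1}\}\cong A\{(A\setminus\mathfrak{p})^{-1}\}$ (via finite subsets and filtered colimits, as in the earlier lemma on $\widetilde{U}(A\{S^{-1}\})$) goes slightly beyond what the paper makes explicit, but is entirely in keeping with its methods.
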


The above theorem suggests us that von Neumann-regular $\mathcal{C}^{\infty}-$rings behave much like ordinary von Neumann-regular commutative unital rings. In the next chapter we are going to explore this result using sheaf theoretic machinery.

\begin{proposition}\label{proposition6}The limit in $\mathcal{C}^{\infty}{\rm \bf Rng}$ of a diagram of von Neumann-regular $\mathcal{C}^{\infty}-$rings is a von Neumann-regular $\mathcal{C}^{\infty}-$ring. In particular, $\mathcal{C}^{\infty}{\rm \bf vNRng}$ is a complete category and the inclusion functor $\imath: \mathcal{C}^{\infty}{\rm \bf vNRng} \hookrightarrow \mathcal{C}^{\infty}{\rm \bf Rng}$ preserves all limits.
\end{proposition}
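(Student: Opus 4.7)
The plan is to exploit characterization (iii) of \textbf{Proposition \ref{proposition1}}, namely that $A$ is von Neumann-regular iff for every $a \in A$ there exists a \emph{unique} $b \in A$ with $a = a^{2}b$ and $b = b^{2}a$. This formulation has a built-in uniqueness clause and therefore behaves well under the pointwise construction of limits in $\mathcal{C}^{\infty}{\rm \bf Rng}$. Since $\mathcal{C}^{\infty}{\rm \bf Rng}$ is complete and every small limit can be obtained from products and equalizers, it suffices to verify that these two constructions preserve von Neumann-regularity. From this it will follow that $\imath$ creates all small limits, hence $\mathcal{C}^{\infty}{\rm \bf vNRng}$ is complete and $\imath$ preserves limits.

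For products, let $\{A_{i}\}_{i \in I}$ be a family of von Neumann-regular $\mathcal{C}^{\infty}-$rings and $a = (a_{i})_{i \in I} \in \prod_{i \in I} A_{i}$. For each $i \in I$ let $b_{i} \in A_{i}$ be the unique element supplied by (iii) for $a_{i}$, and set $b := (b_{i})_{i \in I}$. The identities $a = a^{2}b$ and $b = b^{2}a$ hold coordinatewise, and uniqueness of $b$ follows from coordinatewise uniqueness since the smooth operations on $\prod_{i \in I} A_{i}$ are defined pointwise.

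For equalizers, consider $\mathcal{C}^{\infty}-$homomorphisms $f,g \colon A \rightrightarrows B$ between von Neumann-regular $\mathcal{C}^{\infty}-$rings; the equalizer $E$ is the sub-$\mathcal{C}^{\infty}-$ring $\{a \in A \mid f(a) = g(a)\}$. Given $a \in E$, take the unique $b \in A$ witnessing (iii) for $a$ in $A$; we must verify $b \in E$. Applying $f$ gives $f(a) = f(a)^{2}f(b)$ and $f(b) = f(b)^{2}f(a)$, and applying $g$ gives $g(a) = g(a)^{2}g(b)$ and $g(b) = g(b)^{2}g(a)$. Since $f(a) = g(a)$, both $f(b)$ and $g(b)$ satisfy the conditions of (iii) for the common element $f(a) \in B$; by the uniqueness part of (iii) in the von Neumann-regular $\mathcal{C}^{\infty}-$ring $B$, we conclude $f(b) = g(b)$, i.e., $b \in E$. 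Hence $E$ is von Neumann-regular, and the witness $b$ in $E$ is the restriction of the witness in $A$.

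The main obstacle is identifying the right first-order formulation. Condition (i), $a = a^{2}x$, is existential without uniqueness and does not obviously descend to equalizers, since nothing guarantees that a pseudo-inverse chosen in $A$ is fixed by both $f$ and $g$. Condition (iii), however, has the required uniqueness, which forces the agreement $f(b) = g(b)$ in $B$ and makes the closure argument immediate. Once this observation is in place, the remainder is a routine pointwise verification, and the promised completeness of $\mathcal{C}^{\infty}{\rm \bf vNRng}$ together with preservation of limits by $\imath$ follows formally.
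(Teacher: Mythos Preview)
Your proof is correct and follows the same overall strategy as the paper: reduce to products and equalizers and use characterization (iii) of \textbf{Proposition \ref{proposition1}}. The one noteworthy difference is in the equalizer step. The paper argues by an explicit chain of equalities, first proving that the idempotent $ab$ lies in $E$ and then using $b = ab^{2}$ to deduce $b \in E$. Your argument is more direct: since $f(b)$ and $g(b)$ both satisfy the two equations of (iii) for the common element $f(a)=g(a)$ in $B$, the uniqueness clause of (iii) in $B$ forces $f(b)=g(b)$ immediately. This is slightly slicker and makes transparent exactly why the uniqueness in (iii) is the essential ingredient; the paper's computation establishes the same fact by hand without naming it.
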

\begin{proof}
It is clear from the definition that the class $\mathcal{C}^{\infty}{\rm \bf vNRng}$ of von Neumann-regular $\mathcal{C}^{\infty}-$rings is closed under arbitrary products in the class $\mathcal{C}^{\infty}{\rm \bf Rng}$ of all $\mathcal{C}^{\infty}-$rings. Thus it suffices to show that it is closed under equalizers. So let $A,B$ be  von Neumann-regular $\mathcal{C}^{\infty}-$rings and $f,g: A \to B$ $\mathcal{C}^{\infty}-$ring homomorphisms. Their equalizer in the category $\mathcal{C}^{\infty}{\rm \bf Rng}$ is given by the set $E:= \{ a \in A | f(a) = g(a)\}$, endowed with the restricted ring operations from $A$.\\

To see that $E$ is a von Neumann-regular $\mathcal{C}^{\infty}-$ring, we need to show that for $a \in E$, the unique element $b \in A$ satisfying $a^2b=a$ and $b^2a=b$ also belongs to $E$.\\

First we show that the idempotent element $ab$ belongs to $E$. Indeed, we have $f(ab)=f(a)f(b)=g(a)f(b)=g(a^2b)f(b) = g(a)g(ab)f(b) = f(a)g(ab)f(b)=f(ab)g(ab)$. Exchanging $f$ and $g$ in this chain of equations, we also get $g(ab)=f(ab)g(ab)$. Altogether we obtain $g(ab)=f(ab)$, and hence $ab \in E$.\\

Now we use this, as well as the fact that we also have the equation $b=ab^2$, and conclude $f(b)=f(ab^2)=f(b)f(ab)=f(b)g(ab)=f(b)g(a)g(b) = f(b)f(a)g(b)=f(ab)g(b) = g(ab)g(b)=g(ab^2)=g(b)$.
\end{proof}

\begin{proposition}\label{proposition8} The category $\mathcal{C}^{\infty}{\rm \bf vNRng}$ is the smallest subcategory of $\mathcal{C}^{\infty}{\rm \bf Rng}$ closed under limits containing all $\mathcal{C}^{\infty}-$fields.
\end{proposition}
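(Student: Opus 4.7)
My plan is to prove two inclusions of full subcategories of $\mathcal{C}^{\infty}{\rm \bf Rng}$.

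First, every $\mathcal{C}^{\infty}-$field $K$ is von Neumann regular: condition (i) of Definition \ref{arnaldo} is witnessed by $x = a^{-1}$ when $a \neq 0$ and by $x = 0$ when $a = 0$. Combined with the closure of $\mathcal{C}^{\infty}{\rm \bf vNRng}$ under limits (Proposition \ref{proposition6}), this shows $\mathcal{C}^{\infty}{\rm \bf vNRng}$ is itself a full subcategory of $\mathcal{C}^{\infty}{\rm \bf Rng}$ closed under limits and containing all $\mathcal{C}^{\infty}-$fields, hence contains the minimal such subcategory.

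For the converse, let $\mathcal{D}$ be any full subcategory of $\mathcal{C}^{\infty}{\rm \bf Rng}$ closed under limits and containing all $\mathcal{C}^{\infty}-$fields, and fix $A \in \mathcal{C}^{\infty}{\rm \bf vNRng}$; I must show $A \in \mathcal{D}$. By Theorem \ref{bender}, $A$ is $\mathcal{C}^{\infty}-$reduced and $\bigcap_{\mathfrak{p} \in {\rm Spec}^{\infty}(A)} \mathfrak{p} = (0)$. By Corollary \ref{pablo}, each $A/\mathfrak{p}$ is a $\mathcal{C}^{\infty}-$field and therefore belongs to $\mathcal{D}$; hence the product $P := \prod_{\mathfrak{p} \in {\rm Spec}^{\infty}(A)} A/\mathfrak{p}$ lies in $\mathcal{D}$, and the canonical $\mathcal{C}^{\infty}-$homomorphism $\eta : A \to P$ is injective.

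The crucial step is to realise $A$ itself as a limit of $\mathcal{C}^{\infty}-$fields in $\mathcal{C}^{\infty}{\rm \bf Rng}$, which forces $A \in \mathcal{D}$. My approach uses the Pierce-type sheaf representation suggested by Theorems \ref{ota} and \ref{lab}: ${\rm Spec}^{\infty}(A)$ is a Boolean space whose structure sheaf $\widetilde{A}$ has $\mathcal{C}^{\infty}-$field stalks $A_\mathfrak{p} \cong A/\mathfrak{p}$, and $A$ is recovered as the ring of global sections of $\widetilde{A}$. The sheaf axiom for a cover of ${\rm Spec}^{\infty}(A)$ by basic clopens $\{D^{\infty}(e)\}$ presents $A$ as an equalizer of two parallel homomorphisms between products of the rings $A/(1-e)$; taking a cofiltered limit over all refinements of such covers and passing to the field-valued stalks should realize $A$ as a limit of a diagram of $\mathcal{C}^{\infty}-$fields in $\mathcal{C}^{\infty}{\rm \bf Rng}$.

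The main obstacle is making this cofiltered refinement precise: for infinite Boolean ${\rm Spec}^{\infty}(A)$, no finite clopen partition separates every pair of primes, so no finitely many iterations of the sheaf-axiom equalizer produce $\mathcal{C}^{\infty}-$fields at every node of the diagram. A cleaner categorical alternative is to exhibit $A$ as the equalizer of two specific maps $\prod_\mathfrak{p} A/\mathfrak{p} \rightrightarrows \prod_\alpha K_\alpha$ into a second (possibly larger) product of $\mathcal{C}^{\infty}-$fields, whose equalising condition characterises exactly the ``locally representable'' global sections of the structure sheaf (i.e., the image of $\eta$); constructing such an explicit equalizer presentation is the technical heart of the argument and is the $\mathcal{C}^{\infty}-$analogue of Pierce's representation theorem for von Neumann-regular rings.
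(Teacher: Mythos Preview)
Your strategy matches the paper's exactly through the first inclusion and the identification of $A$ as the ring of global sections of a sheaf on the Boolean space ${\rm Spec}^{\infty}(A)$ with $\mathcal{C}^{\infty}$-field stalks. The gap is that you stop at precisely the point where the content lies: you correctly diagnose that naive finite-cover equalizers only yield the rings $A/(1-e)$, not fields, and then you \emph{announce} that one needs ``an explicit equalizer presentation $\prod_{\mathfrak p}A/\mathfrak p \rightrightarrows \prod_\alpha K_\alpha$'' without constructing it. As written, the proof is incomplete.

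The paper closes this gap by invoking a general result of Kennison (\textbf{Lemma 2.5} of \cite{Ken}): the global sections of a sheaf can be expressed as a limit of a diagram whose objects are products and \emph{ultraproducts} of the stalks. The point you are missing is that an ultraproduct of $\mathcal{C}^{\infty}$-fields is again a $\mathcal{C}^{\infty}$-field (ultraproducts preserve the first-order condition ``every nonzero element is invertible''), so every node of Kennison's diagram is either a $\mathcal{C}^{\infty}$-field or a product of $\mathcal{C}^{\infty}$-fields. Any limit-closed full subcategory $\mathcal{D}$ containing all $\mathcal{C}^{\infty}$-fields therefore contains every node of the diagram, and hence contains its limit $A$. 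This is exactly the ``second product of fields $\prod_\alpha K_\alpha$'' you were looking for: the $K_\alpha$ are the ultraproducts of the residue fields along ultrafilters on ${\rm Spec}^{\infty}(A)$, and the two parallel maps encode the compatibility of a family of stalk values with the sheaf gluing condition.
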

\begin{proof}
Clearly all $\mathcal{C}^{\infty}-$fields are von Neumann-regular $\mathcal{C}^{\infty}-$rings, and by \textbf{Proposition \ref{proposition6}} so are limits of $\mathcal{C}^{\infty}-$fields. Thus $\mathcal{C}^{\infty}{\rm \bf vNRng}$ contains all limits of $\mathcal{C}^{\infty}-$fields. On the other hand the ring of global sections of a sheaf can be expressed as a limit of a diagram of products and ultraproducts of the stalks (by \textbf{Lemma 2.5} of \cite{Ken}). All these occurring (ultra)products are von Neumann-regular $\mathcal{C}^{\infty}-$rings as well and hence so is their limit, by \textbf{Proposition \ref{proposition6}}.
\end{proof}

\section{Sheaves and von Neumann Regular $\mathcal{C}^{\infty}-$Rings}

In this section we present applications of sheaf-theoretic notions to von Neumann Regular $\mathcal{C}^{\infty}-$Rings.

\subsection{The von Neumann Regular Hull of a $\mathcal{C}^{\infty}-$Ring}

In this subsection we follow \cite{Mariano}.\\

As defined, the affine scheme of a $\mathcal{C}^{\infty}-$ring $A$ is the $\mathcal{C}^{\infty}-$locally ringed space $({\rm Spec}^{\infty}\,(A), \Sigma_A)$, where given any basic open set $D^{\infty}\,(a)$, for some $a \in A$, one defines $\Sigma_A(D^{\infty}(a)):= A\{ a^{-1}\}$. The stalks of this sheaf are local $\mathcal{C}^{\infty}-$rings. In particular we have, for any $\mathcal{C}^{\infty}-$ring $A$:

$$\Gamma({\rm Spec}^{\infty}\,(A), \Sigma_A)\cong A,$$

as stated in \textbf{Theorem 1.7} of \cite{rings2}. This fact suggests us that if we get a sheaf of $\mathcal{C}^{\infty}-$rings over the booleanization of the spectral space ${\rm Spec}^{\infty}\,(A)$, that is, over ${\rm Spec}^{\infty}\,(A)$ together with the constructible topology, whose stalk at each prime $\mathcal{C}^{\infty}-$radical ideal $\mathfrak{p}$ is the residue field $k_{\mathfrak{p}}\,(A)$ of the local $\mathcal{C}^{\infty}-$ring $A_{\mathfrak{p}}$, then the ring of global sections of this sheaf should be the ``closest'' von Neumann regular $\mathcal{C}^{\infty}-$ring to $A$.\\

In the following, we are to see that \textit{if} a $\mathcal{C}^{\infty}-$ring $A$ has a von Neumann regular hull, that is, a pair $(\nu_A, {\rm vN}\,(A))$ where $\nu_A: A \rightarrow {\rm vN}\,(A)$ is such that for any von Neumann regular $\mathcal{C}^{\infty}-$ring $B$ and every $\mathcal{C}^{\infty}-$homomorphism $f: A \to B$ there is a unique $\mathcal{C}^{\infty}-$homomorphism $\widetilde{f}: {\rm vN}\,(A) \to B$ such that the following diagram commutes:

$$\xymatrixcolsep{3pc}\xymatrix{
A \ar[r]^{\nu_A} \ar[dr]_{f} & {\rm vN}\,(A) \ar@{-->}[d]^{\exists ! \widetilde{f}}\\
 & B
}$$

\textit{then} the spectral sheaf of ${\rm vN}\,(A)$ \textit{must} be such that the topological space ${\rm Spec}^{\infty}({\rm vN}\,(A))$ is homeomorphic to the booleanization of ${\rm Spec}^{\infty}\,(A)$ and the stalks ${\rm vN}\,(A)_{\mathfrak{p}}$, for $\mathfrak{p} \in {\rm Spec}^{\infty}\,({\rm vN}\,(A))$, are isomorphic to the residue fields of the local $\mathcal{C}^{\infty}-$ring $A_{\mathfrak{p}}$, for $\mathfrak{p} \in {\rm Spec}^{\infty}\,(A)$.\\

In fact, \underline{every} $\mathcal{C}^{\infty}-$ring has a von Neumann-regular hull.\\

Given any $\mathcal{C}^{\infty}-$ring $A$, there exists a von Neumann regular $\mathcal{C}^{\infty}-$ring and a $\mathcal{C}^{\infty}-$homomorphism $\nu: A \to V$ such that for every $\mathcal{C}^{\infty}-$ring $B$ and any $\mathcal{C}^{\infty}-$homomorphism $g: A \to B$  there is a unique $\mathcal{C}^{\infty}-$homomorphism $f: V \to B$ such that the following diagram commutes:

$$\xymatrixcolsep{3pc}\xymatrix{
A \ar[r]^{\nu} \ar[dr]_{g} & V \ar[d]^{\exists ! f}\\
 & B
}$$

It will follow that the inclusion functor $i: \mathcal{C}^{\infty}{\rm \bf vNRng} \hookrightarrow \mathcal{C}^{\infty}{\rm \bf Rng}$ has a left-adjoint $\nu$, that is, $\mathcal{C}^{\infty}{\rm \bf vNRng}$ is a reflexive subcategory of $\mathcal{C}^{\infty}{\rm \bf Rng}$ (cf. p. 91 of \cite{CWM}).\\

Given the $\mathcal{C}^{\infty}-$ring $(A, \Phi)$, we first take the coproduct of $A$ with the free $\mathcal{C}^{\infty}-$ring generated by $A$, $\eta_A: A \rightarrow (L(A),\Omega)$:

$$\xymatrixcolsep{3pc}\xymatrix{
 & A\{ x_a | a \in A\}& \\
A \ar[ur]^{\iota_A} & & L(A) \ar[ul]_{\iota_{L(A)}}
}$$

where:

$$\begin{array}{cccc}
    \iota_A: & A & \rightarrow & A\{ x_a | a \in A\}  \\
     & a & \mapsto & \iota_A(a)
  \end{array}$$

and:

$$\begin{array}{cccc}
    \iota_{L(A)}: & L(A) & \rightarrow & A\{ x_a | a \in A\}  \\
     & \widetilde{a} & \mapsto & \iota_{L(A)}(\widetilde{a})
  \end{array}$$

and then we divide it by the ideal generated by the set:

$$\{ (\iota_{L(A)}\circ \eta_A)(a)\cdot \iota_A(a) \cdot (\iota_{L(A)} \circ \eta_A)(a) - (\iota_{L(A)}\circ \eta_A)(a), \iota_A(a)\cdot (\iota_{L(A)}\circ \eta_A)(a) \cdot \iota_A(a) - \iota_A(a) | a \in A \}$$

so we have:

\begin{equation}\label{geleia}
\dfrac{A\{ x_a | a \in A\}}{\langle \{ (\iota_{L(A)}\circ \eta_A)(a)\cdot \iota_A(a) \cdot (\iota_{L(A)} \circ \eta_A)(a) - (\iota_{L(A)}\circ \eta_A)(a), \iota_A(a)\cdot (\iota_{L(A)}\circ \eta_A)(a) \cdot \iota_A(a) - \iota_A(a) | a \in A \}\rangle}\end{equation}

In order to simplify our exposition, within this context we are going to write $x_a$ to denote $(\iota_{L(A)}\circ \eta_A)(a) \in A\{ x_a | a \in A\}$ for any $a \in A$, and $a$ to denote $\iota_A(a) \in A\{ x_a | a \in A\}$. Thus we write:

$$\dfrac{A\{ x_a | a \in A\}}{\langle \{x_a \cdot a \cdot x_a - x_a, a \cdot x_a \cdot a - a | a \in A\}\rangle}$$

instead of \eqref{geleia}.\\

Given a von Neumann regular $\mathcal{C}^{\infty}-$ring $V$ and a $\mathcal{C}^{\infty}-$homomorphism $f: A \to V$, define the function:

$$\begin{array}{cccc}
    F: & A & \rightarrow & V \\
     & a & \mapsto & f(a)^{*}
  \end{array}$$

where $f(a)^{*}$ denotes the quasi-inverse of $f(a)$ in $V$.\\

From the universal property of the free $\mathcal{C}^{\infty}-$ring generated by $A$, $L(A)$, there is a unique $\mathcal{C}^{\infty}-$homomorphism $\widetilde{F}: L(A) \rightarrow V$ such that the following diagram commutes:

$$\xymatrixcolsep{3pc}\xymatrix{
U(A) \ar[r]^{\eta_A} \ar[dr]_{F} & L(A) \ar@{-->}[d]^{\exists ! \widetilde{F}}\\
 & V}$$

By the universal property of the $\mathcal{C}^{\infty}-$coproduct, given the $\mathcal{C}^{\infty}-$homomorphisms $f: A \to V$ and $\widetilde{F}: L(A) \to V$ (the latter is constructed from $f$), there is a unique $\mathcal{C}^{\infty}-$homomorphism $\widehat{f}: A\{ x_a | a \in A\} \to V$ such that:

$$\xymatrixcolsep{3pc}\xymatrix{
A \ar[dr]^{\iota_A}\ar@/^2pc/[rrd]^{f}& & \\
 & A\{x_a | a \in A \} \ar@{-->}[r]^{\exists ! \widehat{f}} & V \\
L(A) \ar[ur]_{\iota_{L(A)}} \ar@/_2pc/[rru]_{\widetilde{F}}& &
}$$

commutes.\\

Note that $\{ x_a \cdot a \cdot x_a - x_a, a \cdot x_a \cdot a - a | a \in A\} \subseteq \ker(\widehat{f})$, for given any $\iota_A(a) = a, (\iota_{L(A)}\circ \eta_A)(a) = x_a \in A\{ x_a | a \in A\}$:

$$\widehat{f}(x_a \cdot a \cdot x_a - x_a) = \widehat{f}(x_a) \cdot \widehat{f}(a) \cdot \widehat{f}(x_a) - \widehat{f}(x_a) = f(a)^{*}\cdot f(a) \cdot f(a)^{*} - f(a)^{*} = 0$$

and

$$\widehat{f}(a \cdot x_a \cdot a - a) = \widehat{f}(a) \cdot \widehat{f}(x_a) \cdot \widehat{f}(a) - \widehat{f}(a) = f(a)\cdot f(a)^{*}\cdot f(a) - f(a) = 0.$$

By the \textbf{Theorem of Homomorphism}, there is a unique $\mathcal{C}^{\infty}-$homomorphism:

$$\overline{f}: \dfrac{A\{x_a | a \in A \}}{\langle \{ x_a \cdot a \cdot x_a - x_a, a \cdot x_a \cdot a - a | a \in A \}\rangle} \rightarrow V$$

such that the following diagram commutes:

$$\xymatrixcolsep{3pc}\xymatrix{
A\{ x_a | a \in A\} \ar[r]^{\widehat{f}} \ar[d]_{q} & V \\
\dfrac{A\{x_a | a \in A \}}{\langle \{ x_a \cdot a \cdot x_a - x_a, a \cdot x_a \cdot a - a | a \in A \}\rangle} \ar@{-->}[ur]_{\exists ! \overline{f}}
},$$

where $q: A\{ x_a | a \in A\} \to \dfrac{A\{x_a | a \in A \}}{\langle \{ x_a \cdot a \cdot x_a - x_a, a \cdot x_a \cdot a - a | a \in A \}\rangle}$ is the canonical quotient map.\\

Thus we obtain a $\mathcal{C}^{\infty}-$ring $\dfrac{A\{x_a | a \in A \}}{\langle \{ x_a \cdot a \cdot x_a - x_a, a \cdot x_a \cdot a - a | a \in A \}\rangle}$ together with a $\mathcal{C}^{\infty}-$homomorphism
$$\nu_{0,1} = q \circ \iota_A: A \to \dfrac{A\{x_a | a \in A \}}{\langle \{ x_a \cdot a \cdot x_a - x_a, a \cdot x_a \cdot a - a | a \in A \}\rangle}$$
that have the following universal property: given any von Neumann regular $\mathcal{C}^{\infty}-$ring $V$ and any $\mathcal{C}^{\infty}-$homomorphism $f: A \to V$, there is a unique $\mathcal{C}^{\infty}-$homo\-morphism $\overline{f}: \dfrac{A\{x_a | a \in A \}}{\langle \{ x_a \cdot a \cdot x_a - x_a, a \cdot x_a \cdot a - a | a \in A \}\rangle} \to V$ such that the following diagram commutes:

$$\xymatrixcolsep{3pc}\xymatrix{
A \ar[r]^(.25){\nu_{0,1}} \ar[dr]_{f} & \dfrac{A\{x_a | a \in A \}}{\langle \{ x_a \cdot a \cdot x_a - x_a, a \cdot x_a \cdot a - a | a \in A \}\rangle} \ar@{-->}[d]^{\exists ! \overline{f}}\\
 & V
}$$

We have the following:\\

\begin{lemma}\label{decio}Given a $\mathcal{C}^{\infty}-$ring $B$, let:
$$B^{+}: = \dfrac{B\{ x_b | b \in B \}}{\langle \{x_b \cdot b \cdot x_b - x_b, b \cdot x_b \cdot b - b | b \in B\} \rangle}$$
and $\nu_{0,1}:B \to B^{+}$ be the following $\mathcal{C}^{\infty}-$homomorphism:

$$\begin{array}{cccc}
    \nu_{0,1}: & B & \rightarrow & B^{+} \\
     & b & \mapsto & q(\iota_B(b))
  \end{array}$$

where $\iota_B: B \to B\{ x_b | b \in B\}$ is the inclusion map in the coproduct of $B$ and $L(B)$, and $q: B\{ x_b | b \in B\} \to \dfrac{B\{ x_b | b \in B \}}{\langle \{x_b \cdot b \cdot x_b - x_b, b \cdot x_b \cdot b - b | b \in B\} \rangle} $ is the canonical quotient map.\\

Given any von Neumann regular $\mathcal{C}^{\infty}-$ring $V$ and any $\mathcal{C}^{\infty}-$homomorphism $f: B \to V$, there is a unique $\mathcal{C}^{\infty}-$homomorphism $\overline{f}: B^{+} \to V$ such that the following diagram commutes:

$$\xymatrixcolsep{3pc}\xymatrix{
B \ar[r]^{\nu_{0,1}} \ar[dr]_{f} & B^{+} \ar@{-->}[d]^{\exists ! \overline{f}}\\
& V
}$$
\end{lemma}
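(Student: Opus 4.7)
The plan is to formalize the construction already outlined immediately before the lemma statement. Given a $\mathcal{C}^{\infty}-$homomorphism $f: B \to V$ into a von Neumann regular $\mathcal{C}^{\infty}-$ring $V$, I would first define the set-function $F: B \to V$ by $F(b) = f(b)^{*}$, where $f(b)^{*}$ denotes the unique quasi-inverse of $f(b)$ provided by condition (iii) of \textbf{Proposition \ref{proposition1}}. Using the universal property of the free $\mathcal{C}^{\infty}-$ring $L(B)$ on the underlying set of $B$, extend $F$ to a $\mathcal{C}^{\infty}-$homomorphism $\widetilde{F}: L(B) \to V$. Then invoke the universal property of the $\mathcal{C}^{\infty}-$coproduct $B\{ x_b \mid b \in B\} \cong B \sqcup L(B)$ to amalgamate $f$ and $\widetilde{F}$ into a single $\mathcal{C}^{\infty}-$homomorphism $\widehat{f}: B\{ x_b \mid b \in B\} \to V$ satisfying $\widehat{f}(b) = f(b)$ and $\widehat{f}(x_b) = f(b)^{*}$ for every $b \in B$.

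The key verification is that the generators of the defining ideal of $B^{+}$ lie in $\ker(\widehat{f})$. The identities
$$\widehat{f}(x_b \cdot b \cdot x_b - x_b) = f(b)^{*} f(b) f(b)^{*} - f(b)^{*} = 0$$
and
$$\widehat{f}(b \cdot x_b \cdot b - b) = f(b) f(b)^{*} f(b) - f(b) = 0$$
are precisely the quasi-inverse equations in $V$, which hold because $V$ is von Neumann regular. The Theorem of the Homomorphism then produces the required factorization $\overline{f}: B^{+} \to V$ with $\overline{f} \circ \nu_{0,1} = f$, giving existence.

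For uniqueness, I would observe that $B^{+}$ is generated as a $\mathcal{C}^{\infty}-$ring by the classes $\{ \nu_{0,1}(b) \mid b \in B\} \cup \{ [x_b] \mid b \in B\}$. Any $\mathcal{C}^{\infty}-$homomorphism $g: B^{+} \to V$ with $g \circ \nu_{0,1} = f$ is forced to send $\nu_{0,1}(b) \mapsto f(b)$, and the defining relations of $B^{+}$ impose that $g([x_b])$ be a quasi-inverse of $f(b)$ in the sense of condition (iii) of \textbf{Proposition \ref{proposition1}}. Since $V$ is von Neumann regular, such a quasi-inverse is \emph{unique}, forcing $g([x_b]) = f(b)^{*} = \overline{f}([x_b])$; hence $g$ and $\overline{f}$ agree on a generating set, and therefore coincide.

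The main obstacle is little more than bookkeeping: carefully chasing the two consecutive universal properties (free $\mathcal{C}^{\infty}-$ring, then coproduct) without confusing the two roles that elements of $B$ play inside $B\{ x_b \mid b \in B\}$, namely as images of $\iota_B$ and as indices $b$ in $x_b$. The essential conceptual content is the uniqueness of the quasi-inverse in $V$, which is exactly what makes the assignment $[x_b] \mapsto f(b)^{*}$ canonical and thereby secures both that the defining relations lie in $\ker(\widehat{f})$ and that $\overline{f}$ is the only possible factorization.
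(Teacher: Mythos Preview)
Your proposal is correct and follows essentially the same approach as the paper: the existence argument reproduces verbatim the construction given just before the lemma (define $F(b)=f(b)^{*}$, extend via the free object and coproduct, check the relations, descend through the quotient), and your uniqueness argument via the forced value $g([x_b])=f(b)^{*}$ from the quasi-inverse uniqueness in $V$ is exactly what the paper does, only phrased in terms of generators rather than re-chasing the coproduct and quotient universal properties.
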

\begin{proof}
The existence of such a $\mathcal{C}^{\infty}-$homomorphism $\overline{f}$ has already been established in the construction, so we need only to prove its uniqueness.\\

Let $f': \dfrac{B\{ x_b | b \in B \}}{\langle \{x_b \cdot b \cdot x_b - x_b, b \cdot x_b \cdot b - b | b \in B\} \rangle} \to V$ be a $\mathcal{C}^{\infty}-$homomorphism such that $\overline{f'} \circ \nu_{0,1} = 0$, that is, such that for every $b \in A$, $\overline{f'} \circ (q \circ \iota_B)(b) = f(b)$. This implies $((\overline{f'}\circ q \circ \iota_B)(b))^{*} = f(b)^{*}$.\\

Also, since $\overline{f'}$ is a $\mathcal{C}^{\infty}-$homomorphism, it follows that:

$$\overline{f'}(q(\iota_B(b))\cdot q(\iota_{L(B)}(b)) \cdot q(\iota_B(b)) - q(\iota_B(b))) = \overline{f'}(0) = 0$$
so

$$\overline{f'}(q(\iota_B(b)))\cdot \overline{f'}(q(\iota_{L(B)}(b))) \cdot \overline{f'}(q(\iota_B(b))) - \overline{f'}(q(\iota_B(b))) = 0$$

and

$$f(b) \cdot [\overline{f'}(q(\iota_B(b)))] \cdot f(b) - f(b) = 0.$$

Analogously we prove that:

$$[\overline{f'}(q(\iota_B(b)))] \cdot f(b) \cdot [\overline{f'}(q(\iota_B(b)))] - [\overline{f'}(q(\iota_B(b)))] = 0,$$

so
$$(\forall b \in B)(\overline{f'}(q(\iota_B(b)))= f(b)^{*})$$
and the following diagram commutes:

$$\xymatrixcolsep{3pc}\xymatrix{
B \ar[dr]_{\iota_B} \ar@/^3pc/[rrrd]^{f} & & & \\
 & B\{ x_b | b \in B \} \ar[r]^(.3){q} & \dfrac{B\{ x_b | b \in B \}}{\langle \{ x_b \cdot b \cdot x_b - x_b, b \cdot x_b \cdot b - b | b \in B \} \rangle} \ar[r]^(0.5){\overline{f'}} & V \\
L(B) \ar[ur]_{\iota_{L(B)}} \ar@/_3pc/[rrru]_{\widetilde{F}} & & &
}$$

where for every $x_b \in L(B)$, $\widetilde{F}(x_b)=f(b)^*$.\\

Now, $\overline{f}$ is the unique $\mathcal{C}^{\infty}-$homomorphism such that $\overline{f}\circ q = \widehat{f}$, and $\widehat{f}$ is the unique $\mathcal{C}^{\infty}-$homomorphism such that both $\widehat{f}\circ \iota_B = f$ and $\widehat{f}\circ \iota_{L(B)} = \widetilde{F}$. Thus, since $\overline{f'}\circ q \circ \iota_B = f$ and $\overline{f'}\circ q \circ \iota_{L(B)} = \widetilde{F}$, it follows that $\overline{f} = \overline{f'}$.
\end{proof}

Let $A$ be any $\mathcal{C}^{\infty}-$ring, so we define:

$$\begin{cases}
    A_0:= A \\
    A_{n+1} := A_n^{+}
  \end{cases}$$

and for every $n \in \mathbb{N}$ we define $\nu_{n,n+1}: A_n \to A_n^{+}=A_{n+1}$ as:

$$\begin{array}{cccc}
    \nu_{n,n+1}: & A_n & \rightarrow & \dfrac{A_n\{ x_a | a \in A_n\}}{\langle \{a \cdot x_a \cdot a - a, x_a \cdot a \cdot x_a - x_a | a \in A_n\} \rangle} \\
     & a & \mapsto & \iota_{A_n}(a) + \langle \{a \cdot x_a \cdot a - a, x_a \cdot a \cdot x_a - x_a | a \in A_n\} \rangle
  \end{array}$$

For every $n,m \in \mathbb{N}$ such that $n<m$, we define:

$$\nu_{n,m} = \displaystyle\bigcirc_{i=n}^{m-1} \nu_{i,i+1}$$

$$\xymatrixcolsep{3pc}\xymatrix{
A_n \ar@/_/[drrrr]_{\nu_{n,m}}\ar[r]^{\nu_{n,n+1}} & A_{n+1} \ar[r]^{\nu_{n+1,n+2}} & A_{n+3} \ar[r]^{\nu_{n+3,n+4}} & \cdots \ar[r]^{\nu_{m-2,m-1}}& A_{m-1} \ar[d]^{\nu_{m-1,m}}\\
 & & & & A_m
}$$

so we have an inductive system, $(\{ A_n\}_{n \in \mathbb{N}}, \{ A_n \stackrel{\nu_{n,m}}{\rightarrow} A_m | n \leq m\}_{n,m \in \mathbb{N}})$.\\

Let $A_{\omega}:= \varinjlim_{n \in \mathbb{N}} A_n$ be the colimit of the above inductive system, so the following diagram commutes for every $n, m \in \mathbb{N}$ such that $n<m$:

$$\xymatrixcolsep{3pc}\xymatrix{
 & A_{\omega} & \\
A_n \ar[rr]^{\nu_{n,m}} \ar[ur]^{\nu_n} & & A_m \ar[ul]_{\nu_m}
}$$

We claim that $A_{\omega}$ is a von Neumann regular $\mathcal{C}^{\infty}-$ring.\\

Given $[(a,n)] \in A_{\omega}$, there is some $a \in A_n$ such that $\nu_n(a) = [(a,n)]$. By construction, $\nu_{n,n+1}(a) \in A_{n+1}$ has a unique quasi-inverse, $\nu_{n,n+1}(a)^* \in A_{n+1}$, that is:

$$\nu_{n,n+1}(a) \cdot \nu_{n,n+1}(a)^{*} \cdot \nu_{n,n+1}(a) = \nu_{n,n+1}(a)$$
and

$$\nu_{n,n+1}(a)^{*} \cdot \nu_{n,n+1}(a) \cdot \nu_{n,n+1}(a)^{*} = \nu_{n,n+1}(a)^{*}.$$

Take $\nu_{n+1}(\nu_{n,n+1}(a)^{*}) = [(\nu_{n,n+1}(a)^{*}, n+1)]$. We have $[(a,n)]^{*} = [(\nu_{n,n+1}(a)^{*}, n+1)]$.\\

In fact, $\nu_n(a) = [(a,n)] = [(\nu_{n,n+1}(a),n+1)] = \nu_n \circ \nu_{n,n+1}(a)$, so:

\begin{multline*}
[(\nu_{n,n+1}(a)^{*}, n+1)] \cdot [(a,n)] \cdot [(\nu_{n,n+1}(a)^{*}, n+1)] = \\
= [(\nu_{n,n+1}(a)^{*}, n+1)] \cdot [(\nu_{n,n+1}(a),n+1)] \cdot [(\nu_{n,n+1}(a)^{*}, n+1)] =\\
= [(\nu_{n, n+1}(a) \cdot \nu_{n,n+1}(a)^{*} \cdot \nu_{n,n+1}(a), n+1)] = [(\nu_{n,n+1}(a)^{*}, n+1)]
\end{multline*}
and

\begin{multline*}[(a,n)] \cdot [(\nu_{n,n+1}(a)^{*}, n+1)] \cdot [(a,n)] = \\
= [(\nu_{n,n+1}(a),n+1)] \cdot [(\nu_{n,n+1}(a)^{*}, n+1)] \cdot [(\nu_{n,n+1}(a),n+1)] =\\
= [(\nu_{n, n+1}(a) \cdot \nu_{n,n+1}(a)^* \cdot \nu_{n,n+1}(a), n+1)] = [(\nu_{n,n+1}(a), n+1)] = [(a,n)]
\end{multline*}

We register the following:\\

\begin{definition}Let $A$ be a $\mathcal{C}^{\infty}-$ring. The \textbf{\index{$\mathcal{C}^{\infty}-$von Neumann-regular hull}$\mathcal{C}^{\infty}-$von Neumann-regular hull of} $A$, is a von Neumann-regular $\mathcal{C}^{\infty}-$ring, denoted by ${\rm vN}\,(A)$, together with a $\mathcal{C}^{\infty}-$homomorphism $\nu_A : A \to {\rm vN}\,(A)$ with the following universal property: for every von Neumann-regular $\mathcal{C}^{\infty}-$ring, $B$ and any $\mathcal{C}^{\infty}-$homomorphism $\mu: A \to B$, there is a unique $\mathcal{C}^{\infty}-$homomorphism $\varphi: {\rm vN}\,(A) \to B$ such that the following triangle commutes:
$$\xymatrix{
A \ar[r]^{\nu_A} \ar[rd]^{\mu} & {\rm vN}\,(A) \ar@{.>}[d]^{\exists ! \varphi}\\
    & B}$$
\end{definition}

\begin{theorem}\label{bbis}Let $A$ be any $\mathcal{C}^{\infty}-$ring. The pair $(A_{\omega}, \nu_0: A \to A_{\omega})$ (where $\nu_0$ is the colimit arrow) is the $\mathcal{C}^{\infty}-$von Neumann regular hull of $A$, that is, for every von Neumann-regular $\mathcal{C}^{\infty}-$ring $V$ and for every $\mathcal{C}^{\infty}-$homomorphism $f: A \to V$ there is a unique $\mathcal{C}^{\infty}-$homomorphism $\widetilde{f}: A_{\omega} \rightarrow V$ such that the following diagram commutes:

$$\xymatrixcolsep{3pc}\xymatrix{
A \ar[r]^{\nu_0} \ar[dr]_{f} & A_{\omega} \ar@{-->}[d]^{\exists ! \widetilde{f}}\\
 & V
}$$
\end{theorem}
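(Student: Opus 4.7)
The plan is to prove existence and uniqueness of $\widetilde{f}$ by combining \textbf{Lemma \ref{decio}} applied at each successor stage with the universal property of the filtered colimit $A_\omega = \varinjlim_n A_n$. The key observation is that the ``one-step'' universal property supplied by \textbf{Lemma \ref{decio}} is exactly what is needed to propagate $f$ up the tower, and the colimit absorbs the rest.

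For existence, I would inductively construct a compatible cocone $(f_n : A_n \to V)_{n \in \mathbb{N}}$ over the system $(\{A_n\}, \{\nu_{n,m}\})$. Set $f_0 := f$. Given $f_n : A_n \to V$, apply \textbf{Lemma \ref{decio}} with $B = A_n$ (legitimate since $V$ is von Neumann-regular) to obtain the unique $\mathcal{C}^{\infty}$-homomorphism $f_{n+1}: A_{n+1} = A_n^{+} \to V$ with $f_{n+1}\circ \nu_{n,n+1} = f_n$. A short induction on $m-n$ then gives $f_m \circ \nu_{n,m} = f_n$ whenever $n \leq m$, so the $f_n$ form a cocone to $V$. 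The universal property of the colimit now produces a unique $\widetilde{f}: A_\omega \to V$ with $\widetilde{f}\circ \nu_n = f_n$ for every $n$; specializing to $n=0$ yields $\widetilde{f}\circ \nu_0 = f$.

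For uniqueness, suppose $g: A_\omega \to V$ is any $\mathcal{C}^{\infty}$-homomorphism satisfying $g\circ \nu_0 = f$, and put $g_n := g\circ \nu_n$. Then $g_0 = f = f_0$, and $g_{n+1}\circ \nu_{n,n+1} = g \circ \nu_{n+1}\circ \nu_{n,n+1} = g\circ \nu_n = g_n$. An induction on $n$ using the uniqueness clause of \textbf{Lemma \ref{decio}} (at each step both $g_{n+1}$ and $f_{n+1}$ extend $f_n$ along $\nu_{n,n+1}$) forces $g_n = f_n$ for all $n$. Hence $g$ and $\widetilde{f}$ agree on every $\nu_n[A_n]$, and the uniqueness half of the colimit's universal property gives $g = \widetilde{f}$. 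There is no real conceptual obstacle here; the one point to be careful about is to invoke the uniqueness (not merely the existence) clause of \textbf{Lemma \ref{decio}} at each inductive step, since it is this uniqueness that allows the two cocones to be identified level by level before passing to the colimit.
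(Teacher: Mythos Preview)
Your proposal is correct and follows essentially the same approach as the paper: build the cocone $(f_n)_{n\in\mathbb{N}}$ by iterated application of \textbf{Lemma \ref{decio}}, then invoke the universal property of the colimit. If anything, your treatment of uniqueness is more explicit than the paper's, which obtains uniqueness of $\widetilde{f}$ only relative to the full cocone and leaves implicit the step (which you spell out) that any $g$ with $g\circ\nu_0=f$ must satisfy $g\circ\nu_n=f_n$ for all $n$ by the uniqueness clause of \textbf{Lemma \ref{decio}}.
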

\begin{proof}
Given $f: A \to V$, by \textbf{Lemma \ref{decio}} there is a unique $f_1: A_1 \to V$ such that $f_1 \circ \nu_{0,1} = f$, that is, such that the diagram:

$$\xymatrixcolsep{3pc}\xymatrix{
A \ar[r]^{\nu_{0,1}} \ar[dr]_{f} & A_1 \ar@{-->}[d]^{\exists ! f_1}\\
  & V
}$$

commutes. By induction, one proves that for this fixed $f: A \to V$, for every $n \in \mathbb{N}$ there is a unique $f_n: A_n \to V$ such that

$$\xymatrixcolsep{3pc}\xymatrix{
A \ar[r]^{\nu_{0,n}} \ar[dr]_{f} & A_n \ar@{-->}[d]^{\exists ! f_n}\\
  & V
}$$

commutes, that is, such that  $f_n \circ \nu_{0,n} = f$, so the for every $n,m \in \mathbb{N}$ with $n<m$, the following diagram commutes:

$$\xymatrixcolsep{3pc}\xymatrix{
 & V & \\
A_n \ar[ur]^{f_n} \ar[rr]^{\nu_{n,m}} & & A_m \ar[ul]_{f_m}
}.$$

By the universal property of the colimit $A_{\omega}$, there is a unique $\mathcal{C}^{\infty}-$homomorphism:

$$\widehat{f}: A_{\omega} \rightarrow V$$

such that the following diagram commutes:

$$\xymatrixcolsep{3pc}\xymatrix{
 & V & \\
 & A_{\omega} \ar@{-->}[u]^{\exists ! \widetilde{f}}& \\
A_n \ar[ur]^{\nu_n} \ar@/^2pc/[ruu]^{f_n} \ar[rr]_{\nu_{n,m}}& & A_m \ar[ul]_{\nu_m}\ar@/_2pc/[uul]^{f_m}}$$

for every $n,m \in \mathbb{N}$ such that $n<m$. In particular, for $n=0$, we have the following commutative diagram:

$$\xymatrixcolsep{3pc}\xymatrix{
A \ar[r]^{\nu_{0}} \ar[dr]_{f} & A_{\omega} \ar[d]^{\widetilde{f}}\\
  & V
}$$

\end{proof}

\begin{remark}Whenever there is some $n \in \mathbb{N}$ such that $A_n$ is a von Neumann regular $\mathcal{C}^{\infty}-$ring, we have ${\rm vN}\,(A)=A_n$ and $\nu = \nu_{0,n}: A \to A_n$.
\end{remark}

\begin{theorem}\label{Tadios}The forgetful functor:
$$U: \mathcal{C}^{\infty}{\rm \bf vNRng} \rightarrow {\rm \bf Set}$$
has a left-adjoint, $L: {\rm \bf Set} \rightarrow \mathcal{C}^{\infty}{\rm \bf vNRng}$.
\end{theorem}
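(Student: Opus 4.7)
The plan is to exhibit the left adjoint as the composition of two already-known left adjoints. The forgetful functor factors as $U = S \circ \imath$, where $\imath : \mathcal{C}^{\infty}{\rm \bf vNRng} \hookrightarrow \mathcal{C}^{\infty}{\rm \bf Rng}$ is the full inclusion and $S : \mathcal{C}^{\infty}{\rm \bf Rng} \to {\rm \bf Set}$ is the underlying-set functor. Each factor admits a left adjoint, so by the composition of adjunctions the result will follow.

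For the outer factor, $\mathcal{C}^{\infty}{\rm \bf Rng}$ is the category of algebras of a many-sorted algebraic theory (cf.~\cite{BM1}), so $S$ has a left adjoint $L_0 : {\rm \bf Set} \to \mathcal{C}^{\infty}{\rm \bf Rng}$ sending a set $X$ to the free $\mathcal{C}^{\infty}$-ring on $X$; this is precisely the construction already used above to form $L(B)$ from the underlying set of a $\mathcal{C}^{\infty}$-ring $B$. For the inner factor, \textbf{Theorem \ref{bbis}} is exactly the statement that the assignment $A \mapsto A_{\omega}$ defines a left adjoint ${\rm vN} : \mathcal{C}^{\infty}{\rm \bf Rng} \to \mathcal{C}^{\infty}{\rm \bf vNRng}$ to $\imath$, via the universal property of the $\mathcal{C}^{\infty}$-von Neumann regular hull.

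Setting $L := {\rm vN} \circ L_0$, I compose the two adjunction bijections to obtain, for any set $X$ and any $B \in \mathcal{C}^{\infty}{\rm \bf vNRng}$, a natural isomorphism
$$\mathrm{Hom}_{\mathcal{C}^{\infty}{\rm \bf vNRng}}(L(X), B) \;\cong\; \mathrm{Hom}_{\mathcal{C}^{\infty}{\rm \bf Rng}}(L_0(X), \imath(B)) \;\cong\; \mathrm{Hom}_{{\rm \bf Set}}(X, U(B)),$$
with unit $X \to U(L(X))$ obtained by composing the units of the two component adjunctions. This establishes $L \dashv U$. No real obstacle is expected: the substantive content (existence of free $\mathcal{C}^{\infty}$-rings and of von Neumann regular hulls) has already been carried out, and the only remaining step is the formal observation that left adjoints compose.
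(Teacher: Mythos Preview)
Your proposal is correct and follows essentially the same route as the paper: factor $U$ as the inclusion $\imath:\mathcal{C}^{\infty}{\rm \bf vNRng}\hookrightarrow\mathcal{C}^{\infty}{\rm \bf Rng}$ followed by the underlying-set functor, invoke the free $\mathcal{C}^{\infty}$-ring functor as left adjoint to the latter and the von Neumann regular hull (\textbf{Theorem~\ref{bbis}}) as left adjoint to the former, and compose. The paper's proof cites \textbf{Proposition~\ref{proposition8}} for the reflection, but the substantive input is indeed the universal property established in \textbf{Theorem~\ref{bbis}}, exactly as you use it.
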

\begin{proof}
  By \textbf{Proposition \ref{proposition8}}, the inclusion functor:

  $$\iota: \mathcal{C}^{\infty}{\rm \bf vNRng} \rightarrow \mathcal{C}^{\infty}{\rm \bf Rng}$$

has a left adjoint, $\nu: \mathcal{C}^{\infty}{\rm \bf Rng} \rightarrow \mathcal{C}^{\infty}{\rm \bf vNRng}$. \\

By \textbf{Proposition 11}, p.47 of \cite{BM1}, the forgetful functor $U': \mathcal{C}^{\infty}{\rm \bf Rng} \rightarrow {\rm \bf Set}$ has a left adjoint, $L': {\rm \bf Set} \rightarrow \mathcal{C}^{\infty}{\rm \bf Rng}$. \\

Since $U$ is the composition of the forgetful functor $U': \mathcal{C}^{\infty}{\rm \bf vNRng} \rightarrow \mathcal{C}^{\infty}{\rm \bf Rng}$ with the inclusion $\iota: \mathcal{C}^{\infty}{\rm \bf Rng} \to {\rm \bf Set}$, that is, $U = U' \circ \iota$, it suffices to define $L:= \nu \circ L': {\rm \bf Set} \rightarrow \mathcal{C}^{\infty}{\rm \bf vNRng}$, so $L \dashv U$.

$$\xymatrixcolsep{3pc}\xymatrix{
\mathcal{C}^{\infty}{\rm \bf Rng} \ar@<1ex>[rr]^{\nu} \ar[dr]_{U'} & & \mathcal{C}^{\infty}{\rm \bf vNRng} \ar[ll]^{\iota} \ar[dl]^{U}\\
 & {\rm \bf Set} \ar@<-1ex>[ul]_{L'} \ar@<1ex>[ur]^{L}&
}$$
\end{proof}

\begin{remark}\label{description}Let $A$ and $B$ be two $\mathcal{C}^{\infty}-$rings, $f: A \to B$ a $\mathcal{C}^{\infty}-$homomorphism, $\p \in {\rm Spec}^{\infty}\,(A)$ and $\mathfrak{q} \in {\rm Spec}^{\infty}\,(B)$. \\

We write $\pi^{A}_{\p}: A \twoheadrightarrow \dfrac{A}{\p}$ for the quotient homomorphism and $\alpha^{A}_{\p}: A \to k_{\p}(A)$ for the composition $A \twoheadrightarrow \dfrac{A}{\p} \rightarrowtail k_{\p}(A)$, where $k_{\p}(A)$ is the residue field of the local ring $A\{ {A \setminus \p}^{-1}\}$.\\

Consider the quotient maps $\pi_{f^{*}(\mathfrak{q})}: A \twoheadrightarrow \frac{A}{f^{*}(\mathfrak{q})}$ and $\pi_{\mathfrak{q}}: B \twoheadrightarrow \frac{B}{\mathfrak{q}}$. Since $\ker (\pi_{\mathfrak{q}} \circ f) = f^{\dashv}[\pi_{\mathfrak{q}}^{\dashv}[\{ 0 + \mathfrak{q} \}]] = f^{\dashv}[\mathfrak{q}] = f^{*}(\mathfrak{q})$, by the \textbf{Theorem of the Isomorphism} there exists a unique $\overline{f_{\mathfrak{q}}}: \frac{A}{f^{*}(\mathfrak{q})} \to \frac{B}{\mathfrak{q}}$ such that the following diagram commutes:
$$\xymatrixcolsep{3pc}\xymatrix{
A \ar@{->>}[d]^{\pi_{f^{*}(\mathfrak{q})}} \ar@{->>}[r]^{\pi_{\mathfrak{q}}\circ f} & \dfrac{B}{\mathfrak{q}}\\
\dfrac{A}{f^{*}(\mathfrak{q})} \ar@{>->}[ur]^{\overline{f_{\mathfrak{q}}}} &
}$$

Moreover, $\overline{f_{\mathfrak{q}}}$ is a monomorphism, since $\ker \overline{f_{\mathfrak{q}}} = \{ 0 + f^{*}(\mathfrak{q})\}$. We call $\overline{f_{\mathfrak{q}}}$ ``the quotient monomorphism between the associated $\mathcal{C}^{\infty}-$domains''. \\

Consider the localization maps $\eta_{f^{*}(\mathfrak{q})}: A \to A\{ {A \setminus f^{*}(\mathfrak{q})}^{-1}\}$ and $\eta_{\mathfrak{q}}: B \to B\{ {B \setminus \mathfrak{q}}^{-1}\}$. Since $\eta_{\mathfrak{q}}[f[A\setminus f^{\star}(\mathfrak{q})]] \subseteq \eta_{\mathfrak{q}}[B \setminus \mathfrak{q}] \subseteq (B\{ B \setminus \mathfrak{q}\}^{-1})^{\times}$, by the universal property of the map $\eta_{f^{*}(\mathfrak{q})}: A \to A\{ {f^{*}(\mathfrak{q})}^{-1}\}$ there is a unique $\check{f_{\mathfrak{q}}}: A\{ {A \setminus f^{\star}(\mathfrak{q})}^{-1}\} \to B\{ {B \setminus \mathfrak{q}}^{-1}\}$ such that the following square commutes:
$$\xymatrixcolsep{3pc}\xymatrix{
A \ar[r]^{\eta_{f^{*}(\mathfrak{q})}} \ar[d]_{f} & A\{ {A \setminus f^{*}(\mathfrak{q})}^{-1} \} \ar@{.>}[d]^{\check{f_{\mathfrak{q}}}}\\
B \ar[r]^{\eta_{\mathfrak{q}}} & B\{ {B \setminus \mathfrak{q}}^{-1} \}
}$$

We call $\check{f_{\mathfrak{q}}}$ ``the canonical local $\mathcal{C}^{\infty}-$homomorphism between the associated local $\mathcal{C}^{\infty}-$rings''.\\

Let $k_{\mathfrak{q}}(B) = {\rm Res}\,\left( B\{ {B \setminus \mathfrak{q}}^{-1} \}\right)$ and $k_{f^{*}(\mathfrak{q})}(A) = {\rm Res}\, \left( A \{ {A \setminus f^{*}(\mathfrak{q})}^{-1}\}\right)$ and consider the canonical $\mathcal{C}^{\infty}-$homomorphisms $B\{ {B \setminus \mathfrak{q}}^{-1}\} \twoheadrightarrow k_{\mathfrak{q}}(B)$ and $A\{ {A \setminus f^{*}(\mathfrak{q})}^{-1}\} \twoheadrightarrow k_{f^{*}(\mathfrak{q})}(A)$. We have already seen that both $A \{ {A \setminus f^{*}(\mathfrak{q})}^{-1}\}$ and $B\{ {B \setminus \mathfrak{q}}^{-1} \}$ are local $\mathcal{C}^{\infty}-$rings, so let $\m_{f^{*}(\mathfrak{q})}$ and $\m_{\mathfrak{q}}$ be the (unique) maximal ideals of $A \{ {A \setminus f^{*}(\mathfrak{q})}^{-1}\}$ and $B\{ {B \setminus \mathfrak{q}}^{-1} \}$, respectively. We have:
$$k_{\mathfrak{q}}(B) = {\rm Res}\,\left( B\{ {B \setminus \mathfrak{q}}^{-1} \}\right) = \dfrac{B\{ {B \setminus \mathfrak{q}}^{-1} \}}{\m_{\mathfrak{q}}}$$
and
$$k_{f^{*}(\mathfrak{q})}(A) = {\rm Res}\, \left( A \{ {A \setminus f^{*}(\mathfrak{q})}^{-1}\}\right) = \dfrac{A \{ {A \setminus f^{*}(\mathfrak{q})}^{-1}\}}{\m_{f^{*}(\mathfrak{q})}}$$

By the \textbf{Theorem of the Homomorphism}, since $\check{f_{\mathfrak{q}}}^{\dashv}[\m_{\mathfrak{q}}] = \m_{f^{*}(\mathfrak{q})}$ (for $\m_{f^{*}(\mathfrak{q})}$ is a local $\mathcal{C}^{\infty}-$rings homomorphism), there is a unique $\mathcal{C}^{\infty}-$homomorphism $\widehat{f_{\mathfrak{q}}}: k_{f^{*}(\mathfrak{q})}(A) \to k_{\mathfrak{q}}(B)$ such that the following diagram commutes:
$$\xymatrixcolsep{3pc}\xymatrix{
A\{ {A \setminus f^{*}(\mathfrak{q})}^{-1}\} \ar[d]^{q_{f^{*}(\mathfrak{q})}} \ar[r]^{q_{\mathfrak{m}_{\mathfrak{q}}} \circ \check{f_{\mathfrak{q}}}} & \dfrac{B\{ {B \setminus \mathfrak{q}}^{-1}\}}{\mathfrak{m}_{\mathfrak{q}}}\\
\dfrac{A\{ {A \setminus f^{*}(\mathfrak{q})}^{-1}\}}{\mathfrak{m}_{f^{*}(\mathfrak{q})}} \ar@{.>}[ur]^{ \exists ! \widehat{f_{\mathfrak{q}}}}
}$$


 Thus, given a $\mathcal{C}^{\infty}-$homomorphism $f: A \to B$ we have a ``canonical monomorphism'' between the associated fields, that will be denoted by:
$$\widehat{f_{\mathfrak{q}}}: k_{f^{*}(\mathfrak{q})}(A) \rightarrow k_{\mathfrak{q}}(B).$$

 We will write:

 $$\widehat{f}: \prod_{\p \in {\rm Spec}^{\infty}\,(A)} k_{\p}(A) \to \prod_{\mathfrak{q} \in {\rm Spec}^{\infty}\,(B)} k_{\mathfrak{q}}(B)$$

for the unique $\mathcal{C}^{\infty}-$homomorphism such that for every $\mathfrak{q} \in {\rm Spec}^{\infty}\,(B)$ the following diagram commutes:

$$\xymatrixcolsep{3pc}\xymatrix{
\prod_{\p \in {\rm Spec}^{\infty}\,(A)} k_{\p}(A) \ar[r]^{\widehat{f}} \ar[d]_{{\rm proj}^{A}_{f^{*}(\mathfrak{q})}} & \prod_{\mathfrak{q} \in {\rm Spec}^{\infty}\,(B)} k_{\mathfrak{q}}(B) \ar[d]^{{\rm proj}^{B}_{\mathfrak{q}}}\\
k_{f^{*}(\mathfrak{q})}(A) \ar[r]^{\widehat{f_{\mathfrak{q}}}} & k_{\mathfrak{q}}(B)
}$$

\end{remark}



\begin{theorem}\label{11i}Let $A$ be a $\mathcal{C}^{\infty}-$ring and $(\nu_A, {\rm vN}\,(A))$ be its von Neumann-regular hull. Then:
\begin{itemize}
  \item[(i)]{$(\nu_A)^{*}: {\rm Spec}^{\infty}\,({\rm vN}\,(A)) \to {\rm Spec}^{\infty}\,(A)$ is a surjective spectral function;}
  \item[(ii)]{For any $\mathfrak{q} \in {\rm Spec}^{\infty}\,({\rm vN}\,(A))$, $\widehat{\nu_A}: k_{(\nu_A)^{*}(\mathfrak{q})}(A) \to k_{\mathfrak{q}}({\rm vN}\,(A))$ is a $C^\infty$-ring isomorphims.}
  \item[(iii)]{$(\nu_A)^{*}: {\rm Spec}^{\infty}\,({\rm vN}\,(A)) \to {\rm Spec}^{\infty-{\rm const}}\,(A)$ is a homeomorphism, where ${\rm Spec}^{\infty-{\rm const}}\,(A)$ is the boolean topological space on the set of prime $\mathcal{C}^{\infty}-$radical ideals of $A$ with the constructible topology.}
  \item[(iv)]{$\ker (\nu_A) = \sqrt[\infty]{(0)} = \bigcap {\rm Spec}^{\infty}\,(A)$. Thus $\nu_A$ is injective if and only if, $A$ is $\mathcal{C}^{\infty}-$reduced.}
\end{itemize}
\end{theorem}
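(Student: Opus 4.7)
The plan is to establish (iv) and (i) first, deduce (ii) from the explicit construction of ${\rm vN}(A)$, and derive (iii) from (ii) using a compactness argument.

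For (iv), the identity $\sqrt[\infty]{(0)} = \bigcap {\rm Spec}^{\infty}\,(A)$ is the characterization used in the proof of \textbf{Theorem \ref{ota}} (Theorem 18 of \cite{BM2}). To see $\ker \nu_A = \sqrt[\infty]{(0)}$, the inclusion $\sqrt[\infty]{(0)} \subseteq \ker \nu_A$ holds because $\mathcal{C}^{\infty}-$homomorphisms preserve the $\mathcal{C}^{\infty}-$nilradical (if $A\{a^{-1}\}\cong 0$, the induced map forces ${\rm vN}(A)\{\nu_A(a)^{-1}\} \cong 0$) and ${\rm vN}(A)$ is reduced by \textbf{Theorem \ref{backy}}. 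For the reverse inclusion, by \textbf{Proposition \ref{proposition6}} the product $P := \prod_{\p \in {\rm Spec}^{\infty}\,(A)} k_{\p}(A)$ is a von Neumann-regular $\mathcal{C}^{\infty}-$ring (being a limit of $\mathcal{C}^{\infty}-$fields), so the universal property of $\nu_A$ factors $\alpha = \prod_{\p} \alpha^{A}_{\p} : A \to P$ through $\nu_A$, yielding $\ker \nu_A \subseteq \ker \alpha = \bigcap_{\p} \p = \sqrt[\infty]{(0)}$.

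For (i), given $\p \in {\rm Spec}^{\infty}\,(A)$, the $\mathcal{C}^{\infty}-$field $k_{\p}(A)$ is von Neumann-regular, so $\alpha^{A}_{\p} : A \to k_{\p}(A)$ factors uniquely as $\widetilde{\alpha_{\p}} \circ \nu_A$. Setting $\mathfrak{q} := \ker \widetilde{\alpha_{\p}}$, the quotient ${\rm vN}(A)/\mathfrak{q}$ embeds into a $\mathcal{C}^{\infty}-$field, hence $\mathfrak{q}$ is a prime $\mathcal{C}^{\infty}-$radical ideal, and $(\nu_A)^{*}(\mathfrak{q}) = \nu_A^{-1}[\mathfrak{q}] = \ker \alpha^{A}_{\p} = \p$. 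Spectrality of $(\nu_A)^{*}$ is automatic since $(\nu_A)^{*-1}(D^{\infty}(a)) = D^{\infty}(\nu_A(a))$ is a basic quasi-compact open.

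For (ii), fix $\mathfrak{q} \in {\rm Spec}^{\infty}\,({\rm vN}(A))$ and $\p := (\nu_A)^{*}(\mathfrak{q})$. By \textbf{Corollary \ref{pablo}} applied to the von Neumann-regular ring ${\rm vN}(A)$, one has $k_{\mathfrak{q}}({\rm vN}(A)) \cong {\rm vN}(A)/\mathfrak{q}$. The induced map $\widehat{\nu_A} : k_{\p}(A) \to {\rm vN}(A)/\mathfrak{q}$ of $\mathcal{C}^{\infty}-$fields is automatically injective. For surjectivity, I would argue by induction on the filtration ${\rm vN}(A) = \varinjlim_n A_n$ from \textbf{Theorem \ref{bbis}}: classes of elements of $A_0 = A$ lie in the image via $A \to A/\p \hookrightarrow k_{\p}(A) \to {\rm vN}(A)/\mathfrak{q}$, and each new generator of $A_{n+1}$ over $A_n$ is a formal quasi-inverse $x_a$ of some $a \in A_n$; in the $\mathcal{C}^{\infty}-$field ${\rm vN}(A)/\mathfrak{q}$, the class of $x_a$ equals either the multiplicative inverse of the class of $a$ or zero, both of which already lie in the image of $k_{\p}(A)$. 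Closure under $\mathcal{C}^{\infty}-$operations is automatic because $\widehat{\nu_A}$ is a $\mathcal{C}^{\infty}-$homomorphism.

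For (iii), injectivity of $(\nu_A)^{*}$ follows from (ii) together with the uniqueness clause of the universal property: two primes $\mathfrak{q}_1, \mathfrak{q}_2$ lying over the same $\p$ would produce, via (ii), two factorizations of $\alpha^{A}_{\p}$ through $\nu_A$ into $k_{\p}(A) \cong {\rm vN}(A)/\mathfrak{q}_i$, which must coincide, forcing $\mathfrak{q}_1 = \mathfrak{q}_2$. Combined with (i), $(\nu_A)^{*}$ is a bijection. Continuity into ${\rm Spec}^{\infty-{\rm const}}\,(A)$ holds because basic constructible sets $D^{\infty}(a) \cap V^{\infty}(b)$ pull back to $D^{\infty}(\nu_A(a)) \cap V^{\infty}(\nu_A(b))$, which are clopen in ${\rm Spec}^{\infty}\,({\rm vN}(A))$ by \textbf{Theorem \ref{ota}}. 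Since the target is Boolean (constructible topology on a spectral space) hence Hausdorff, and the source is compact, a continuous bijection is automatically a homeomorphism. The main obstacle is the inductive argument for surjectivity in (ii), which requires a careful formal description of iterated quasi-inverses and the observation that in a $\mathcal{C}^{\infty}-$field every quasi-inverse specializes to a true inverse or to zero.
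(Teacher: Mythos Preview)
Your argument is correct, but it diverges from the paper's in two places worth noting.

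For (ii), you argue surjectivity of $\widehat{\nu_A}$ by induction along the filtration ${\rm vN}(A)=\varinjlim_n A_n$, tracking that each new quasi-inverse generator $x_a$ maps to either an honest inverse or to zero in the residue field. This works, since the image of $\widehat{\nu_A}$ is a $\mathcal{C}^{\infty}$-subring and $A_{n+1}$ is $\mathcal{C}^{\infty}$-generated over $A_n$ by the $x_a$'s. The paper avoids the filtration entirely: it observes that the canonical map $\alpha^{{\rm vN}(A)}_{\mathfrak q}:{\rm vN}(A)\to k_{\mathfrak q}({\rm vN}(A))$ is already surjective (because $\mathfrak q$ is maximal in the von Neumann-regular ring, so ${\rm vN}(A)/\mathfrak q\cong k_{\mathfrak q}({\rm vN}(A))$), and then checks via the universal property of $\nu_A$ that $\widehat{\nu_A}\circ\widetilde{\alpha}^A_{\nu_A^{*}(\mathfrak q)}=\alpha^{{\rm vN}(A)}_{\mathfrak q}$, which forces $\widehat{\nu_A}$ to be surjective. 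The paper's route is shorter and independent of the particular construction of ${\rm vN}(A)$; yours is more concrete but tied to the $\omega$-chain model.

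For (iv), you prove it first and directly, factoring the diagonal $\alpha:A\to\prod_{\mathfrak p}k_{\mathfrak p}(A)$ through $\nu_A$ to get $\ker\nu_A\subseteq\ker\alpha=\sqrt[\infty]{(0)}$. The paper instead deduces (iv) last, as a consequence of (i)--(iii): once $\widehat{\nu_A}$ is known to be an isomorphism on products of residue fields and $\delta_{{\rm vN}(A)}$ is injective, the identity $\widehat{\nu_A}\circ\delta_A=\delta_{{\rm vN}(A)}\circ\nu_A$ gives $\ker\nu_A=\ker\delta_A=\sqrt[\infty]{(0)}$. Your ordering is more self-contained; the paper's makes (iv) a formal corollary of the spectral picture.

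Your treatment of (i) and (iii) is essentially the same as the paper's: in (i) you both pull back the zero ideal along the extension $\widetilde{\alpha_{\mathfrak p}}$, and in (iii) the paper builds the explicit inverse $\gamma_A$ while you invoke the compact-to-Hausdorff argument, which amounts to the same thing.
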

\begin{proof}
Ad (i): Take $\mathfrak{p} \in {\rm Spec}^{\infty}\,(A)$ and consider the canonical $\mathcal{C}^{\infty}-$homomorphism $\alpha_{\mathfrak{p}}: A \to k_{\mathfrak{p}}(A)$, where $k_{\mathfrak{p}}(A)$ is a von Neumann-regular $\mathcal{C}^{\infty}-$ring, for it is a $\mathcal{C}^{\infty}-$field. By the universal property of $\nu_A : A \to {\rm vN}\,(A)$ we have a unique ``extension'' $\widetilde{\alpha}^{A}_{\mathfrak{p}} : {\rm vN}\,(A) \to k_{\mathfrak{p}}(A)$ such that:
$$\xymatrix{
A \ar[r]^{\nu_A} \ar[rd]_{\alpha^{A}_{\mathfrak{p}}} &  {\rm vN}\,(A) \ar[d]^{\widetilde{\alpha}^{A}_{\mathfrak{p}}}\\
    & k_{\mathfrak{p}}(A)}$$
commutes, \textit{i.e.}, $\widetilde{\alpha^{A}_{\mathfrak{p}}} \circ \nu_A = \alpha^{A}_{\mathfrak{p}}$. Now take $\gamma_A(\mathfrak{p}) := (\widetilde{\alpha^{A}_{\mathfrak{p}}}^{*})[\{ 0\}] \in {\rm Spec}^{\infty}\,({\rm vN}\,(A))$. In this way we have $(\nu_A)^{*}(\gamma_A(\mathfrak{p})) = (\nu_A)^{*}((\widetilde{\alpha_{\mathfrak{p}}})^{*})(\{ 0\}) = (\widetilde{\alpha_{\mathfrak{p}}}\circ \nu_A)^{*}(\{ 0\}) = \mathfrak{p}$, thus $(\nu_A)^{*} \circ \gamma_A = {\rm id}_{{\rm Spec}^{\infty}\,(A)}$ is surjective.\\

Ad (ii): Let $\mathfrak{q} \in {\rm Spec}^{\infty}\,({\rm vN}\,(A))$, by definition $\widehat{\nu_A}$ we have:
$$\alpha^{{\rm vN}\,(A)}_{\mathfrak{q}} \circ \nu_A = (\widehat{\nu_A})_{\mathfrak{q}} \circ \alpha^{A}_{\nu^{*}(\mathfrak{q})}$$
so we have the $\mathcal{C}^{\infty}-$field monomorphism:
$$\widehat{\eta_A}: k_{\nu^{*}_{A}(\mathfrak{q})}(A) \rightarrow k_{\mathfrak{q}}({\rm vN}\,(A)).$$

Let us prove that it is surjective: consider the extension of $\alpha^{A}_{\nu^{*}_{A}(\mathfrak{q})}$ to ${\rm vN}\,(A)$: $\widetilde{\alpha^{{\rm vN}\,(A)}_{\nu^{*}_{A}(\mathfrak{q})}} : {\rm vN}\,(A) \to k_{\nu^{*}_{A}(\mathfrak{q})}(A)$, then $\widehat{\nu_{A_{\mathfrak{q}}}} \circ \widetilde{\alpha}^{A}_{\nu^{*}_{A}(\mathfrak{q})} \circ \nu_A = \widehat{\nu_{A_{\mathfrak{q}}}} \circ \alpha^{A}_{\nu^{*}_{A}(\mathfrak{q})} = \alpha^{{\rm vN}\,(A)}_{\mathfrak{q}} \circ \nu_A$, thus $\widehat{A_{\mathfrak{q}}} \circ \widetilde{\alpha}^{A}_{\nu^{*}_{A}(\mathfrak{q})} = \alpha^{{\rm vN}\,(A)}_{\mathfrak{q}}$, by the universal property of $\nu_A$. But $\mathfrak{q} \in {\rm Spec}^{\infty}\,({\rm vN}\,(A))$ is a \textbf{maximal} ideal, so $\frac{{\rm vN}\,(A)}{\mathfrak{q}} \cong k_{\mathfrak{q}}({\rm vN}\,(A))$ and $\alpha^{{\rm vN}\,(A)}_{\mathfrak{q}}: {\rm vN}\,(A) \twoheadrightarrow \dfrac{{\rm vN}\,(A)}{\mathfrak{q}} \to k_{\mathfrak{q}}({\rm vN}\,(A))$ is \textbf{surjective}, therefore $\widehat{\nu_{A_{\mathfrak{q}}}}$ is surjective too, since $\widehat{\nu_{A_{\mathfrak{q}}}} \circ \widetilde{\alpha}^{A}_{\nu_A^{\star}(\mathfrak{q})} = \alpha^{{\rm vN}\,(A)}_{\mathfrak{q}}$.\\

Ad (iii): Since $(\nu_A)^{\star}$ is a map between Boolean spaces and it is a spectral map, in order to prove that $(\nu_A)^{\star}: {\rm Spec}^{\infty}\, ({\rm vN}\,(A)) \to {\rm Spec}^{\infty-{\rm const}}\,(A)$ is a homeomorphism, it is necessary and sufficient to prove that the spectral map $(\nu_A)^{\star}: {\rm Spec}^{\infty}\,({\rm vN}\,(A)) \to {\rm Spec}^{\infty}\,(A)$ is a bijection from the Boolean space ${\rm Spec}^{\infty}\,({\rm vN}\,(A))$ to the spectral space ${\rm Spec}^{\infty}\,(A)$. Keeping the notation in the proof of item (i), we will show that $\gamma_A$ is the inverse map of $\nu^{\star}_{A}$. By the proof of (i), it is enough to prove that $\gamma \circ (\nu_A)^{\star} = {\rm id}_{{\rm Spec}^{\infty}\,({\rm vN}\,(A))}$. Let $\mathfrak{q} \in {\rm Spec}^{\infty}\,({\rm vN}\,(A))$, then $\gamma_A(\nu^{\star}_{A}(\mathfrak{q})) = \ker (\widetilde{\alpha}_{\nu^{\star}_{A}(\mathfrak{q})}) = \ker (\alpha^{{\rm vN}\,(A)}_{\mathfrak{q}}) = \mathfrak{q}$, since $\widetilde{\nu_{A_{\mathfrak{q}}}}\circ \widetilde{\alpha}^{A}_{\nu^{\star}_{A}(\mathfrak{q})} = \alpha^{{\rm vN}\,(A)}_{\mathfrak{q}}$ and $\widehat{\nu_{A_{\mathfrak{q}}}}$ is injective.\\

Ad (iv): We will see that the result follows from the fact that ${\rm Spec}^{\infty}\,({\rm vN}\,(A))$ is homeomorphic to the booleanization of ${\rm Spec}^{\infty}\,(A)$. Take any $\mathcal{C}^{\infty}-$ring $B$ and consider the ``diagonal'' $\mathcal{C}^{\infty}-$homomorphism:
$$\delta_B := (\alpha^{B}_{\mathfrak{p}})_{\mathfrak{p} \in {\rm Spec}^{\infty}\,(B)}: B \to \prod_{\mathfrak{p} \in {\rm Spec}^{\infty}\,(B)} k_{\mathfrak{p}}(B).$$

From the equality of morphisms
\begin{multline*}(\alpha^{B}_{\mathfrak{p}})_{\mathfrak{p} \in {\rm Spec}^{\infty}\,(B)}: B \to \prod_{\mathfrak{p} \in {\rm Spec}^{\infty}\,(B)} k_{\mathfrak{p}}(B) = \\
= B \stackrel{(\pi^{B}_{\mathfrak{p}})_{\mathfrak{p} \in {\rm Spec}^{\infty}\,(B)}}{\longrightarrow} \prod_{\mathfrak{p} \in {\rm Spec}^{\infty}\,(B)} \dfrac{B}{\mathfrak{p}} \rightarrowtail \prod_{\mathfrak{p} \in {\rm Spec}^{\infty}\,(B)} k_{\mathfrak{p}}(B)
\end{multline*}

we have $\ker (\delta_B) = \bigcap {\rm Spec}^{\infty}\,(B) = \sqrt[\infty]{(0)}$.\\

In particular, for $B = {\rm vN}\,(A)$, we have $\ker (\delta_{{\rm vN}\,(A)}) = \sqrt[\infty]{(0)}=(0)$ and therefore obtain that $\delta_{{\rm vN}\,(A)}: {\rm vN}\,(A) \to \prod_{\mathfrak{q} \in {\rm Spec}^{\infty}\,({\rm vN}\,(A))}k_{\mathfrak{q}}({\rm vN}\,(A))$ is \textbf{injective}.\\

As $\nu^{\star}_{A}: {\rm Spec}^{\infty}\,({\rm vN}\,(A)) \to {\rm Spec}^{\infty}\,(A)$ is bijective we get that the arrow $\widehat{\nu_A}: \prod_{\mathfrak{p} \in {\rm Spec}^{\infty}\,(A)} k_{\mathfrak{p}}(A) \to \prod_{\mathfrak{q} \in {\rm Spec}^{\infty}\,({\rm vN}\,(A))} k_{\mathfrak{q}}({\rm vN}\,(A))$ is \textbf{isomorphic to} the $\mathcal{C}^{\infty}-$homomorphism
$$(\widehat{\nu_{A_{\mathfrak{q}}}})_{\mathfrak{q} \in {\rm Spec}^{\infty}\,({\rm vN}\,(A))}: \prod_{\mathfrak{q} \in {\rm Spec}^{\infty}\,({\rm vN}\,(A))} k_{\nu^{\star}_{A}(\mathfrak{q})}(A) \to \prod_{\mathfrak{q} \in {\rm Spec}^{\infty}\,({\rm vN}\,(A))} k_{\mathfrak{q}}({\rm vN}\,(A))$$. By the previous items, $(\widehat{\nu_{A_{\mathfrak{q}}}})_{\mathfrak{q} \in {\rm Spec}^{\infty}\,({\rm vN}\,(A))}$ is an isomorphism, thus $\widehat{\nu_A}$ is an isomorphism too. As $\widehat{\nu_A}\circ \delta_A = \delta_{{\rm vN}\,(A)} \circ \nu_A$ and $\widehat{\nu_A}, \delta_{{\rm vN}\,(A)}$ are injective, we have $\ker (\nu_A) = \ker (\nu_A \circ \delta_{{\rm vN}\,(A)}) = \ker (\widehat{\nu_A}\circ \delta_A) = \ker(\delta_A) = \sqrt[\infty]{(0)}$.

$$\xymatrix{
A \ar[r]^{\nu_A} \ar[rd]_{\alpha^{A}_{\mathfrak{p}}} & {\rm vN}\,(A) \ar[dr]^{\alpha^{{\rm vN}\,(A)}_{\mathfrak{q}}} \ar[d]_{\widetilde{\alpha}^{A}_{\mathfrak{p}}} &   \\
  & k_{\mathfrak{p}}(A) \ar[r]_{\widehat{\nu_{A_{\mathfrak{q}}}}} & k_{\mathfrak{q}}({\rm vN}\,(A))
}$$

where $\mathfrak{p}:= \nu^{\star}_{A}(\mathfrak{q})$.
\end{proof}


\begin{theorem}\label{proposition12i}Suppose that we have a functor $R: \mathcal{C}^{\infty}{\rm \bf Rng} \to \mathcal{C}^{\infty}{\rm \bf vNRng}$ and a natural transformation $(\eta_A)_{A \in {\rm Obj}\,(\mathcal{C}^{\infty}{\rm \bf Rng})} = (\eta_A : A \to R(A))_{A \in {\rm Obj}\,(\mathcal{C}^{\infty}{\rm \bf Rng})}$.\\
\begin{itemize}
  \item[(i)]{Suppose that the following condition is satisfied:\\

`` \textbf{(E)} For each von Neumann-regular $\mathcal{C}^{\infty}-$ring $V$ the arrow $\eta_V : V \to R(V)$ is a section (i.e., it has a left inverse).''\\

Then every $\mathcal{C}^{\infty}-$homomorphism $f: A \to V$ to a von Neumann-regular $\mathcal{C}^{\infty}-$ring factors through $\eta_A$.}
  \item[(ii)]{Suppose the following conditions are satisfied:\\

  ``\textbf{(U)} ${\eta_A}^{*}: {\rm Spec}^{\infty}\,(R(A)) \to {\rm Spec}^{\infty}\,(A)$ is a bijection, i.e., ${\rm Spec}^{\infty}\,(R(A))$ is homeomorphic to the booleanization of ${\rm Spec}^{\infty}\,(A)$.''\\

  ``\textbf{(U')} The stalk of the spectral sheaf of $R(A)$ at a prime ideal $\mathfrak{p}$ ``in $A$'' is isomorphic to $k_{\mathfrak{p}}(A)$, more precisely $\widehat{\eta_{A_{\mathfrak{q}}}}: k_{{\eta_A}^{*}(\mathfrak{q})}(A) \to k_{\mathfrak{q}}(R(A))$ is an isomorphism, $\mathfrak{q} \in {\rm Spec}^{\infty}\,(R(A))$.''\\

Then a $\mathcal{C}^{\infty}-$homomorphism $f:A \to V$ to a von Neumann-regular $\mathcal{C}^{\infty}-$ring $V$ admits at most one factorization through $\eta_A$.  }
 \end{itemize}

Thus, if all of \textbf{(E)}, \textbf{(U)} and \textbf{(U')} are satisfied, then the map $\eta_A$ is initial among maps to von Neumann-regular $\mathcal{C}^{\infty}-$rings, \textit{i.e.}, a $\mathcal{C}^{\infty}-$homomorphism $f:A \to V$ to a von Neumann-regular $\mathcal{C}^{\infty}-$ring factors uniquely through $\eta_A$.
\end{theorem}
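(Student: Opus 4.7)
For part (i), the plan is to use naturality of $\eta$ together with condition \textbf{(E)}. Given $f : A \to V$ with $V$ von Neumann-regular, naturality of $\eta$ gives the commutative square $R(f) \circ \eta_A = \eta_V \circ f$. By \textbf{(E)}, there is a retraction $s : R(V) \to V$ with $s \circ \eta_V = {\rm id}_V$. Setting $\tilde{f} := s \circ R(f)$, I obtain $\tilde{f} \circ \eta_A = s \circ R(f) \circ \eta_A = s \circ \eta_V \circ f = f$, as desired. This part is essentially formal.

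For part (ii), the plan is to compare two factorizations $g_1, g_2 : R(A) \to V$ of $f$ by passing to residue fields via the ``diagonal'' embedding of Remark \ref{description} and Theorem \ref{11i}(iv). The key observation is that condition \textbf{(U)} forces $g_1^{*}(\mathfrak{r}) = g_2^{*}(\mathfrak{r})$ for every $\mathfrak{r} \in {\rm Spec}^{\infty}(V)$: indeed $\eta_A^{*}(g_i^{*}(\mathfrak{r})) = (g_i \circ \eta_A)^{*}(\mathfrak{r}) = f^{*}(\mathfrak{r})$, and $\eta_A^{*}$ is bijective. Writing $\mathfrak{q}$ for this common prime, functoriality of the ``canonical field monomorphism'' construction $\widehat{\cdot}$ yields $\widehat{g_{i,\mathfrak{r}}} \circ \widehat{\eta_{A,\mathfrak{q}}} = \widehat{f_{\mathfrak{r}}}$, and by \textbf{(U')} the map $\widehat{\eta_{A,\mathfrak{q}}}$ is an isomorphism of $\mathcal{C}^{\infty}$-fields. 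So $\widehat{g_{1,\mathfrak{r}}} = \widehat{f_{\mathfrak{r}}} \circ \widehat{\eta_{A,\mathfrak{q}}}^{-1} = \widehat{g_{2,\mathfrak{r}}}$ for every $\mathfrak{r}$, giving $\widehat{g_1} = \widehat{g_2}$ at the level of products of residue fields.

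To conclude $g_1 = g_2$, I invoke the naturality square
$$\xymatrix{
R(A) \ar[r]^{g_i} \ar[d]_{\delta_{R(A)}} & V \ar[d]^{\delta_V} \\
\prod_{\mathfrak{q}} k_{\mathfrak{q}}(R(A)) \ar[r]^{\widehat{g_i}} & \prod_{\mathfrak{r}} k_{\mathfrak{r}}(V)
}$$
which gives $\delta_V \circ g_1 = \widehat{g_1} \circ \delta_{R(A)} = \widehat{g_2} \circ \delta_{R(A)} = \delta_V \circ g_2$. Since $V$ is von Neumann-regular, it is $\mathcal{C}^{\infty}$-reduced by Theorem \ref{backy}, so $\ker \delta_V = \sqrt[\infty]{(0)} = (0)$ and $\delta_V$ is injective; therefore $g_1 = g_2$.

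The main obstacle I anticipate is the bookkeeping around the ``canonical'' constructions $f \mapsto f^{*}, \check{f_{\mathfrak{q}}}, \widehat{f_{\mathfrak{q}}}$ from Remark \ref{description}: I need the functoriality identities $(g \circ h)^{*} = h^{*} \circ g^{*}$ and $\widehat{(g \circ h)_{\mathfrak{r}}} = \widehat{g_{\mathfrak{r}}} \circ \widehat{h_{g^{*}(\mathfrak{r})}}$, plus the naturality of $\delta$, to make the argument go through. These are routine but must be verified in the $\mathcal{C}^{\infty}$-setting; once they are in hand, the proof reduces to the two clean steps above, and the final statement (``$\eta_A$ is initial'') follows by combining the existence of (i) with the uniqueness of (ii).
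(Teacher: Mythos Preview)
Your proposal is correct and follows essentially the same approach as the paper: part (i) is the identical naturality-plus-retraction argument, and part (ii) is the same reduction to products of residue fields via the diagonal $\delta$ (the paper's $(\alpha^{V}_{\mathfrak{p}})_{\mathfrak{p}}$), using \textbf{(U)} and \textbf{(U')} to make $\widehat{\eta_A}$ an isomorphism and then injectivity of $\delta_V$ (since $V$ is $\mathcal{C}^{\infty}$-reduced) to cancel. Your explicit observation that $g_1^{*}(\mathfrak{r}) = g_2^{*}(\mathfrak{r})$ and your remark about the functoriality identities $\widehat{(g \circ h)_{\mathfrak{r}}} = \widehat{g_{\mathfrak{r}}} \circ \widehat{h_{g^{*}(\mathfrak{r})}}$ are exactly the bookkeeping the paper leaves implicit in its phrase ``both have the property which characterizes $\widehat{f}$''.
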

\begin{proof}
Ad (i): Let $V$ be a von Neumann-regular $\mathcal{C}^{\infty}-$ring and $f: A \to V$ a $\mathcal{C}^{\infty}-$homomorphism. As $(\eta_A)_{A \in {\rm Obj}\,(\mathcal{C}^{\infty}{\rm Rng})}$ is a natural tranformation, we have $R(f)\circ \eta_A = \eta_V \circ f$. By hypothesis \textbf{(E)} there is a $\mathcal{C}^{\infty}-$homomorphism $r_V: R(V) \to V$ such that $r_V \circ \eta_V = {\rm id}_V$. Now \textit{define} $F: R(A) \to V$ as the composition $F:= r_V \circ R(f)$, which makes the following diagram commute:
$$\xymatrix{
A \ar[d]^{\eta_A} \ar[r]^{f}  & V\\
R(A) \ar[ur]^{F} \ar[r]_{R(f)} & R(V) \ar[u]^{r_V}
}$$

Indeed,
$$F \circ \eta_A := (r_V \circ R(f)) \circ \eta_A = r_V \circ (R(f) \circ \eta_A) = r_V \circ (\eta_V \circ f) = (r_V \circ \eta_V) \circ f = {\rm id}_{V} \circ f = f,$$

as we claimed.\\

Ad (ii): Let $V$ be a von Neumann-regular $\mathcal{C}^{\infty}-$ring and let $F_0,F_1: R(A) \to V$ be two factorizations of $f$ through $\eta_A$, \textit{i.e.}, $\mathcal{C}^{\infty}-$homomorphisms such that $F_0 \circ \eta_A = f = F_1 \circ \eta_A$, or such that the following diagram commutes:
$$\xymatrix{  R(A) \ar@<2pt> [r]^{F_0}
\ar@<-2pt> [r]_{F_1} & V \\
A \ar[u]^{\eta_A} \ar[ur]_{f} &   }$$

Then we get $\widehat{F_0} \circ \widehat{\eta_A} = \widehat{f} = \widehat{F_1} \circ \widehat{\eta_A}$ (to see that just compose these $\mathcal{C}^{\infty}-$homo\-morphisms with ${\rm proj}^{V}_{\mathfrak{s}}, \mathfrak{s} \in {\rm Spec}^{\infty}\,(V)$ and note that both $\widehat{F_0}\circ \widehat{\eta_A}$ and $\widehat{F_1}\circ \widehat{\eta_A}$ have the property which characterizes $\widehat{f}$).\\

$$\xymatrixcolsep{3pc}\xymatrix{
\prod_{\mathfrak{p} \in {\rm Spec}^{\infty}\,(A)} k_{\mathfrak{p}}(A) \ar@/^2pc/[rr]^{\widehat{f}} \ar[d]^{{\rm proj}^{A}_{\mathfrak{p}}} \ar[r]^{\widehat{\eta_A}} & \prod_{\mathfrak{q} \in {\rm Spec}^{\infty}\,(R(A))} k_{\mathfrak{q}}(R(A)) \ar[d]^{{\rm proj}^{R(A)}_{\mathfrak{q}}} \ar@<2pt> [r]^{\widehat{F_0}}
\ar@<-2pt> [r]_{\widehat{F_1}} & \prod_{\mathfrak{s} \in {\rm Spec}^{\infty}\,(V)}k_{\mathfrak{s}}(V) \ar[d]^{{\rm proj}^{V}_{\mathfrak{s}}}\\
k_{f^{*}(\mathfrak{q})}(A) \ar@/_2pc/[rr]_{f} \ar[r]^{\eta_A} & k_{\mathfrak{q}}(R(A)) \ar@<2pt> [r]^{F_0}
\ar@<-2pt> [r]_{F_1} & k_{\mathfrak{s}}(V)
}$$

From the hypotheses \textbf{(U)} and \textbf{(U')} we obtain that the arrow:
$$\widehat{\eta_A} : \prod_{\mathfrak{p} \in {\rm Spec}^{\infty}\,(A)}k_{\mathfrak{p}}(A) \to \prod_{\mathfrak{q}\in {\rm Spec}^{\infty}\,(R(A))} k_{\mathfrak{q}}(R(A))$$
is an \textit{isomorphism} and therefore $\widehat{F_0} = \widehat{F_1}$. It follows that $(\alpha^{V}_{\mathfrak{p}})_{\mathfrak{p} \in {\rm Spec}^{\infty}\,(V)} \circ F_0 = \widehat{F_0} \circ (\alpha^{R(A)}_{\mathfrak{q}})_{\mathfrak{q} \in {\rm Spec}^{\infty}\,(R(A))} = \widehat{F_1} \circ (\alpha^{R(A)}_{\mathfrak{q}})_{\mathfrak{q} \in {\rm Spec}^{\infty}\,(R(A))} = (\alpha^{V}_{\mathfrak{p}})_{\mathfrak{p} \in {\rm Spec}^{\infty}\,(V)} \circ F_1.$\\

As $V$ is $\infty$-reduced, we get $\ker (\alpha^{V}_{\mathfrak{p}})_{\mathfrak{p} \in {\rm Spec}^{\infty}\,(V)} = \sqrt[\infty]{(0)} = (0)$, so $(\alpha^{V}_{\mathfrak{p}})_{\mathfrak{p}\in {\rm Spec}^{\infty}\,(V)}$ is injective and, by the commutativity of the following diagram, we can conclude that $F_0 = F_1$, proving the uniqueness of the extensions.\\

$$\xymatrixcolsep{8pc}\xymatrix{
R(A) \ar@<2pt>[d]^{F_0} \ar@<-2pt>[d]_{F_1} \ar[r]^{(\alpha^{R(A)}_{\mathfrak{q}})_{\mathfrak{q} \in {\rm Spec}^{\infty}\,(R(A))}} & \displaystyle\prod_{\mathfrak{q} \in {\rm Spec}^{\infty}\,(R(A))} k_{\mathfrak{q}}(R(A)) \ar@<2pt>[d]^{\widehat{F_0}} \ar@<-2pt>[d]_{\widehat{F_1}}\\
V \ar[r]^{(\alpha^{V}_{\mathfrak{p}})_{\mathfrak{p} \in {\rm Spec}^{\infty}\,(V)}} & \displaystyle\prod_{\mathfrak{s} \in {\rm Spec}^{\infty}\,(V)}k_{\mathfrak{s}}(V)} $$
\end{proof}

\subsection{Sheaves for the von Neumann regular hull of $\mathcal{C}^{\infty}-$ring}

In this subsection, we present another construction of the vN-hull of a $C^\infty$-ring by a sheaf-theoretic method. In fact, this alternative construction is useful to obtain a preservation result of the vN-hull functor (\textbf{Proposition \ref{prp29MA}}) and should be useful to proceed with other calculations.\\

The major part of the work here (that we will do in details) is: for each  $\mathcal{C}^{\infty}-$ring, $A$, we are going to build a presheaf $P_A$ on the basis of the topology of $({\rm Spec}^{\infty-{\rm const}}\,(A),{\rm Zar}^{\infty})$ such that  its stalks at each $\mathfrak{p} \in {\rm Spec}^{\infty}\,(A)$ are the $\mathcal{C}^{\infty}-$fields $k_{\mathfrak{p}}(A)$. \\

Consider ${\rm Spec}^{\infty}\,(A)$ together with the constructible topology, that we shall denote by ${\rm Spec}^{\infty-{\rm const}}\,(A)$. We are going to construct a sheaf over this space, starting by the basic open sets of the constructible topology.\\




Given any basic open set $V$ of ${\rm Spec}^{\infty-{\rm const}}\,(A)$, there are $a,b \in A$ such that $V = D^{\infty}(a)\cap Z^{\infty}(b)$. Given $\mathfrak{p} \in V = D^{\infty}(a)\cap Z^{\infty}(b)$, $b \in \mathfrak{p}$ and $a \notin \mathfrak{p}$, so
$$\sqrt[\infty]{(b)} = \bigcap \{ \mathfrak{p}' \in {\rm Spec}^{\infty}\,(A) | b \in \mathfrak{p}' \} \subseteq \mathfrak{p} = \sqrt[\infty]{\mathfrak{p}}.$$

Let $q_{b,\mathfrak{p}}: \dfrac{A}{\sqrt[\infty]{(b)}} \twoheadrightarrow \dfrac{A}{\mathfrak{p}}$ be the only $\mathcal{C}^{\infty}-$homomorphism such that:
$$\xymatrixcolsep{3pc}\xymatrix{
  & A \ar[dl]_{q_{\sqrt[\infty]{(b)}}} \ar[rd]^{q_{\mathfrak{p}}} &   \\
\dfrac{A}{\sqrt[\infty]{(b)}} \ar[rr]^{q_{b,\mathfrak{p}}} & & \dfrac{A}{\mathfrak{p}}
}$$

commutes. Consider the following diagram:

$$\xymatrixcolsep{3pc}\xymatrix{
  & \left(\frac{A}{\sqrt[\infty]{(b)}}\right) \ar[r]^{\eta_{a+\sqrt[\infty]{(b)}}} \ar[dd]^{q_{b,\mathfrak{p}}} & \left(\frac{A}{\sqrt[\infty]{(b)}}\right)\left\{{a + \sqrt[\infty]{(b)}}^{-1} \right\}\\
A \ar[ur]^{q_{\sqrt[\infty]{(b)}}} \ar[dr]_{q_{\mathfrak{p}}} & & \\
  & \left( \frac{A}{\mathfrak{p}}\right) \ar[r]^{\eta_{a+\mathfrak{p}}} & \left(\frac{A}{\mathfrak{p}}\right)\left\{{a+\mathfrak{p}}^{-1} \right\}
}$$

Note that $(\eta_{a+\mathfrak{p}} \circ q_{b,\mathfrak{p}})(a + \sqrt[\infty]{(b)}) = \eta_{a+\mathfrak{p}}(q_{b,\mathfrak{p}}(a+\sqrt[\infty]{(b)})) = \eta_{a+\mathfrak{p}}(a+\mathfrak{p}) \in \left( \dfrac{A}{\mathfrak{p}}\{ {a+\mathfrak{p}}^{-1}\}\right)^{\times}$, and due to the universal property of $\eta_{a+\sqrt[\infty]{(b)}}: \left(\frac{A}{\sqrt[\infty]{(b)}}\right) \to \left(\frac{A}{\sqrt[\infty]{(b)}}\right)\left\{ {a+\sqrt[\infty]{(b)}}^{-1}\right\}$ there is a unique $\mathcal{C}^{\infty}-$homomorphism $$\eta_{b,\mathfrak{p}}: \left(\frac{A}{\sqrt[\infty]{(b)}}\right)\left\{ {a + \sqrt[\infty]{(b)}}^{-1}\right\} \to \left(\frac{A}{\mathfrak{p}}\right)\left\{ {a+\mathfrak{p}}^{-1}\right\}$$ such that the following diagram commutes:
$$\xymatrixcolsep{3pc}\xymatrix{
\left(\frac{A}{\sqrt[\infty]{(b)}}\right) \ar[dr]_{\eta_{a+\mathfrak{p}}\circ q_{b,\mathfrak{p}}} \ar[r]^{\eta_{a+\sqrt[\infty]{(b)}}} & \left(\frac{A}{\sqrt[\infty]{(b)}}\right)\left\{{a+\sqrt[\infty]{(b)}}^{-1} \right\} \ar[d]^{\eta_{b,\mathfrak{p}}}\\
  & \left(\frac{A}{\mathfrak{p}}\right)\left\{ {a+\mathfrak{p}}^{-1}\right\}
}$$

 We now have the following commutative diagram:

$$\xymatrixcolsep{3pc}\xymatrix{
  & \left(\frac{A}{\sqrt[\infty]{(b)}}\right) \ar[r]^{\eta_{a+\sqrt[\infty]{(b)}}} \ar@{->>}[dd]^{q_{b,\mathfrak{p}}} & \left(\frac{A}{\sqrt[\infty]{(b)}}\right)\left\{{a + \sqrt[\infty]{(b)}}^{-1} \right\} \ar@{->>}[dd]^{\eta_{b,\mathfrak{p}}}  \\
A \ar[ur]^{q_{\sqrt[\infty]{(b)}}} \ar[dr]_{q_{\mathfrak{p}}} & & \\
  & \left(\frac{A}{\mathfrak{p}}\right) \ar[r]^{\eta_{a+\mathfrak{p}}} & \left(\frac{A}{\mathfrak{p}}\right)\left\{{a+\mathfrak{p}}^{-1} \right\} }$$

where $\eta_{b,\mathfrak{p}}$ is surjective due to the following:\\

Since $b \in \mathfrak{p}$, $\sqrt[\infty]{(b)} \subseteq \mathfrak{p}$ so $q_{b,\mathfrak{p}}: \left(\frac{A}{\sqrt[\infty]{(b)}}\right) \to \left(\frac{A}{\mathfrak{p}}\right)$ is surjective and $\langle \eta_a[\sqrt[\infty]{(b)}]\rangle \subseteq \langle \eta_a[\mathfrak{p}]\rangle$. Also, by \textbf{Corollary 4}, p.62 of \cite{BM2}, rings of fractions commute with quotients, so
$$\begin{cases}
    \left(\frac{A}{\sqrt[\infty]{(b)}}\right)\{ {a + \sqrt[\infty]{(b)}}^{-1} \} \stackrel{\mu_{a,b}}{\cong} \left(\frac{A\{ a^{-1}\}}{\langle \eta_a[\sqrt[\infty]{(b)}] \rangle} \right)\\
    \left(\frac{A}{\mathfrak{p}}\right)\left\{ {a+\mathfrak{p}}^{-1}\right\} \stackrel{\mu_{a,\mathfrak{p}}}{\cong} \left(\frac{A\{ a^{-1}\}}{\eta_a[\mathfrak{p}]}\right)
  \end{cases}$$

and we have the following commutative diagram:

$$\xymatrixcolsep{3pc}\xymatrix{
\left(\frac{A}{\sqrt[\infty]{(b)}}\right)\left\{{a+\sqrt[\infty]{(b)}}^{-1} \right\} \ar[dd]^{\eta_{b,\mathfrak{p}}} \ar[r]^{\mu_{a,b}}_{\cong} & \dfrac{A\{ a^{-1}\}}{\langle \eta_a[\sqrt[\infty]{(b)}]\rangle} \ar@{->>}[dd]^{\mathfrak{p}i_{b,\mathfrak{p}}} &   \\
  &  &  A\{ a^{-1}\} \ar[ul]_{q_{\eta_a[\sqrt[\infty]{(b)}]}} \ar[dl]^{q_{\eta_a[\mathfrak{p}]}} \\
\left(\frac{A}{\mathfrak{p}}\right)\left\{ {a+\mathfrak{p}}^{-1} \right\} \ar[r]^{\mu_{a,\mathfrak{p}}}_{\cong} & \dfrac{A\{ a^{-1}\}}{\langle \eta_a[\mathfrak{p}]\rangle}&  }$$
where $\mu_{a,b}$ and $\mu_{a,\mathfrak{p}}$ are the isomorphisms described in \textbf{Corollary 4}, p.62 of \cite{BM2}. We have:
$$\eta_{b,\mathfrak{p}} = (\mu_{a,\mathfrak{p}})^{-1} \circ \mathfrak{p}i_{b,\mathfrak{p}}\circ \mu_{a,b},$$
so $\eta_{b,\mathfrak{p}}$ is a composition of surjective maps, hence it is a surjective map.\\

Considering the map:
$$\eta_{\frac{A}{\mathfrak{p}}\setminus \{ 0 + \mathfrak{p}\}}: \dfrac{A}{\mathfrak{p}}\{ {a+\mathfrak{p}}^{-1}\} \rightarrow \dfrac{A}{\mathfrak{p}}\lbrace{ {\dfrac{A}{\mathfrak{p}}\setminus \{ 0 +\mathfrak{p}\}}^{-1}\rbrace} = k_{\mathfrak{p}}(A)$$

we define $f^{a,b}_{\mathfrak{p}} := \eta_{\frac{A}{\mathfrak{p}}\setminus \{ 0 + \mathfrak{p}\}} \circ \eta_{b,\mathfrak{p}}: \dfrac{A}{\sqrt[\infty]{(b)}}\{ {a + \sqrt[\infty]{(b)}}^{-1}\} \to k_{\mathfrak{p}}(A)$, \textit{i.e.}, such that the following diagram commutes:
$$\xymatrixcolsep{3pc}\xymatrix{
\dfrac{A}{\sqrt[\infty]{(b)}}\{ {a + \sqrt[\infty]{(b)}}^{-1} \} \ar[d]^{\eta_{b,\mathfrak{p}}} \ar[rd]^{f^{a,b}_{\mathfrak{p}}}\\
\dfrac{A}{\mathfrak{p}}\{ {a+\mathfrak{p}}^{-1}\} \ar[r]^{\eta_{\frac{A}{\mathfrak{p}}\setminus \{ 0 + \mathfrak{p}\}}}& k_{\mathfrak{p}}(A)
}$$

Let $t_{a,b} := \eta_{a+\sqrt[\infty]{(b)}} \circ q_{\sqrt[\infty]{(b)}}: A \to \dfrac{A}{\sqrt[\infty]{(b)}}\{ a + \sqrt[\infty]{(b)}\}$ and define:
$$t_{\mathfrak{p}}:= \eta_{\frac{A}{\mathfrak{p}}\setminus \{ 0 + \mathfrak{p}\}} \circ \eta^{a+\mathfrak{p}}_{\frac{A}{\mathfrak{p}}} \circ q_{\mathfrak{p}}: A \to k_{\mathfrak{p}}(A),$$

$$\xymatrixcolsep{3pc}\xymatrix{
\dfrac{A}{\mathfrak{p}} \ar[r]^{\eta_{a+\mathfrak{p}}} & \dfrac{A}{\mathfrak{p}}\{ {a+\mathfrak{p}}^{-1}\} \ar[d]^{\eta_{\frac{A}{\mathfrak{p}} \setminus \{ 0 + \mathfrak{p}\}}}\\
A \ar@{->>}[u]_{q_{\mathfrak{p}}} \ar[r]^{t_{\mathfrak{p}}} & k_{\mathfrak{p}}(A)
}$$

where $\eta_{a+\mathfrak{p}}: \dfrac{A}{\mathfrak{p}} \to \dfrac{A}{\mathfrak{p}}\{ {a+\mathfrak{p}}^{-1}\}$ is the ring of fractions of $\dfrac{A}{\mathfrak{p}}$ with respect to $a + \mathfrak{p}$.\\

We claim that $f^{a,b}_{\mathfrak{p}}: \left(\frac{A}{\sqrt[\infty]{(b)}}\right) \to \left(\frac{A}{\mathfrak{p}}\right)\lbrace \frac{A}{\mathfrak{p}} \setminus \left\{ 0 + \mathfrak{p}\right\}^{-1}\rbrace$ is the unique $\mathcal{C}^{\infty}-$homo\-morphism such that the triangle:

$$\xymatrix{
A\ar[r]^{t_{a,b}} \ar[dr]^{t_{\mathfrak{p}}} & \frac{A}{\sqrt[\infty]{(b)}}\{ {a + \sqrt[\infty]{(b)}}^{-1}\} \ar[d]^{f^{a,b}_{\mathfrak{p}}}\\
   & \left(\frac{A}{\mathfrak{p}}\right)\lbrace \frac{A}{\mathfrak{p}} \setminus \{ 0 + \mathfrak{p}\}\rbrace
}$$
commutes.\\

Let $g: \frac{A}{\sqrt[\infty]{(b)}} \to \left(\frac{A}{\mathfrak{p}}\right)\left\{ \frac{A}{\mathfrak{p}} \setminus \{ 0 + \mathfrak{p}\}^{-1}\right\}$ be a $\mathcal{C}^{\infty}-$homomorphism  such that the following triangle commutes:

$$\xymatrix{
A \ar[r]^{t_{a,b}} \ar[dr]^{t_{\mathfrak{p}}} & \left(\frac{A}{\sqrt[\infty]{(b)}}\right)\{ {a + \sqrt[\infty]{(b)}}^{-1}\} \ar[d]^{g}\\
   & \dfrac{A}{\mathfrak{p}}\lbrace \dfrac{A}{\mathfrak{p}} \setminus \{ 0 + \mathfrak{p}\}\rbrace
}$$

that is, such that $g \circ t_{a,b} = t_{\mathfrak{p}}$. We have:
$$g \circ \eta_{\frac{A}{\sqrt[\infty]{(b)}}} \circ q_{b} = t_{\mathfrak{p}} = \eta_{\frac{A}{\mathfrak{p}}\setminus \{ 0 + \mathfrak{p}\}} \circ q^{b}_{\mathfrak{p}} \circ q_b.$$

Since $q_b$ is an epimorphism, it follows that:
$$g \circ \eta_{\frac{A}{\sqrt[\infty]{(b)}}} = \eta_{\frac{A}{\mathfrak{p}} \setminus \{ 0 + \mathfrak{p}\}} \circ q_{b,\mathfrak{p}}$$
and
$$\eta_{\frac{A}{\mathfrak{p}} \setminus \{ 0 + \mathfrak{p}\}} \circ q_{b,\mathfrak{p}} = \eta_{a+\mathfrak{p}, \frac{A}{\mathfrak{p}}\setminus \{ 0 + \mathfrak{p} \}} \circ \eta_{b,\mathfrak{p}} \circ \eta_{\frac{A}{\sqrt[\infty]{(b)}}},$$

where:
$$\eta_{a+\mathfrak{p}, \frac{A}{\mathfrak{p}}\setminus \{ 0 + \mathfrak{p} \}} : \dfrac{A}{\mathfrak{p}}\{{a+\mathfrak{p}}^{-1} \} \rightarrow k_{\mathfrak{p}}(A.)$$

By the universal property of $\eta_{\frac{A}{\sqrt[\infty]{(b)}}}$, it follows that $g = \eta_{a+\mathfrak{p}, \frac{A}{\mathfrak{p}}\setminus \{ 0 + \mathfrak{p} \}}$, hence $g = f^{a,b}_{\mathfrak{p}}$ and $f^{a,b}_{\mathfrak{p}}$ is unique.\\

We have now the following commutative diagram:

$$\xymatrixcolsep{3pc}\xymatrix{
  & \left(\frac{A}{\sqrt[\infty]{(b)}}\right) \ar[r]^{\eta_{a+\sqrt[\infty]{(b)}}} \ar@{->>}[dd]^{q_{b,\mathfrak{p}}} & \left(\frac{A}{\sqrt[\infty]{(b)}}\right)\left\{{a + \sqrt[\infty]{(b)}}^{-1} \right\} \ar@{->>}[dd]^{\eta_{b,\mathfrak{p}}} \ar[ddr]^{f^{a,b}_{\mathfrak{p}}} & \\
A \ar[ur]^{q_{\sqrt[\infty]{(b)}}} \ar[dr]_{q_{\mathfrak{p}}} & & &\\
  & \left(\frac{A}{\mathfrak{p}}\right) \ar[r]^{\eta_{a+\mathfrak{p}}} & \left(\frac{A}{\mathfrak{p}}\right)\left\{{a+\mathfrak{p}}^{-1} \right\} \ar[r]^{\eta_{\frac{A}{\mathfrak{p}}\setminus\{ 0 + \mathfrak{p}\}}} & \mathop{\rm colim}_{a \notin \mathfrak{p}} \left(\frac{A}{\mathfrak{p}}\right)\left\{ {a+\mathfrak{p}}^{-1}\right\}
}$$

\begin{remark}\label{Encarnacao1}
Since $\sqrt[\infty]{(b)} \subseteq \mathfrak{p}$, it follows that $q_{b,\mathfrak{p}}$ is surjective and since $\bigcup {\rm im}\, \jmath_a = \mathop{\rm colim}_{a \notin \mathfrak{p}} \frac{A}{\mathfrak{p}}\{ {a+\mathfrak{p}}^{-1}\}$ (because it is a colimit and the right triangle of the above diagram commutes), it will follow that $\bigcup {\rm im}\, f^{a,b}_{\mathfrak{p}} = \mathop{\rm colim}_{a \notin \mathfrak{p}} \frac{A}{\mathfrak{p}}\{ {a+\mathfrak{p}}^{-1}\}$. Hence

$$\left\{f^{a,b}_{\mathfrak{p}}: \left( \frac{A}{\sqrt[\infty]{(b)}}\right)\{ {a+\sqrt[\infty]{(b)}}^{-1}\} \rightarrow \mathop{\rm colim}_{a \notin \mathfrak{p}} \frac{A}{\mathfrak{p}}\{ {a+\mathfrak{p}}^{-1}\} | \mathfrak{p} \in D^{\infty}(a)\cap Z^{\infty}(b) \right\}$$
 is collectively surjective;
\end{remark}


Let $a,a',b,b' \in A$ be such that $D^{\infty}(a)\cap Z^{\infty}(b) \subseteq D^{\infty}(a')\cap Z^{\infty}(b')$. Then
 \begin{equation}\label{I}D^{\infty}(a)\cap Z^{\infty}(b) \subseteq Z^{\infty}(b')\end{equation}
  and
 \begin{equation}\label{II}D^{\infty}(a) \cap Z^{\infty}(b) \subseteq D^{\infty}(a') \end{equation}

We claim that \eqref{I} implies that $\sqrt[\infty]{(b')} \subseteq \ker (t_{a,b})$.\\

It suffices to show that:

$$\bigcap D^{\infty}(a)\cap Z^{\infty}(b) \subseteq \ker (t_{a,b}),$$
for
$$ \sqrt[\infty]{(b')} = \bigcap Z^{\infty}(b') \subseteq \bigcap D^{\infty}(a)\cap Z^{\infty}(b).$$

We have:
$$\bigcap D^{\infty}(a)\cap Z^{\infty}(b) = \{ x \in A | (\forall \mathfrak{p} \in {\rm Spec}^{\infty}\,(A))(a \notin \mathfrak{p}) \wedge (b \in \mathfrak{p}) \to (x \in \mathfrak{p})\}$$

$$t_{a,b}(x) = \eta^{a+\sqrt[\infty]{(b)}}_{\frac{A}{\sqrt[\infty]{(b)}}}(q_{b}(x)) = 0 \iff (\exists \theta \in \{ a+\sqrt[\infty]{(b)}\}^{\infty-{\rm sat}})(\theta \cdot q_b(x) = 0 \,\, {\rm in}\,\, \dfrac{A}{\sqrt[\infty]{(b)}})$$

Claim: $x \in \bigcap D^{\infty}(a)\cap Z^{\infty}(b) \rightarrow a \cdot x \in \sqrt[\infty]{(b)}$.\\

\textit{Ab absurdo}, suppose $b \in \mathfrak{p}$ and $a \cdot x \notin \sqrt[\infty]{(b)}$. Since $\sqrt[\infty]{(b)} = \bigcap Z^{\infty}(b)$, there must exist some $\mathfrak{p} \in Z^{\infty}(b)$ such that $a \cdot x \in \mathfrak{p}$. Since $\mathfrak{p}$ is a prime ideal, it follows that $a \notin \mathfrak{p}$ and $x \notin \mathfrak{p}$.\\

Thus we have a contradiction between $(x \in \bigcap D^{\infty}(a)\cap Z^{\infty}(b))$ and $(\exists \mathfrak{p} \in {\rm Spec}^{\infty}\,(A))((a \notin \mathfrak{p})\wedge (b \in \mathfrak{p}) \wedge (x \notin \mathfrak{p}))$.\\

Now,
$$t_{a,b}(x) = \eta_{\frac{A}{\sqrt[\infty]{(b)}}}(q_b(x)) \cdot \underbrace{\eta_{\frac{A}{\sqrt[\infty]{(b)}}}(q_{b}(a))}_{\in \left( \dfrac{A}{\sqrt[\infty]{(b)}}\{ {a + \sqrt[\infty]{(b)}}^{-1}\}\right)^{\times}} = \eta^{a+\sqrt[\infty]{(b)}}_{\frac{A}{\sqrt[\infty]{(b)}}}(q_b(a \cdot x)) = \eta_{\frac{A}{\sqrt[\infty]{(b)}}}(0)=0$$
so $x \in \bigcap D^{\infty}(a)\cap Z^{\infty}(b)$ implies $x \in \ker (t_{a,b})$, \textit{i.e.},
$$\bigcap D^{\infty}(a)\cap Z^{\infty}(b) \subseteq \ker (t_{a,b}),$$
hence
$$\sqrt[\infty]{(b')} \subseteq \bigcap D^{\infty}(a)\cap Z^{\infty}(b) \subseteq \ker (t_{a,b})$$
$$\sqrt[\infty]{(b')} \subseteq \ker (t_{a,b}).$$

By the \textbf{Theorem of the $\mathcal{C}^{\infty}-$Homomorphism}, there is a unique $\mathcal{C}^{\infty}-$homomorphism   $r^{b'}_{a,b}: \dfrac{A}{\sqrt[\infty]{(b')}} \to \dfrac{A}{\sqrt[\infty]{(b)}}\{ {a + \sqrt[\infty]{(b)}}^{-1}\}$ such that $r^{b'}_{a,b} \circ q_{\sqrt[\infty]{(b')}} = t_{a,b}$, that is, such that the following diagram commutes:

$$\xymatrixcolsep{3pc}\xymatrix{
A \ar[d]_{q_{b'}} \ar[r]^{t_{a,b}} & \dfrac{A}{\sqrt[\infty]{(b)}}\{ {a+\sqrt[\infty]{(b)}}^{-1} \}\\
\dfrac{A}{\sqrt[\infty]{(b')}} \ar[ur]_{r^{b'}_{a,b}}
}$$

Now we claim that there is a unique $\mathcal{C}^{\infty}-$homomorphism $t^{a',b'}_{a,b}: \dfrac{A}{\sqrt[\infty]{(b')}}\{ {a' + \sqrt[\infty]{(b')}}^{-1}\} \to \dfrac{A}{\sqrt[\infty]{(b)}}\{ {a + \sqrt[\infty]{(b)}}^{-1}\}$ such that the following diagram commutes:

$$\xymatrixcolsep{3pc}\xymatrix{
  & \dfrac{A}{\sqrt[\infty]{(b)}} \ar[r]^{\eta_{a+\sqrt[\infty]{(b)}}} & \dfrac{A}{\sqrt[\infty]{(b)}}\{ {a+\sqrt[\infty]{(b)}}^{-1}\}\\
A \ar[ur]^{q_b} \ar[dr]_{q_{b'}} \ar@/_5pc/[drr]_{t_{a',b'}} \ar@/^5pc/[urr]^{t_{a,b}} & & \\
  & \dfrac{A}{\sqrt[\infty]{(b')}} \ar[r]_{\eta_{a'+\sqrt[\infty]{(b')}}} \ar[uur]^{r^{b'}_{a,b}} & \dfrac{A}{\sqrt[\infty]{(b')}}\{ {a'+\sqrt[\infty]{(b')}}\} \ar[uu]^{ \exists ! t^{a',b'}_{a,b}}   }$$

We claim that \eqref{II} implies that $r^{b'}_{a,b}(a'+\sqrt[\infty]{(b')}) \in \left( \dfrac{A}{\sqrt[\infty]{(b)}} \{ {a + \sqrt[\infty]{(b)}}\}\right)^{\times}$. This is equivalent to assert that $t_{a,b}(a') \in \left( \dfrac{A}{\sqrt[\infty]{(b)}} \{ {a + \sqrt[\infty]{(b)}}\}\right)^{\times}$ or that $q_b(a') \in \{ a + \sqrt[\infty]{(b)}\}^{\infty-{\rm sat}}$.\\

\textit{Ab absurdo}, suppose that $q_b(a') \notin \{ a + \sqrt[\infty]{(b)}\}^{\infty-{\rm sat}} \subseteq \dfrac{A}{\sqrt[\infty]{(b)}}$. By \textbf{Theorem 21} of page 89 of \cite{BM2}, there is some $\mathfrak{P} \in {\rm Spec}^{\infty}\,\left(\dfrac{A}{\sqrt[\infty]{(b)}}\right)$ ($\sqrt[\infty]{(b)} \subseteq \mathfrak{P}$) such that $q_{b}(a') \in \mathfrak{P}$ and $a+\sqrt[\infty]{(b)} \notin \mathfrak{P}$. Since ${\rm Spec}^{\infty}\, \left( \dfrac{A}{\sqrt[\infty]{(b)}}\right) \cong {\rm Spec}^{\infty}_{\sqrt[\infty]{(b)}}\,(A) =\{ \mathfrak{s} \in {\rm Spec}^{\infty}\,(A) | \sqrt[\infty]{(b)} \subseteq \mathfrak{s}\}$, there is a unique $\mathfrak{p} \in {\rm Spec}^{\infty}_{\sqrt[\infty]{(b)}}\,(A)$ such that $\mathfrak{P} = \mathfrak{p} + \sqrt[\infty]{(b)}$ and $\mathfrak{p} \in Z^{\infty}(b)$ (because $b \in \sqrt[\infty]{(b)} \subseteq \mathfrak{P} = \mathfrak{p} + \sqrt[\infty]{(b)}$).\\

Since $q_b(a') \in \mathfrak{P} = \mathfrak{p} + \sqrt[\infty]{(b)}$ and $a+\sqrt[\infty]{(b)} \notin \mathfrak{P} = \mathfrak{p} + \sqrt[\infty]{(b)}$, it follows that $a' \in \mathfrak{p}$ ($\mathfrak{p} \in Z^{\infty}(a')$) and $a \notin \mathfrak{p}$ ($\mathfrak{p} \in D^{\infty}(a)$). Since $b \in \mathfrak{p}$ ($\mathfrak{p} \in Z^{\infty}(b)$), it follows that:
$$\mathfrak{p} \in D^{\infty}(a)\cap Z^{\infty}(b) \cap Z^{\infty}(a'),$$
hence
$$ D^{\infty}(a)\cap Z^{\infty}(b) \cap Z^{\infty}(a') \neq \varnothing$$
$$ D^{\infty}(a)\cap Z^{\infty}(b) \nsubseteq D^{\infty}(a'),$$
which contradicts \eqref{II}.\\

Hence:
$$q_b(a') \in \{ a + \sqrt[\infty]{(b)}\}^{\infty-{\rm sat}}$$

and by the universal property of $\eta_{a'+\sqrt[\infty]{(b')}}: \dfrac{A}{\sqrt[\infty]{(b')}} \rightarrow \dfrac{A}{\sqrt[\infty]{(b')}} \{ {a'+\sqrt[\infty]{(b')}}^{-1}\} $, there exists a unique $\mathcal{C}^{\infty}-$homomorphism $t^{a',b'}_{a,b} :  \dfrac{A}{\sqrt[\infty]{(b')}} \{ {a'+\sqrt[\infty]{(b')}}^{-1}\} \rightarrow \dfrac{A}{\sqrt[\infty]{(b)}} \{ {a+\sqrt[\infty]{(b)}}^{-1}\}$ such that the following triangle commutes:
$$\xymatrixcolsep{3pc}\xymatrix{
  & \dfrac{A}{\sqrt[\infty]{(b)}} \{ {a+\sqrt[\infty]{(b)}}^{-1}\}\\
\dfrac{A}{\sqrt[\infty]{(b')}} \ar[ur]^{r^{b'}_{a,b}} \ar[r]^{\eta_{a'+\sqrt[\infty]{(b')}}} & \dfrac{A}{\sqrt[\infty]{(b')}} \{ {a'+\sqrt[\infty]{(b')}}^{-1}\} \ar[u]^{t^{a',b'}_{a,b}}}$$

Now we claim that the following diagram is a colimit:

$$\xymatrixcolsep{3pc}\xymatrix{
  & \left( \frac{A}{\mathfrak{p}}\right)\left\{ \frac{A}{\mathfrak{p}} \setminus \{ 0+\mathfrak{p}\}^{-1}\right\} &   \\
\left( \frac{A}{\sqrt[\infty]{(b')}}\right)\{ {a'+\sqrt[\infty]{(b')}}^{-1} \} \ar@/^2pc/[ur]^{f^{a,b}_{\mathfrak{p}}} \ar[rr]^{t^{a',b'}_{a,b}} & & \left( \frac{A}{\sqrt[\infty]{(b)}}\right)\{ {a + \sqrt[\infty]{(b)}}^{-1}\} \ar@/_2pc/[ul]_{f^{a,b}_{\mathfrak{p}}}
}$$

In order to show that, three conditions must hold:

\begin{itemize}
  \item[1)]{For every $a,b,a',b'$ such that $\mathfrak{p} \in D^{\infty}(a)\cap Z^{\infty}(b)$ and $\mathfrak{p} \in D^{\infty}(a')\cap Z^{\infty}(b')$ the above diagram commutes;}
  \item[2)]{The family $\{ f^{a,b}_{\mathfrak{p}}: \left( \frac{A}{\sqrt[\infty]{(b)}}\right)\{ {a +\sqrt[\infty]{(b)}}^{-1}\} \rightarrow \left( \frac{A}{\mathfrak{p}}\right)\left\{ {\frac{A}{\mathfrak{p}} \setminus \{ 0+\mathfrak{p}\}}^{-1}\right\} \}$ is collectively surjective;}
  \item[3)]{Given $a_1,b_1,a_2,b_2 \in A$ such that $D^{\infty}(a_2)\cap Z^{\infty}(b_2) \subseteq D^{\infty}(a_1) \cap Z^{\infty}(b_1)$, $\theta_1 \in \left( \frac{A}{\sqrt[\infty]{(b_1)}}\right)\{ {a_1 + \sqrt[\infty]{(b_1)}}^{-1}\}$ and  $\theta_2 \in \left( \frac{A}{\sqrt[\infty]{(b_2)}}\right)\{ {a_2 + \sqrt[\infty]{(b_2)}}^{-1}\}$ such that $t^{a_1,b_1}_{\mathfrak{p}}(\theta_1) = t^{a_2,b_2}_{\mathfrak{p}}(\theta_2)$, there are $a,b \in A$ such that $\mathfrak{p} \in D^{\infty}(a)\cap Z^{\infty}(b) \subseteq D^{\infty}(a_2)\cap Z^{\infty}(b_2)$ and $t^{a_1,b_1}_{a,b}(\theta_1) = t^{a_2,b_2}_{a,b}(\theta_2)$, where $t^{a_1,b_1}_{a,b}$ and $t^{a_2,b_2}_{a,b}$ are given in the following commutative diagram:
      $$\xymatrixcolsep{3pc}\xymatrix{
      \left( \frac{A}{\sqrt[\infty]{(b_1)}}\right)\{ {a_1 + \sqrt[\infty]{(b_1)}}^{-1}\} \ar[dd]_{t^{a_1,b_1}_{a_2,b_2}} \ar[rd]^{t^{a_1,b_1}_{a,b}} \ar@/^3pc/[rrd]^{t^{a_1,b_1}_{\mathfrak{p}}} &    &     \\
          & \left( \frac{A}{\sqrt[\infty]{(b)}}\right)\{ {a + \sqrt[\infty]{(b)}}^{-1}\} \ar[r]^{t^{a,b}_{\mathfrak{p}}} & \left( \frac{A}{\mathfrak{p}}\right)\left\{ {\frac{A}{\mathfrak{p}}\setminus \{ 0 + \mathfrak{p}\}}^{-1}\right\}\\
      \left( \frac{A}{\sqrt[\infty]{(b_2)}}\right)\{ {a_2 + \sqrt[\infty]{(b_2)}}^{-1}\} \ar[ur]^{t^{a_2,b_2}_{a,b}} \ar@/_3pc/[rru]_{t^{a_2,b_2}_{\mathfrak{p}}} &   &
      }$$

         }
\end{itemize}

By \textbf{Remark \ref{Encarnacao1}}, the condition 2) is already satisfied.\\

Note that for each basic open subset $U$ of ${\rm Spec}^{\infty-{\rm const}}\,(A)$, $F_A(U)$ has the structure of an $A-$algebra. The universal property of
$$F_A(U) = \varprojlim_{\substack{U = D^{\infty}(a)\cap Z^{\infty}(b) \\ (a,b) \in A \times A}} \dfrac{A}{\sqrt[\infty]{(b)}}\{ {a+\sqrt[\infty]{(b)}}^{-1} \}$$
induces a unique $\mathcal{C}^{\infty}-$homo\-morphism $u: A \to F_A(U)$ such that the following diagram commutes:
$$\xymatrix{
   & A \ar@/_2pc/[ldd]_{t_{a,b}} \ar@/^2pc/[rdd]^{t_{a',b'}} \ar[d]^{\exists ! u} &   \\
   & F_A(U) \ar[ld] \ar[rd] &   \\
\dfrac{A}{\sqrt[\infty]{(b)}}\{ {a + \sqrt[\infty]{(b)}}^{-1}\} \ar[rr]^{t^{a',b'}_{a,b}} & & \dfrac{A}{\sqrt[\infty]{(b')}}\{ {a'+\sqrt[\infty]{(b')}}^{-1}\}
}$$

for every $a,a',b,b' \in A$ such that $D^{\infty}(a)\cap Z^{\infty}(b) = U = D^{\infty}(a')\cap Z^{\infty}(b')$.\\

Now let $U$ and $V$ be any two basic open sets of ${\rm Spec}^{\infty-{\rm const}}\,(A)$ such that $U \subseteq V$, and let $\iota^{V}_{U}: U \hookrightarrow V$ be the inclusion map. Since $U$ and $V$ are basic open subsets of the constructible topology, there are $a,b,c,d \in A$ such that $U = D^{\infty}(a)\cap Z^{\infty}(b)$ and $V = D^{\infty}(c)\cap Z^{\infty}(d)$. Let $a',b',c',d' \in A$ be such that $U =D^{\infty}(a)\cap Z^{\infty}(b) = D^{\infty}(a')\cap Z^{\infty}(b')$ and $V = D^{\infty}(c)\cap Z^{\infty}(d) = D^{\infty}(c')\cap Z^{\infty}(d')$, so:

$$t^{a,b}_{a',b'}: \dfrac{A}{\sqrt[\infty]{(b)}}\{ {a+\sqrt[\infty]{(b)}}^{-1} \} \rightarrow \dfrac{A}{\sqrt[\infty]{(b')}}\{ {a' + \sqrt[\infty]{(b')}}^{-1}\}$$

and
$$t^{c,d}_{c',d'}: \dfrac{A}{\sqrt[\infty]{(d)}}\{ {c + \sqrt[\infty]{(d)}}^{-1}\} \rightarrow \dfrac{A}{\sqrt[\infty]{(d')}}\{ {c' + \sqrt[\infty]{(d')}}^{-1}\}$$

are $\mathcal{C}^{\infty}-$isomorphisms. Consider the following commutative diagram:\\

$$\xymatrixcolsep{3pc}\xymatrix{
 &  & F_A(U) & \\
 & \dfrac{A}{\sqrt[\infty]{(b)}}\{ {a+\sqrt[\infty]{(b)}}^{-1} \} \ar[rr]^{t^{a,b}_{a',b'}}_{\cong} \ar[ur]^{\jmath_{a,b}}& & \dfrac{A}{\sqrt[\infty]{(b')}}\{ {a' + \sqrt[\infty]{(b')}}^{-1}\} \ar[ul]^{\jmath_{a',b'}}\\
A \ar[ur]^{t_{a,b}} \ar[dr]_{t_{c,d}} & & \\
 &\dfrac{A}{\sqrt[\infty]{(d)}}\{ {c + \sqrt[\infty]{(d)}}\} \ar[rr]^{t^{c,d}_{c',d'}}_{\cong} \ar[uu]^{f^{c,d}_{a,b}}& & \dfrac{A}{\sqrt[\infty]{(d')}}\{ {c' + \sqrt[\infty]{(d')}}^{-1}\} \ar[uu]^{f^{c',d'}_{a',b'}} \ar[uull]^{f^{c',d'}_{a,b}}},$$

 where $f^{c,d}_{a,b}$ is the unique $\mathcal{C}^{\infty}-$homomorphism such that $f^{c,d}_{a,b} \circ t_{c,d} = t_{a,b}$, $f^{c',d'}_{a',b'}$ is the unique $\mathcal{C}^{\infty}-$homomorphism such that $f^{c',d'}_{a',b'} \circ t_{c',d'} = t_{a',b'}$ and $f^{c',d'}_{a,b}$ is the unique $\mathcal{C}^{\infty}-$homomorphism such that $f^{c',d'}_{a,b} \circ t^{c,d}_{c',d'} = f^{c,d}_{a,b}$

We have, then, the following cone:

$$\xymatrixcolsep{3pc}\xymatrix{
  & F_A(U) &   \\
\dfrac{A}{\sqrt[\infty]{(d)}}\{ {c + \sqrt[\infty]{(d)}}^{-1}\} \ar@/^1pc/[ur]^{\jmath_{a,b} \circ f^{c,d}_{a,b}} \ar[rr]^{t^{c,d}_{c',d'}} & & \dfrac{A}{\sqrt[\infty]{(d')}}\{ {c' + \sqrt[\infty]{(d')}}^{-1}\} \ar@/_1pc/[ul]_{\jmath_{a,b} \circ f^{c',d'}_{a,b}}}$$

that induces, by the universal property of $F_A(V)$ as a colimit, a \textit{unique} $\mathcal{C}^{\infty}-$homomorphism $f^{U}_{V}: F_A(V) \to F_A(U)$ such that for every $a,b,c,d,c',d' \in A$ the following diagram commutes:

$$\xymatrixcolsep{3pc}\xymatrix{
  & F_A(V) &   \\
  & F_A(U) \ar@{.>}[u]^{f^{U}_{V}}&   \\
\dfrac{A}{\sqrt[\infty]{(d)}}\{ {c + \sqrt[\infty]{(d)}}^{-1}\} \ar@/^2pc/[uur]^{\jmath_{c,d} \circ f^{c,d}_{a,b}} \ar[ur]^{k_{c,d}} \ar[rr]^{t^{c,d}_{c',d'}}& & \dfrac{A}{\sqrt[\infty]{(d')}}\{ {c' + \sqrt[\infty]{(d')}}^{-1}\} \ar[ul]_{k_{c',d'}} \ar@/_2pc/[uul]_{\jmath_{a,b} \circ t^{c',d'}_{a,b}}}$$

where $k_{c,d}$ and $k_{c',d'}$ are the canonical colimit $\mathcal{C}^{\infty}-$homo\-morphisms. \\

Let $f^{U}_{c,d}:= \jmath^{U}_{a,b} \circ f^{c,d}_{a,b}$, $f^{U}_{c,d}:= \jmath^{U}_{a',b'} \circ f^{c',d'}_{a',b'}$, $f^{V}_{e,f}:= \jmath^{V}_{c,d} \circ \jmath^{e,f}_{c,d}$ and $f^{V}_{e',f'}:= \jmath^{V}_{c',d'} \circ \jmath^{e',f'}_{c',d'}$.

$$\xymatrixcolsep{3pc}\xymatrix{
 & \dfrac{A}{\sqrt[\infty]{(b)}}\{ {a + \sqrt[\infty]{(b)}}^{-1}\} \ar[r]^{\jmath^{U}_{a,b}} & F_A(U) & \dfrac{A}{\sqrt[\infty]{(b')}}\{ {a' + \sqrt[\infty]{(b')}}^{-1}\} \ar[l]_{\jmath^{U}_{a',b'}}\\
A \ar[ur]^{t_{a,b}} \ar[r]^{t_{c,d}} \ar[rd]^{t_{e,f}} & \dfrac{A}{\sqrt[\infty]{(d)}}\{ {c + \sqrt[\infty]{(d)}}^{-1} \} \ar[ur]^{f^{U}_{c,d}} \ar[u]^{f^{c,d}_{a,b}} \ar[r]^{\jmath^{V}_{c,d}} & F_A(V) \ar[u]^{f^{U}_{V}} & \dfrac{A}{\sqrt[\infty]{(d')}}\{ {c' + \sqrt[\infty]{(d')}}^{-1}\} \ar[u]^{f^{c',d'}_{a',b'}} \ar[l]_{\jmath^{V}_{c',d'}} \ar[ul]_{f^{U}_{c',d'}}\\
 & \dfrac{A}{\sqrt[\infty]{(f)}}\{ {e + \sqrt[\infty]{(f)}}\} \ar[u]^{\jmath^{e,f}_{c,d}} \ar[ur]^{f^{V}_{e,f}} \ar@/_4pc/[rr]_{t^{e,f}_{e',f'}} \ar[r]^{\jmath^{W}_{e,f}} &  F_A(W) \ar[u]^{f^{V}_{W}} & \dfrac{A}{\sqrt[\infty]{(f')}}\{ {e' + \sqrt[\infty]{(f')}}\} \ar[u]^{f^{e',f'}_{c',d'}} \ar[l]_{\jmath^{W}_{e',f'}} \ar[ul]_{f^{W}_{e',f'}}
}$$

Denote $f^{U}_{e,f} := \jmath^{U}_{a,b} \circ f^{e,f}_{a,b}$ and $f^{U}_{e',f'} := \jmath^{U}_{a',b'} \circ f^{e',f'}_{a',b'}$. We have:

$$(f^{U}_{V} \circ f^{V}_{W}) \circ \jmath^{W}_{e,f} = (\jmath^{U}_{a,b}) \circ (t^{c,d}_{a,b} \circ t^{e,f}_{c,d}) = (\jmath^{U}_{a,b}) \circ t^{e,f}_{a,b} = f^{U}_{e,f} = f^{U}_{W} \circ \jmath^{W}_{e,f},$$

and the last equality holds because

$$\xymatrixcolsep{3pc}\xymatrix{
F_A(W) \ar[r]^{f^{U}_{W}} & F_A(U) \\
\frac{A}{\sqrt[\infty]{(f)}}\{ {e + \sqrt[\infty]{(f)}}^{-1}\} \ar[u]^{\jmath^{W}_{e,f}} \ar[r]^{t^{e,f}_{a,b}} \ar[ur]^{f^{U}_{e,f}} & \frac{A}{\sqrt[\infty]{(b)}}\{ {a + \sqrt[\infty]{(b)}}^{-1}\} \ar[u]^{\jmath^{U}_{a,b}}}$$

commutes

By the universal property of $F_A(W)$, since

$$f^{U}_{e',f'} \circ t^{e,f}_{e',f'} = f^{U}_{e,f},$$

there is a unique $\mathcal{C}^{\infty}-$homomorphism $f^{U}_{W}: F_A(W) \to F_A(U)$ such that the following diagram commutes:

$$\xymatrixcolsep{3pc}\xymatrix{
  & F_A(U) &  \\
  & F_A(W) \ar@{.>}[u]^{\exists ! f^{U}_{W}} &   \\
\dfrac{A}{\sqrt[\infty]{(f)}}\{ e + \sqrt[\infty]{(f)}\} \ar@/^2pc/[uur]^{f^{U}_{e,f}}\ar[ur]^{\jmath^{W}_{e,f}}\ar[rr]^{t^{e,f}_{e',f'}} & & \dfrac{A}{\sqrt[\infty]{(f')}}\{ e' + \sqrt[\infty]{(f')}\} \ar@/_2pc/[uul]^{f^{U}_{e',f'}} \ar[ul]^{\jmath^{W}_{e',f'}}}$$

Hence, by the uniqueness of $f^{U}_{W}$, it follows that $f^{U}_{W} = f^{U}_{V} \circ f^{V}_{W}$.\\

Given the inclusion $\iota^{U}_{U} = {\rm id}_{U}: U \hookrightarrow U$, by the universal property of the colimit, there is a unique $\mathcal{C}^{\infty}-$homomorphism $f^{U}_{U}: F_A(U) \to F_A(U)$ such that the following diagram commutes for every $a,b,a',b' \in A$ such that $D^{\infty}(a)\cap Z^{\infty}(b) = U = D^{\infty}(a')\cap Z^{\infty}(b')$:

$$\xymatrixcolsep{3pc}\xymatrix{
  & F_A(U) &  \\
  & F_A(U) \ar@{.>}[u]^{\exists ! f^{U}_{U}} &   \\
\dfrac{A}{\sqrt[\infty]{(b)}}\{ a + \sqrt[\infty]{(b)}\} \ar@/^2pc/[uur]^{\jmath^{U}_{a,b}}\ar[ur]^{\jmath^{U}_{a,b}}\ar[rr]^{t^{a,b}_{a',b'}} & & \dfrac{A}{\sqrt[\infty]{(b')}}\{ a' + \sqrt[\infty]{(b')}\} \ar@/_2pc/[uul]_{\jmath^{U}_{a',b'}} \ar[ul]^{\jmath^{U}_{a',b'}}}$$

In particular, if we take $a=a'$ and $b=b'$, we will have $ t^{a,b}_{a,b} = {\rm id}_{a,b}: \left( \frac{A}{\sqrt[\infty]{(b)}}\right)\{ {a+\sqrt[\infty]{(b)}}^{-1}\} \rightarrow \left( \frac{A}{\sqrt[\infty]{(b)}}\right)\{ {a+\sqrt[\infty]{(b)}}^{-1}\}$, so there is a unique $\mathcal{C}^{\infty}-$homo\-morphism $f^{U}_{U}: F_A(U) \to F_A(U)$ such that the following diagram commutes:
$$\xymatrixcolsep{3pc}\xymatrix{
F_A(U) \ar[r]^{f^U_U} & F_A(U)\\
\left( \frac{A}{\sqrt[\infty]{(b)}}\right)\{ {a+\sqrt[\infty]{(b)}}^{-1}\} \ar[u]^{\jmath^{U}_{a,b}} \ar[ur]^{f^{U}_{a,b}} \ar[r]^{{\rm id}_{a,b}} & \left( \frac{A}{\sqrt[\infty]{(b)}}\right)\{ {a+\sqrt[\infty]{(b)}}^{-1}\} \ar[u]^{\jmath^{U}_{a,b}}
}$$

$$f^{U}_{U} \circ \jmath^{U}_{a,b} = f^{U}_{a,b} = \jmath^{U}_{a,b} \circ {\rm id}_{a,b} = {\rm id}_{F_A(U)} \circ \jmath^{U}_{a,b} \Rightarrow f^{U}_{U} = {\rm id}_{F_A(U)}$$

Thus, we have a functor from the category of the basic open subsets of ${\rm Spec}^{\infty-{\rm const}}\,(A)$, $(\mathcal{B}({\rm Spec}^{\infty-{\rm const}}), \subseteq)$ to the category of $\mathcal{C}^{\infty}-$rings:

$$F_A: (\mathcal{B}({\rm Spec}^{\infty-{\rm const}}\,(A)), \subseteq)^{\rm op} \rightarrow \mathcal{C}^{\infty}{\rm Rng}$$

Given any open set $U$ of ${\rm Spec}^{\infty-{\rm const}}\,(A)$ and any $\mathfrak{p} \in U$, there exists some basic open subset $V = D^{\infty}(a)\cap Z^{\infty}(b)$, for some $a,b \in A$, such that $\mathfrak{p} \in V \subseteq U$, so $\mathcal{B}({\rm Spec}^{\infty-{\rm const}}\,(A))^{\rm op}$ is a cofinal subset of ${\rm Open}\,({\rm Spec}^{\infty-{\rm const}}\,(A))^{\rm op}$.\\

In order to describe the stalk of this pre-sheaf at $\mathfrak{p}$, it suffices to calculate the colimit over the system $\mathcal{B}({\rm Spec}^{\infty-{\rm const}}\,(A))$:\\

$$ (F_A)_{\mathfrak{p}} = \displaystyle\varinjlim_{\substack{ U \in {\rm Spec}^{\infty-{\rm const}}\,(A)\\ \mathfrak{p} \in U}} F_A(U) =  \displaystyle\varinjlim_{\substack{V \in \mathcal{B}({\rm Spec}^{\infty-{\rm const}}\,(A))\\ \mathfrak{p} \in V}}  F_A(V) $$

We have the following cone:

\begin{equation} \label{Valinhos}\xymatrixcolsep{3pc}\xymatrix{
\left( \frac{A}{\sqrt[\infty]{(b)}}\right)\{ {a+\sqrt[\infty]{(b)}}^{-1}\} \ar[dr]^{t^{a,b}_{\mathfrak{p}}} &   \\
      & \left( \frac{A}{\mathfrak{p}} \right)\{ \frac{A}{\mathfrak{p}} \setminus \{ 0 + \mathfrak{p}\}^{-1}\}\\
\left( \frac{A}{\sqrt[\infty]{(d)}}\right)\{ {d+\sqrt[\infty]{(d)}}^{-1}\} \ar[uu]^{f^{c,d}_{a,b}} \ar[ur]_{t^{c,d}_{\mathfrak{p}}} &
}
\end{equation}

We claim that:

$$\substack{\displaystyle\varprojlim \\ V \in \mathcal{B}({\rm Spec}^{\infty-{\rm const}}\,(A)) } F_A(V) \cong \left( \frac{A}{\mathfrak{p}}\right)\{ {\frac{A}{\mathfrak{p}} \setminus \lbrace 0 + \mathfrak{p}\}}^{-1} \rbrace$$

In order to prove it, we must have:

\begin{itemize}
  \item[(i)]{The diagram \eqref{Valinhos} is a commutative cone;}
  \item[(ii)]{The family of arrows $\{ t^{a,b}_{\mathfrak{p}}\}_{a,b \in U}$ is collectively surjective;}
  \item[(iii)]{ For every $x_{a,b} \in \left( \frac{A}{\sqrt[\infty]{(b)}}\right)\{{a +\sqrt[\infty]{(b)}}^{-1} \}$ and $x_{a',b'} \in \left( \frac{A}{\sqrt[\infty]{(b')}}\right)\{{a' +\sqrt[\infty]{(b')}}^{-1} \}$ such that:
      $$t^{a,b}_{\mathfrak{p}}(x_{a,b}) = t^{a',b'}_{\mathfrak{p}}(x_{a',b'})$$
      there are $c,d \in A$ such that:
      $$D^{\infty}(c)\cap Z^{\infty}(d) \subseteq [D^{\infty}(a) \cap Z^{\infty}(b)] \cap [D^{\infty}(a') \cap Z^{\infty}(b')],$$
  i.e., there are arrows:
  $$f^{a,b}_{c,d}: \left( \frac{A}{\sqrt[\infty]{(b)}}\right)\{{a +\sqrt[\infty]{(b)}}^{-1} \} \to \left( \frac{A}{\sqrt[\infty]{(d)}}\right)\{{c +\sqrt[\infty]{(d)}}^{-1} \}$$
  and
  $$f^{a',b'}_{c,d}: \left( \frac{A}{\sqrt[\infty]{(b')}}\right)\{{a' +\sqrt[\infty]{(b')}}^{-1} \} \to \left( \frac{A}{\sqrt[\infty]{(d)}}\right)\{{c +\sqrt[\infty]{(d)}}^{-1} \}$$
 such that:

 $$f^{a,b}_{c,d}(x_{a,b}) = f^{a',b'}_{c,d}(x_{a',b'})$$
 that is, the following diagram commutes:

  $$\xymatrixcolsep{3pc}\xymatrix{
 \left( \frac{A}{\sqrt[\infty]{(b)}}\right) \{ {a+ \sqrt[\infty]{(b)}}^{-1}\} \ar[dr]^{f^{a,b}_{c,d}} \ar@/^2pc/[rrd]^{t^{a,b}_{\mathfrak{p}}} &   &   \\
      & \left( \frac{A}{\sqrt[\infty]{(d)}}\right)\{ {c+\sqrt[\infty]{(d)}}^{-1}\} \ar[r]^{t^{c,d}_{\mathfrak{p}}} & \left( \frac{A}{\mathfrak{p}}\right)\{ \frac{A}{\mathfrak{p}} \setminus \{ 0 + \mathfrak{p}\}^{-1} \}\\
 \left( \frac{A}{\sqrt[\infty]{(b')}}\right)\{ {a'+\sqrt[\infty]{(b')}}^{-1}\} \ar[ur]_{f^{a',b'}_{c,d}} \ar@/_2pc/[rru]_{t^{a',b'}_{\mathfrak{p}}} &   &
 }$$}

The latter condition is equivalent to saying that for every $a,b \in A$, the morphism:
$$\begin{array}{cccc}
    t^{a,b}_{\mathfrak{p}}: & \left( \frac{A}{\sqrt[\infty]{(b)}}\right)\{ a + \sqrt[\infty]{(b)}^{-1}\} & \rightarrow & \left( \frac{A}{\mathfrak{p}}\right)\{ \frac{A}{\mathfrak{p}}\setminus \{ 0 + \mathfrak{p} \}^{-1}\} \\
     & \gamma & \mapsto & t^{a,b}_{\mathfrak{p}}(\gamma)
  \end{array}$$

is injective, that is, $\ker t^{a,b}_{\mathfrak{p}} = \{ 0\}$.\\

Since $t^{a,b}_{\mathfrak{p}} = \jmath_a \circ \eta_{b,\mathfrak{p}}$ and $\jmath_a$ is injective, $\ker t^{a,b}_{\mathfrak{p}} = \ker \eta_{b,\mathfrak{p}}$ and showing that $t^{a,b}_{\mathfrak{p}}$ is injective is equivalent to prove that $ \ker \eta_{b,\mathfrak{p}} \{ 0\}$.\\

Consider the following diagram:

$$\xymatrixcolsep{3pc}\xymatrix{
 & \left( \frac{A}{\sqrt[\infty]{(b)}} \right) \ar[r]^{\eta_{a + \sqrt[\infty]{(b)}}} \ar[dd]^{q_{b,\mathfrak{p}}} & \left( \frac{A}{\sqrt[\infty]{(b)}}\right)\{ {a + \sqrt[\infty]{(b)}}^{-1}\} \ar[dr]^{t^{a,b}_{\mathfrak{p}}} \ar[dd]^{\eta_{b,\mathfrak{p}}} &  \\
 A \ar[ur]^{q_{\sqrt[\infty]{(b)}}} \ar[dr]_{q_{\mathfrak{p}}} & & & \left( \frac{A}{\mathfrak{p}}\right)\{ \frac{A}{\mathfrak{p}} \setminus \{ 0 +\mathfrak{p} \}^{-1}\}\\
   & \left( \frac{A}{\mathfrak{p}}\right)  \ar[r]^{\eta_{a+\mathfrak{p}}} & \left( \frac{A}{\mathfrak{p}}\right)\{ {a+\mathfrak{p}}^{-1}\} \ar[ur]^{\jmath_a} &
}$$

We have:
$$t^{a,b}_{\mathfrak{p}} = \jmath_a \circ \eta_{b,\mathfrak{p}}$$
so
$$t^{a,b}_{\mathfrak{p}} \circ \eta_{a + \sqrt[\infty]{(b)}} = \jmath_a \circ \eta_{b,\mathfrak{p}} \circ \eta_{a + \sqrt[\infty]{(b)}}$$
Since $\eta_{b,\mathfrak{p}} \circ \eta_{a + \sqrt[\infty]{(b)}} = \eta_{a+\mathfrak{p}} \circ q_{b,\mathfrak{p}}$, it follows that:
$$t^{a,b}_{\mathfrak{p}} \circ \eta_{a + \sqrt[\infty]{(b)}} = \jmath_a \circ \eta_{a+\mathfrak{p}}\circ q_{b,\mathfrak{p}}$$
$$ t^{a,b}_{\mathfrak{p}} \circ \eta_{a + \sqrt[\infty]{(b)}} \circ q_{\sqrt[\infty]{(b)}} = \jmath_a \circ \eta_{a+\mathfrak{p}}\circ \underbrace{q_{b,\mathfrak{p}} \circ q_{\sqrt[\infty]{(b)}}}_{=q_{\mathfrak{p}}}$$
hence
\begin{equation}\label{Desree}t^{a,b}_{\mathfrak{p}} \circ \eta_{a + \sqrt[\infty]{(b)}} \circ q_{\sqrt[\infty]{(b)}} = \jmath_a \circ \eta_{a+\mathfrak{p}}\circ q_{\mathfrak{p}}
\end{equation}

Let $\gamma \in \ker t^{a,b}_{\mathfrak{p}} \subseteq \left( \frac{A}{\sqrt[\infty]{(b)}}\right)\{ a + \sqrt[\infty]{(b)}^{-1}\}$, so there are $\alpha + \sqrt[\infty]{(b)},  \in \left( \frac{A}{\sqrt[\infty]{(b)}}\right)$ and $\beta  + \sqrt[\infty]{(b)} \in \{ a + \sqrt[\infty]{(b)}\}^{\infty-{\rm sat}}$ such that $\gamma = \dfrac{\eta_{a + \sqrt[\infty]{(b)}}(\alpha + \sqrt[\infty]{(b)})}{\eta_{a + \sqrt[\infty]{(b)}}(\beta  + \sqrt[\infty]{(b)})}$.\\

Now, $t^{a,b}_{\mathfrak{p}}(\gamma) = 0$ if, and only if, $(t^{a,b}_{\mathfrak{p}} \circ \eta_{a + \sqrt[\infty]{(b)}} \circ q_{\sqrt[\infty]{(b)}})(\alpha \cdot \beta^{-1}) = 0$. By \eqref{Desree}, this last condition is equivalent to:
$$\jmath_a(\eta_{a+\mathfrak{p}}(q_{\mathfrak{p}}(\alpha \cdot \beta^{-1}))) = \jmath (\eta_{a+\mathfrak{p}} \circ q_{\mathfrak{p}}(\alpha) \cdot \eta_{a+\mathfrak{p}}\circ q_{\mathfrak{p}}(\beta)^{-1})$$

so

$$(\eta_{a+\mathfrak{p}} \circ q_{\mathfrak{p}})(\alpha) \in \ker \jmath_a = \{ 0 + \mathfrak{p}\}$$
and
$$\eta_{a+\mathfrak{p}}(\alpha + \mathfrak{p}) = 0 + \mathfrak{p}.$$

However, $\eta_{a+\mathfrak{p}}(\alpha + \mathfrak{p}) = 0 \iff (\exists \delta + \mathfrak{p} \in \{ a + \mathfrak{p}\}^{\infty-{\rm sat}})((\delta + \mathfrak{p})\cdot (\alpha + \mathfrak{p}) = 0 + \mathfrak{p})$, equivalently, $\alpha \cdot \delta \in \mathfrak{p}$. Since $\mathfrak{p}$ is a prime ideal, $\alpha \cdot \delta \in \mathfrak{p} \to (\alpha \in \mathfrak{p}) \vee (\delta \in \mathfrak{p})$.\\

We claim that $\alpha \in \mathfrak{p}$. \textit{Ab absurdo}, suppose $\alpha \notin \mathfrak{p}$, so we must have $\delta \in \mathfrak{p}$. However, $\delta + \mathfrak{p} \in \{ a + \mathfrak{p}\}^{\infty-{\rm sat}}$, so $\eta_{a+\mathfrak{p}}(\delta + \mathfrak{p}) \in \left( \frac{A}{\mathfrak{p}}\right)\{ {a+\mathfrak{p}}^{-1}\}^{\times}$. But since $\delta \in \mathfrak{p}$, it would follow that $\eta_{a+\mathfrak{p}}(\delta + \mathfrak{p}) = \eta_{a+\mathfrak{p}}(0+\mathfrak{p}) \in \left( \frac{A}{\mathfrak{p}}\right)\{ {a+\mathfrak{p}}^{-1}\}^{\times}$, from which would follow that $\left( \frac{A}{\mathfrak{p}}\right)\{ {a+\mathfrak{p}}^{-1}\} \cong \{ 0\}$, a contradiction.\\

Thus we have $\alpha \in \mathfrak{p}$, so $\alpha  + \sqrt[\infty]{(b)}$, so $\alpha  + \sqrt[\infty]{(b)} \in q_{\sqrt[\infty]{(b)}}[\mathfrak{p}]$ and $q_{b, \mathfrak{p}}(\alpha  + \sqrt[\infty]{(b)}) = 0 + \mathfrak{p}$. Hence:

$$\eta_{b, \mathfrak{p}}(\eta_{a  + \sqrt[\infty]{(b)}}(\alpha  + \sqrt[\infty]{(b)})) = \eta_{a+\mathfrak{p}}(\alpha + \mathfrak{p}) = 0$$
and the result follows.\\

\end{itemize}

Now we are going to compute the fibers of the above presheaf.\\

\begin{theorem}Consider the presheaf defined on the basis of the constructible topology:
$$\begin{array}{cccc}
    F_A: & {\mathcal{B}}\,({\rm Spec}^{\infty-{\rm const}}\,(A))^{{\rm op}} & \rightarrow & \mathcal{C}^{\infty}{\rm Rng} \\
     & U = D^{\infty}(a)\cap Z^{\infty}(b) & \mapsto & F_A(U) = \varinjlim_{U=D^{\infty}(a)} \left( \frac{A}{\sqrt[\infty]{(b)}}\right)\{ (a + \sqrt[\infty]{(b)})^{-1}\} \\
     & \imath^{V}_{U}: U \hookrightarrow V & \mapsto & \rho^{V}_{U}: F_{A}(U) \to F_A(V)
  \end{array}$$

The stalk of $F_A$ at a point $\mathfrak{p} \in {\rm Spec}^{\infty}\,(A)$ is isomorphic to the $\mathcal{C}^{\infty}-$field $\left( \dfrac{A}{\mathfrak{p}}\right)\left\{ \left( \dfrac{A}{\mathfrak{p}} \setminus \{ 0 + \mathfrak{p}\}^{-1}\right)\right\}$.
\end{theorem}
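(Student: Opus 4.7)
The plan is to identify the stalk $(F_A)_\mathfrak{p}$ with the $\mathcal{C}^\infty$-field $k_\mathfrak{p}(A) = \left(\frac{A}{\mathfrak{p}}\right)\left\{\left(\frac{A}{\mathfrak{p}}\setminus\{0+\mathfrak{p}\}\right)^{-1}\right\}$ by verifying directly that $k_\mathfrak{p}(A)$, together with the cone $\{t^{a,b}_\mathfrak{p}: F_A(D^\infty(a)\cap Z^\infty(b))\to k_\mathfrak{p}(A)\}$ induced (via the universal property of each $F_A(U)$) by the $f^{a,b}_\mathfrak{p}$ constructed above, is the filtered colimit of the directed system of $F_A(U)$'s with $U$ a basic open of ${\rm Spec}^{\infty-{\rm const}}(A)$ containing $\mathfrak{p}$. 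Cofinality of the basis among open neighbourhoods of $\mathfrak{p}$ (established in the discussion preceding the theorem) lets us compute the stalk over this basis.

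First I would check that the proposed cone commutes: given $(a,b)$ and $(a',b')$ with $D^\infty(a)\cap Z^\infty(b) \supseteq D^\infty(a')\cap Z^\infty(b')$, commutativity of the triangle $t^{a',b'}_\mathfrak{p} \circ t^{a',b'}_{a,b} = t^{a,b}_\mathfrak{p}$ reduces, by chasing through the universal properties defining the transition maps $t^{a',b'}_{a,b}$, to the evident equality of the two induced extensions of the composite $A \twoheadrightarrow A/\mathfrak{p} \to k_\mathfrak{p}(A)$. Collective surjectivity of the cone is exactly what was recorded in Remark \ref{Encarnacao1}: each $\eta_{b,\mathfrak{p}}$ is surjective and the colimit $\mathop{\rm colim}_{a\notin\mathfrak{p}}\frac{A}{\mathfrak{p}}\{(a+\mathfrak{p})^{-1}\}$ exhausts $k_\mathfrak{p}(A)$ as $a$ varies outside $\mathfrak{p}$, so the $f^{a,b}_\mathfrak{p}$ (and hence the $t^{a,b}_\mathfrak{p}$ after passing through the $F_A(U)$-presentations) are jointly surjective.

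The decisive step is the germ-compatibility condition: if $t^{a,b}_\mathfrak{p}(x) = t^{a',b'}_\mathfrak{p}(y)$ then $x$ and $y$ already agree on some common refinement $D^\infty(c)\cap Z^\infty(d) \subseteq [D^\infty(a)\cap Z^\infty(b)]\cap[D^\infty(a')\cap Z^\infty(b')]$ with $\mathfrak{p} \in D^\infty(c)\cap Z^\infty(d)$. By directedness of the basis, this reduces to proving that each $t^{a,b}_\mathfrak{p}$ is \emph{injective}. Writing $t^{a,b}_\mathfrak{p} = \jmath_a \circ \eta_{b,\mathfrak{p}}$, and using that $\jmath_a$ embeds the localization of the $\mathcal{C}^\infty$-domain $A/\mathfrak{p}$ into its field of fractions, injectivity reduces to $\ker\eta_{b,\mathfrak{p}} = \{0\}$. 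Represent $\gamma$ in this kernel as $\eta_{a+\sqrt[\infty]{(b)}}(\alpha+\sqrt[\infty]{(b)})\cdot\eta_{a+\sqrt[\infty]{(b)}}(\beta+\sqrt[\infty]{(b)})^{-1}$ with $\beta+\sqrt[\infty]{(b)}\in\{a+\sqrt[\infty]{(b)}\}^{\infty\text{-sat}}$; chasing the commutative square linking $\eta_{b,\mathfrak{p}}$, $q_{b,\mathfrak{p}}$ and $\eta_{a+\mathfrak{p}}$ yields $\eta_{a+\mathfrak{p}}(\alpha+\mathfrak{p}) = 0$, hence some $\delta+\mathfrak{p} \in \{a+\mathfrak{p}\}^{\infty\text{-sat}}$ with $\alpha\delta \in \mathfrak{p}$.

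The main obstacle is then the prime-ideal bookkeeping inside this kernel argument: I must argue that $\delta \notin \mathfrak{p}$ (otherwise $\eta_{a+\mathfrak{p}}(\delta+\mathfrak{p})$ would be a unit equal to $0$, forcing $(A/\mathfrak{p})\{(a+\mathfrak{p})^{-1}\} \cong \{0\}$ and contradicting $\mathfrak{p} \in D^\infty(a)$), so primality of $\mathfrak{p}$ forces $\alpha \in \mathfrak{p}$, hence $q_{b,\mathfrak{p}}(\alpha+\sqrt[\infty]{(b)}) = 0+\mathfrak{p}$ and $\gamma = 0$ already upstairs. With injectivity of each $t^{a,b}_\mathfrak{p}$ in hand, the three colimit conditions are all met and the universal property of filtered colimits identifies $(F_A)_\mathfrak{p}$ with $k_\mathfrak{p}(A)$.
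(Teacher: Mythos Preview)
Your overall plan---verify directly that $k_{\mathfrak{p}}(A)$ with the cone $\{t^{a,b}_{\mathfrak{p}}\}$ satisfies the filtered-colimit universal property---is sound, and the commutativity and collective-surjectivity parts are fine. The gap is in the ``decisive step'': the maps $t^{a,b}_{\mathfrak{p}}$ are \emph{not} injective in general, so your reduction of the germ-compatibility condition to injectivity collapses. Concretely, take any $\alpha \in \mathfrak{p} \setminus \sqrt[\infty]{(b)}$ (such $\alpha$ exist whenever $\sqrt[\infty]{(b)} \subsetneq \mathfrak{p}$, e.g.\ already for $b=0$ in a reduced $A$); then $\eta_{a+\sqrt[\infty]{(b)}}(\alpha+\sqrt[\infty]{(b)})$ is typically nonzero in $\bigl(A/\sqrt[\infty]{(b)}\bigr)\{(a+\sqrt[\infty]{(b)})^{-1}\}$, yet its image in $k_{\mathfrak{p}}(A)$ vanishes. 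Your final sentence---``$q_{b,\mathfrak{p}}(\alpha+\sqrt[\infty]{(b)}) = 0+\mathfrak{p}$ and $\gamma = 0$ already upstairs''---only establishes $\eta_{b,\mathfrak{p}}(\gamma)=0$, which is exactly what you started from (since $\jmath_a$ is injective); it does not force $\gamma=0$ in the source.

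What the colimit condition actually requires, and what your argument almost proves, is the weaker statement: once you have exhibited $\alpha \in \mathfrak{p}$, refine the basic open $D^{\infty}(a)\cap Z^{\infty}(b)$ to $D^{\infty}(a)\cap Z^{\infty}(b)\cap Z^{\infty}(\alpha) = D^{\infty}(a)\cap Z^{\infty}(b^2+\alpha^2)$, which still contains $\mathfrak{p}$; in this refinement $\alpha$ lies in the $\infty$-radical, so the image of $\gamma$ there is zero. That is the correct completion of your approach. The paper's own proof avoids this bookkeeping altogether: it takes $L$ to be the stalk (with its colimit universal property already in hand), builds a surjection $\overline{t_{\mathfrak{p}}}: A_{\mathfrak{p}}/\mathfrak{m}_{\mathfrak{p}} \twoheadrightarrow L$ from the universal property of the localization $A_{\mathfrak{p}}$, builds $\Psi_{\mathfrak{p}}: L \to k_{\mathfrak{p}}(A)$ from the universal property of $L$, and then checks that $\Psi_{\mathfrak{p}}\circ\overline{t_{\mathfrak{p}}}$ coincides with the known isomorphism $\psi_{\mathfrak{p}}: A_{\mathfrak{p}}/\mathfrak{m}_{\mathfrak{p}} \cong k_{\mathfrak{p}}(A)$; hence $\overline{t_{\mathfrak{p}}}$ is an epimorphism that is also a section, so an isomorphism.
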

\begin{proof}
Let $L = \varinjlim_{\mathfrak{p} \in U_{a,b}} \dfrac{A}{\sqrt[\infty]{(b)}}\{ (a + \sqrt[\infty]{(b)})^{-1}\}$.\\

Consider the following commutative diagram:\\

$$\xymatrixcolsep{3pc}\xymatrix{
A \ar[dr]_{t} \ar[r]^{\eta_{A \setminus \mathfrak{p}}} & A\{ {A \setminus \mathfrak{p}}^{-1}\}\\
  & L}$$

where $t = \jmath_{a,b} \circ \eta_{a + \sqrt[\infty]{(b)}} \circ q_{\sqrt[\infty]{(b)}}$.\\

It is clear that $(x \in A \setminus \mathfrak{p}) \to (t(x) \in L^{\times})$. Now, since $\mathfrak{p} \in {\rm Spec}^{\infty}\,(A)$, we have ${A \setminus \mathfrak{p}} = {A \setminus \mathfrak{p}}^{\infty-{\rm sat}}$, so there is a unique $\mathcal{C}^{\infty}-$homomorphism $t_{\mathfrak{p}}: A \{ {A \setminus \mathfrak{p}}^{-1}\} \to L$ such that the following triangle commutes:

$$\xymatrixcolsep{3pc}\xymatrix{
A \ar[dr]_{t} \ar[r]^{\eta_{A \setminus \mathfrak{p}}} & A\{ {A \setminus \mathfrak{p}}^{-1}\} \ar[d]^{t_{\mathfrak{p}}}\\
  & L}$$

In what follows we are going to show that $t_{\mathfrak{p}}$ is a $\mathcal{C}^{\infty}-$isomorphism.\\

Consider the diagram:\\

$$\xymatrixcolsep{3pc}\xymatrix{
A \ar@/_2pc/[dd]_{t_{a,b}} \ar[d]^{q_{\sqrt[\infty]{(b)}}} \ar[dr]^{t} \ar[r]^{\eta_{A \setminus \mathfrak{p}}} & A \{ {A \setminus \mathfrak{p}}^{-1}\} \ar[d]_{t_{\mathfrak{p}}}\\
\dfrac{A}{\sqrt[\infty]{(b)}} \ar[d]^{\eta_{a + \sqrt[\infty]{(b)}}} & L \\
\dfrac{A}{\sqrt[\infty]{(b)}}\{ (a + \sqrt[\infty]{(b)})^{-1}\} \ar[ur]^{\jmath_{a,b}} &
}$$

Now we claim that:

$t_{\mathfrak{p}}: A\{ {A \setminus \mathfrak{p}}^{-1}\} \to L$ is a surjective map, \textit{i.e.},

$$(\forall \gamma \in L)(\exists x \in A)(\exists y \in A \setminus \mathfrak{p})\left( \gamma = \frac{t(x)}{t(y)} = t_{\mathfrak{p}}\left( \frac{\eta_{\mathfrak{p}}(x)}{\eta_{\mathfrak{p}}(y)}\right)\right).$$

Now, given $\gamma \in L$, since $\jmath_{a,b}$ is surjective there must exist $\alpha_{a,b} \in \dfrac{A}{\sqrt[\infty]{(b)}}\{ (a + \sqrt[\infty]{(b)})^{-1}\}$ such that $\jmath_{a,b}(\alpha_{a,b}) = \gamma$.\\

Since $\alpha_{a,b} \in \dfrac{A}{\sqrt[\infty]{(b)}}\{ (a + \sqrt[\infty]{(b)})^{-1}\}$, there are $c + \sqrt[\infty]{(b)} \in \dfrac{A}{\sqrt[\infty]{(b)}}$ and $d + \sqrt[\infty]{(b)} \in \{ a + \sqrt[\infty]{(b)}\}^{\infty-{\rm sat}}$ such that $\alpha_{a,b} = \dfrac{\eta_{a+\sqrt[\infty]{(b)}}(c + \sqrt[\infty]{(b)})}{\eta_{a + \sqrt[\infty]{(b)}}(d + \sqrt[\infty]{(b)})}$.\\

Now, $d + \sqrt[\infty]{(b)} \in \{ a + \sqrt[\infty]{(b)}\}^{\infty-{\rm sat}} \iff (\exists z \in A)(\exists s \in \{ a\}^{\infty-{\rm sat}})(s \cdot (d \cdot z - 1) \in \sqrt[\infty]{(b)})$, and since $\sqrt[\infty]{(b)} \subseteq \mathfrak{p}$, we have:

$$(\exists z \in A)(\exists s \in \{ a\}^{\infty-{\rm sat}})(s \cdot (d \cdot z - 1) \in \mathfrak{p}).$$

Since $s \in \{ a\}^{\infty-{\rm sat}} \subseteq {A \setminus \mathfrak{p}}^{\infty-{\rm sat}} = A \setminus \mathfrak{p}$, it follows that $s \notin \mathfrak{p}$, and since $s \cdot (d \cdot z - 1) \in \sqrt[\infty]{(b)} \subseteq \mathfrak{p}$ and $\mathfrak{p}$ is a prime ideal, either $s \in \mathfrak{p}$ or $d \cdot z - 1 \in \mathfrak{p}$. It is not the case that $s \in \mathfrak{p}$  so $d \cdot z - 1 \in \mathfrak{p}$. However, if $d \in \mathfrak{p}$ we would have $d \cdot z - 1 \in \mathfrak{p}$, hence $1 \in \mathfrak{p}$ and $\mathfrak{p}$ would not be a proper ideal, hence $d \in A \setminus \mathfrak{p} = {A \setminus \mathfrak{p}}^{\infty-{\rm sat}}$.\\

It suffices to take:

$$\alpha_{a,b} = \frac{t_{a,b}(c)}{t_{a,b}(d)},$$

so
$$\gamma = \jmath_{a,b}(\alpha_{a,b}) = \jmath_{a,b}\left( \frac{t_{a,b}(c)}{t_{a,b}(d)}\right) = \frac{t(c)}{t(d)} = t_{\mathfrak{p}}\left( \frac{\eta_{A \setminus \mathfrak{p}}(c)}{\eta_{A \setminus \mathfrak{p}}(d)}\right)$$

and $t_{\mathfrak{p}}$ is surjective.\\

\textbf{Claim:} $\m_{\mathfrak{p}} \subseteq \ker t_{\mathfrak{p}}$.\\

Let $\frac{\eta_{A \setminus \mathfrak{p}}(x)}{\eta_{A \setminus \mathfrak{p}}(y)} \in \m_{\mathfrak{p}}$, with $x \in \mathfrak{p}$ and $y \in A \setminus \mathfrak{p}$ (cf. \textbf{Proposition 20}, p. 69 of \cite{BM2}). We have that  $t_{\mathfrak{p}}\left( \frac{\eta_{A \setminus \mathfrak{p}}(x)}{\eta_{A \setminus \mathfrak{p}}(y)}\right)$.\\

Now we claim that $t(x)=0$, from what follows that:

$$t_{\mathfrak{p}} \left( \frac{\eta_{A \setminus \mathfrak{p}}(x)}{\eta_{A \setminus \mathfrak{p}}(y)}\right) = \frac{t(x)}{t(y)} = 0.$$

Since the following diagram commutes for every $a \in A \setminus \mathfrak{p}$ and $b \in \mathfrak{p}$:

$$\xymatrixcolsep{3pc}\xymatrix{
A \ar[dr]^{t} \ar[d]_{t_{a,b}}& \\
\dfrac{A}{\sqrt[\infty]{(b)}}\{ (a + \sqrt[\infty]{(b)})^{-1}\} \ar[r]^{\jmath_{a,b}} & L} $$

in particular, making $a = 1 \in A \setminus \mathfrak{p}$ and $b = x \in \mathfrak{p}$ yields the following commutative diagram:

$$\xymatrixcolsep{3pc}\xymatrix{
A \ar[dr]^{t} \ar[d]_{t_{1,x}}& \\
\dfrac{A}{\sqrt[\infty]{(x)}}\{ (1 + \sqrt[\infty]{(x)})^{-1}\} \ar[r]^{\jmath_{1,x}} & L}$$

But $t(x) = \jmath_{1,x}(t_{1,x}(x)) = \jmath_{1,x}(\eta_{1 + \sqrt[\infty]{(x)}}(0 + \sqrt[\infty]{(x)})) = 0$, hence $\frac{\eta_{A \setminus \mathfrak{p}}(x)}{\eta_{A \setminus \mathfrak{p}}(y)} \in \ker t_{\mathfrak{p}}$.\\

By the \textbf{Theorem of the Homomorphism}, there exists a unique $\overline{t_{\mathfrak{p}}}: \dfrac{A_{\mathfrak{p}}}{\m_{\mathfrak{p}}} \to L$ such that:

$$\xymatrixcolsep{3pc}\xymatrix{
A_{\mathfrak{p}} \ar@{->>}[r]^{q_{\m_{\mathfrak{p}}}} \ar@{->>}[d]_{t_{\mathfrak{p}}} & \dfrac{A_{\mathfrak{p}}}{\m_{\mathfrak{p}}} \ar[dl]^{\overline{t_{\mathfrak{p}}}}\\
L &
}$$

and since $t_{\mathfrak{p}} = \overline{t_{\mathfrak{p}}} \circ q_{\m_{\mathfrak{p}}}$, it follows that $\overline{t_{\mathfrak{p}}}$ is surjective, hence an epimorphism.\\

In order to show that $\overline{t_{\mathfrak{p}}}$ is a $\mathcal{C}^{\infty}-$isomorphism, since it is already an epimorphism, it suffices to prove that it admits a section.\\

For every $a,b \in A$ such that $b \in \mathfrak{p}$ and $a \notin \mathfrak{p}$, consider the following diagram:

$$\xymatrixcolsep{7pc}\xymatrix{
A \ar[r]^{q_{\sqrt[\infty]{(b)}}} \ar@{->>}[rd]_{q_{\mathfrak{p}}} & \dfrac{A}{\sqrt[\infty]{(b)}} \ar@{->>}[d]^{q_{b,\mathfrak{p}}} \ar[r]^{\eta_{a + \sqrt[\infty]{(b)}}} & \dfrac{A}{\sqrt[\infty]{(b)}}\{ (a + \sqrt[\infty]{(b)})^{-1}\} \\
  & \dfrac{A}{\mathfrak{p}} \ar[r]^{\eta_{\frac{A}{\mathfrak{p}}\setminus \{ 0 + \mathfrak{p} \}}} & \left( \dfrac{A}{\mathfrak{p}} \right)\left\{ \dfrac{A}{\mathfrak{p}} \setminus \{ 0 + \mathfrak{p}\}^{-1}  \right\}
}$$



We have the following commutative diagram:

$$\xymatrixcolsep{8pc}\xymatrix{
A \ar[r]^{q_{\sqrt[\infty]{(b)}}} \ar@{->>}[rd]_{q_{\mathfrak{p}}} & \dfrac{A}{\sqrt[\infty]{(b)}} \ar@{->>}[d]^{q_{b,\mathfrak{p}}} \ar[r]^{\eta_{a + \sqrt[\infty]{(b)}}} & \dfrac{A}{\sqrt[\infty]{(b)}}\{ (a + \sqrt[\infty]{(b)})^{-1}\} \ar@{.>}[d]^{\eta_{b,\mathfrak{p}}}\\
  & \dfrac{A}{\mathfrak{p}} \ar[r]^{\eta_{\frac{A}{\mathfrak{p}}\setminus \{ 0 + \mathfrak{p} \} + \mathfrak{p}}} & \left( \dfrac{A}{\mathfrak{p}} \right)\left\{ \dfrac{A}{\mathfrak{p}} \setminus \{ 0 + \mathfrak{p}\} + \mathfrak{p}^{-1}  \right\}
}$$

Let $a,b,c,d \in A$ be such that $U_{c,d} \subseteq U_{a,b}$, so we have the following diagram:
$$\xymatrixcolsep{3pc}\xymatrix{
 & \left( \dfrac{A}{\mathfrak{p}}\right) \left\{ \dfrac{A}{\mathfrak{p}}\setminus \{ 0 + \mathfrak{p}\}^{-1}\right\}& \\
 \left(\dfrac{A}{\sqrt[\infty]{(b)}}\right)\{ (a + \sqrt[\infty]{(b)})\} \ar[ur]_{t^{a,b}_{\mathfrak{p}}} \ar[rr]^{t^{c,d}_{a,b}} & & \left(\dfrac{A}{\sqrt[\infty]{(d)}}\right)\{ (c + \sqrt[\infty]{(d)})\} \ar[ul]^{t^{c,d}_{\mathfrak{p}}}
}$$

By the universal property of the inductive limit $L$, there must exist a unique
$$\Psi_{\mathfrak{p}}: L \to \left( \dfrac{A}{\mathfrak{p}} \right)\left\{ \dfrac{A}{\mathfrak{p}} \setminus \{ 0 + \mathfrak{p}\}^{-1}  \right\}$$
such that the following diagram commutes:

$$\xymatrixcolsep{3pc}\xymatrix{
 & \left( \dfrac{A}{\mathfrak{p}}\right) \left\{ \dfrac{A}{\mathfrak{p}}\setminus \{ 0 + \mathfrak{p}\}^{-1}\right\} &  \\
 & L \ar@{.>}[u]^{\Psi_{\mathfrak{p}}} & \\
\left(\dfrac{A}{\sqrt[\infty]{(b)}}\right)\{ (a + \sqrt[\infty]{(b)})^{-1}\} \ar@/^2pc/[uur]_{t^{a,b}_{\mathfrak{p}}} \ar[ur]^{\jmath_{a,b}} \ar[rr]^{f^{a,b}_{c,d}} & & \left(\dfrac{A}{\sqrt[\infty]{(d)}}\right)\{ (c + \sqrt[\infty]{(d)})\} \ar@/_2pc/[uul]^{t^{c,d}_{\mathfrak{p}}} \ar[ul]^{\jmath_{c,d}}
}$$

Finally we have the following diagram:\\

$$\xymatrixcolsep{3pc}\xymatrix{
A \ar[r]^{\eta_{A\setminus \mathfrak{p}}} \ar[d]_{q_{\sqrt[\infty]{(b)}}} & A_{\mathfrak{p}} \ar@{->>}[d]^{t_{\mathfrak{p}}} \ar@{->>}[r]^{q_{\m_{\mathfrak{p}}}} & \dfrac{A_{\mathfrak{p}}}{\m_{\mathfrak{p}}} \ar@{->>}[dl]_{\overline{t_{\mathfrak{p}}}}\ar@/^3pc/[ddl]^{\psi_{\mathfrak{p}}}\\
\left( \dfrac{A}{\sqrt[\infty]{(b)}}\right)\{ (a + \sqrt[\infty]{(b)})^{-1}\} \ar[d]^{\eta_{b,\mathfrak{p}}}\ar[r]^{\jmath_{a,b}} & L \ar[d]^{\Psi_{\mathfrak{p}}}& \\
\left( \dfrac{A}{\mathfrak{p}}\right)\{(a+\mathfrak{p})^{-1} \} \ar[r]^{\jmath_a} & \left( \dfrac{A}{\mathfrak{p}}\right)\left\{ \dfrac{A}{\mathfrak{p}} \setminus \{ 0 + \mathfrak{p}\}^{-1}\right\}&
}$$

where $\psi_{\mathfrak{p}}$ is given by \textbf{Theorem 24}, p. 96 of \cite{BM2}.\\

\textbf{Claim:} The following triangle commutes:

$$\xymatrixcolsep{3pc}\xymatrix{
 & \dfrac{A_{\mathfrak{p}}}{\m_{\mathfrak{p}}} \ar[dl]_{\overline{t_{\mathfrak{p}}}} \ar@/^1pc/[ddl]^{\psi_{\mathfrak{p}}}\\
L \ar[d]^{\Psi_{\mathfrak{p}}} & \\
\left( \dfrac{A}{\mathfrak{p}}\right) \left\{ \dfrac{A}{\mathfrak{p}}\setminus \{ 0 + \mathfrak{p}\}^{-1}\right\} &
 }$$

Since $q_{\m_{\mathfrak{p}}}$ is an epimorphism, we have:
$$\Psi_{\mathfrak{p}} \circ t_{\mathfrak{p}} = \Psi_{\mathfrak{p}} \circ \overline{t_{\mathfrak{p}}} \iff (\Psi_{\mathfrak{p}} \circ t_{\mathfrak{p}}) \circ q_{\m_{\mathfrak{p}}} = (\Psi_{\mathfrak{p}} \circ \overline{t_{\mathfrak{p}}}) \circ q_{\m_{\mathfrak{p}}}$$

Now, by the universal property of $\eta_{A \setminus \mathfrak{p}}: A \to A_{\mathfrak{p}}$, there is \textit{a unique} $\mathcal{C}^{\infty}-$homomorphism $\xi : A_{\mathfrak{p}} \to \left( \dfrac{A}{\mathfrak{p}}\right) \left\{ \dfrac{A}{\mathfrak{p}}\setminus \{ 0 + \mathfrak{p}\}^{-1}\right\}$ such that:
$$\xymatrixcolsep{3pc}\xymatrix{A \ar[dr]_{\jmath_a \circ \eta_{b,\mathfrak{p}} \circ t_{a,b}}\ar[r]^{\eta_{A\setminus \mathfrak{p}}} & A_{\mathfrak{p}} \ar[d]^{\xi}\\
  & \left( \dfrac{A}{\mathfrak{p}}\right) \left\{ \dfrac{A}{\mathfrak{p}}\setminus \{ 0 + \mathfrak{p}\}^{-1}\right\}}$$
commutes, so $(\Psi_{\mathfrak{p}} \circ t_{\mathfrak{p}}) \circ q_{\m_{\mathfrak{p}}} = \xi =  (\Psi_{\mathfrak{p}} \circ \overline{t_{\mathfrak{p}}}) \circ q_{\m_{\mathfrak{p}}}$ and the former triangle commutes.\\

Thus, since $\psi_{\mathfrak{p}} = \Psi_{\mathfrak{p}} \circ \overline{t_{\mathfrak{p}}}$ where $\overline{t_{\mathfrak{p}}}$ is an epimorphism which is also a section, it follows that $\overline{t_{\mathfrak{p}}}$ and $\Psi_{\mathfrak{p}}$ are both $\mathcal{C}^{\infty}-$isomorphisms, as we wanted to show.
\end{proof}


\begin{remark} \label{vn-estavel} If $A$ is a {\em von Neumann regular} $C^\infty$-ring,  then for each $a, b \in V$, let $e, f$ (\textit{ditto}) be the {\em unique} idempotents of $V$ such that $(a) =(e)$ and $(b) = (f)$. Then:
$$D^\infty(a) \cap Z^\infty(b) = D^\infty(e) \cap Z^\infty(f) = D^\infty(e) \cap D^\infty(1-f) = D^\infty(e.(1-f))$$
and
$$\left(\frac{A}{\sqrt[\infty]{(b)}}\right)\{ (a + \sqrt[\infty]{(b)})^{-1}\} = \left( \frac{A}{\sqrt[\infty]{(f)}}\right)\{ (e + \sqrt[\infty]{(f)})^{-1}\} =$$
$$ = \left( \dfrac{A}{(f)} \right) \{ (e + {(f)})^{-1}\} \cong A\{(1-f)^{-1}\}\{\eta_{1-f}(e)^{-1}\} \cong A\{((1-f)\cdot e)^{-1}\}$$
Thus the (pre)sheaf basis $F_A$ is isomorphic to the ``affine'' sheaf basis $\Sigma_A$.

\end{remark}

Now we proceed as follows:\\

$\bullet$ For each $C^\infty$-ring $A$, $F_A: (\mathcal{B}({\rm Spec}^{\infty-{\rm const}}\,(A)), \subseteq)^{\rm op} \rightarrow \mathcal{C}^{\infty}{\rm \bf Rng}$ is a presheaf on the (canonical) basis of the constructible topology in ${\rm Spec}^\infty(A)$ and with  the stalks $\cong k_{\p}(A)$, $\p \in {\rm Spec}^\infty(A)$. Moreover there is a canonical $C^\infty$-homomorphism $\phi_A : A \to F_A({\rm Spec}^{\infty-{\rm const}}\,(A))) \cong A/\sqrt[\infty]{(0)}$, in such a way that we obtain a functor $A \overset{\mathbb{F}}{\mapsto}   F_A({\rm Spec}^{\infty-{\rm const}}\,(A)))$ and a natural transformation $\phi : {\rm id} \Rightarrow \mathbb{F}$.\\

$\bullet$ For each $C^\infty$-ring $A$, take $G_A$ the sheaf-basis functor associated to $F_A$ - both are defined on the basis of the constructible topology of the $C^\infty$-spectrum of $A$ and both share the same (up to isomorphisms) stalks. Moreover, since there is a natural transformation $F_A \to G_A$,  there is a canonical $C^\infty$-homomorphism $\gamma_A : A \to G_A({\rm Spec}^{\infty-{\rm const}}\,(A)))$, in such a way that we obtain a functor $A \overset{\mathbb{G}}{\mapsto}   G_A({\rm Spec}^{\infty-{\rm const}}\,(A)))$ and a natural transformation $\gamma : id \to \mathbb{G}$.\\

$\bullet$ For each $C^\infty$-ring $A$, take $H_A$ the (unique up to isomorphism) sheaf extension $G_A$ to the basis of constructible topology  into all the opens sets of this topology, thus $H_A \upharpoonright \cong G_A$, and   both functor keeps the same stalks (up to isomorphism).
Moreover, since there is a natural transformation (isomorphism) $G_A \to H_A\upharpoonright$,  there is a canonical $C^\infty$-homomorphism $\mu_A : A \to H_A({\rm Spec}^{\infty-{\rm const}}\,(A)))$, in such a way that we obtain a functor $A \overset{\mathbb{H}}{\mapsto}   H_A({\rm Spec}^{\infty-{\rm const}}\,(A)))$ and a natural transformation $\mu : {\rm id} \Rightarrow \mathbb{H}$.\\

$\bullet$ Since $H_A$ is a sheaf of $C^\infty$-rings, defined over all the opens of a boolean space, and whose stalks are $C^\infty$-fields, then $\mathbb{H}(A) = H_A({\rm Spec}^{\infty-{\rm const}}\,(A)))$ is a Von Neumann regular $C^\infty$-ring. Thus $A \overset{\mathbb{H}}{\mapsto}   H_A({\rm Spec}^{\infty-{\rm const}}\,(A)))$ determines a functor $C^\infty{\rm \bf Rng} \to C^\infty{\rm \bf vNRng}$ and we have a natural transformation $\mu : {\rm id} \Rightarrow i \circ \mathbb{H}$.\\

We are ready to (re)state the:

\begin{theorem}\label{bis}  {\bf (bis)} The inclusion functor $C^\infty{\rm \bf vNRng} \hookrightarrow C^\infty{\rm \bf Rng}$ has a left adjoint functor ${\rm  vN} : C^\infty{\rm \bf Rng} \to C^\infty{\rm \bf vNRng}$. In more details:  for $A$ be any $\mathcal{C}^{\infty}-$ring, the pair $(\mathbb{H}(A)), \mu_A: A \to \mathbb{H}(A)))$  is the $\mathcal{C}^{\infty}-$von Neumann regular hull of $A$, that is, for every von Neumann-regular $\mathcal{C}^{\infty}-$ring $V$ and for every $\mathcal{C}^{\infty}-$homomorphism $f: A \to V$ there is a unique $\mathcal{C}^{\infty}-$homomorphism $\widetilde{f}: A_{\omega} \rightarrow V$ such that the following diagram commutes:

$$\xymatrixcolsep{3pc}\xymatrix{
A \ar[r]^{\mu_A} \ar[dr]_{f} & \Gamma(\Sigma(A)) \ar@{-->}[d]^{\exists ! \widetilde{f}}\\
 & V
}$$

\end{theorem}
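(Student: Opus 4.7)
The plan is to verify that the functor $\mathbb{H}: \mathcal{C}^{\infty}{\rm \bf Rng} \to \mathcal{C}^{\infty}{\rm \bf vNRng}$ together with the natural transformation $\mu : {\rm id} \Rightarrow \imath \circ \mathbb{H}$ constructed in the bullet points preceding the theorem satisfies the three hypotheses \textbf{(E)}, \textbf{(U)}, \textbf{(U')} of Theorem \ref{proposition12i}. Once this is done, Theorem \ref{proposition12i} directly yields that every $\mathcal{C}^{\infty}$-homomorphism $f: A \to V$ into a von Neumann regular $V$ factors uniquely through $\mu_A$, which is the universal property defining the von Neumann regular hull; the adjunction $\mathbb{H} \dashv \imath$ then follows from the standard fact that a functor together with a natural transformation whose components are universal arrows gives rise to a left adjoint.

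First I would verify \textbf{(E)}. Given a von Neumann regular $V$, Remark \ref{vn-estavel} shows that the presheaf-basis $F_V$ is naturally isomorphic to the affine sheaf-basis $\Sigma_V$, so the sheafification $H_V$ coincides (up to isomorphism) with the structure sheaf of $V$. By Theorem \ref{ota}, ${\rm Spec}^{\infty}(V)$ is already Boolean, so the constructible topology equals the Zariski topology and no non-trivial booleanization takes place. Combining this with the global sections formula $\Gamma(\Sigma_V) \cong V$ of Theorem 1.7 of \cite{rings2}, one obtains $\mathbb{H}(V) \cong V$, and a routine unwinding of the construction of $\mu_V$ through $\phi_V, \gamma_V, \mu_V$ shows that under this isomorphism $\mu_V$ becomes the identity, hence is trivially a section.

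Next I would verify \textbf{(U)} and \textbf{(U')} simultaneously. By construction $H_A$ is a sheaf of $\mathcal{C}^{\infty}$-rings on the Boolean space ${\rm Spec}^{\infty-{\rm const}}\,(A)$ whose stalk at $\mathfrak{p}$ is the $\mathcal{C}^{\infty}$-field $k_{\mathfrak{p}}(A)$. Since the stalks are fields, $\mathbb{H}(A)$ is von Neumann regular (as noted in the last bullet), and by Theorem \ref{ota} its smooth spectrum is Boolean. The key step is to identify ${\rm Spec}^{\infty}\,(\mathbb{H}(A))$ with ${\rm Spec}^{\infty-{\rm const}}\,(A)$: for each point $\mathfrak{p}$ of the base, the kernel of the stalk evaluation $\mathbb{H}(A) \twoheadrightarrow k_{\mathfrak{p}}(A)$ is a maximal $\mathcal{C}^{\infty}$-radical ideal $\widetilde{\mathfrak{p}}$, and one shows that $\mathfrak{p} \mapsto \widetilde{\mathfrak{p}}$ is a homeomorphism matching basic clopens $D^{\infty}(a) \cap Z^{\infty}(b)$ on one side with $D^{\infty}(s)$ for appropriate global sections $s$ on the other. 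Under this identification, the residue field at $\widetilde{\mathfrak{p}}$ is precisely $k_{\mathfrak{p}}(A)$, giving \textbf{(U')}, and $\mu_A^{\star}$ becomes the canonical homeomorphism between ${\rm Spec}^{\infty-{\rm const}}\,(A)$ and ${\rm Spec}^{\infty}\,(A)$ of Theorem \ref{11i}(iii), giving \textbf{(U)}.

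The main obstacle is the spectral identification just described: one must show that \emph{every} prime $\mathcal{C}^{\infty}$-radical ideal of $\mathbb{H}(A)$ arises as some $\widetilde{\mathfrak{p}}$ and that the bijection is spectral. This is a $\mathcal{C}^{\infty}$-version of the classical Pierce-type representation of a von Neumann regular ring as the ring of global sections of a sheaf of fields over a Boolean space, and it can be handled using the abundance of idempotents in $\mathbb{H}(A)$ coming from the clopen basis of ${\rm Spec}^{\infty-{\rm const}}\,(A)$ (each clopen producing, via the sheaf condition, a global idempotent that cuts out exactly the corresponding support). Once this identification is in place, the uniqueness of $\widetilde{f}$ is supplied by Theorem \ref{proposition12i}(ii) and its existence by Theorem \ref{proposition12i}(i), completing the proof.
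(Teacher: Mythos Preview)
Your proposal is correct and takes essentially the same approach as the paper: both reduce the theorem to checking the hypotheses \textbf{(E)}, \textbf{(U)}, \textbf{(U')} of Theorem~\ref{proposition12i} for the functor $\mathbb{H}$ and the natural transformation $\mu$. The paper's own proof is in fact the single sentence ``We just have observed above that all the conditions in the Theorem~\ref{proposition12i} hold,'' so your write-up is considerably more explicit---in particular your use of Remark~\ref{vn-estavel} for \textbf{(E)} and your identification of the Pierce-type spectral comparison as the substantive point behind \textbf{(U)} and \textbf{(U')} spell out exactly what the paper leaves to the reader.
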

\begin{proof} We just have observed above that all the conditions in the \textbf{Theorem \ref{proposition12i}} hold.
\end{proof}

We finish this subsection with some general applications of the adjunction  above and one of its specific ``sheaf-theorectic'' construction of the left adjoint ${\rm vN} : C^\infty{\rm \bf Rng} \to C^\infty{\rm \bf vNRng}$.

\begin{proposition} \label{cor27AM} The functor ${\rm  vN}$ preserves all colimits. In particular it preserves:
\begin{itemize}
\item{directed inductive limits;}
\item{coproducts (= $\infty-$tensor products in ${\rm \bf C^\infty-Rng}$);}
\item{coequalizers/quotients.}
\end{itemize}
\end{proposition}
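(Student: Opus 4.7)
My plan is to derive this as an immediate consequence of Theorem \ref{bis}, which establishes that ${\rm vN} : \mathcal{C}^{\infty}{\rm \bf Rng} \to \mathcal{C}^{\infty}{\rm \bf vNRng}$ is a left adjoint to the inclusion functor $\imath : \mathcal{C}^{\infty}{\rm \bf vNRng} \hookrightarrow \mathcal{C}^{\infty}{\rm \bf Rng}$. The core fact I would invoke is the standard categorical principle that left adjoints preserve all colimits which exist in the source category (this is a consequence of the uniqueness of right adjoints, or equivalently, that $\mathrm{Hom}(\mathrm{vN}(\varinjlim A_i), B) \cong \mathrm{Hom}(\varinjlim A_i, \imath(B)) \cong \varprojlim \mathrm{Hom}(A_i, \imath(B)) \cong \varprojlim \mathrm{Hom}(\mathrm{vN}(A_i), B) \cong \mathrm{Hom}(\varinjlim \mathrm{vN}(A_i), B)$ naturally in $B$, so Yoneda gives $\mathrm{vN}(\varinjlim A_i) \cong \varinjlim \mathrm{vN}(A_i)$).

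Next I would observe that $\mathcal{C}^{\infty}{\rm \bf Rng}$ is cocomplete: being (equivalent to) the category of models of an (infinitary) algebraic theory, it has all small colimits, and in particular it has the three specific types of colimits singled out in the statement. Directed inductive limits exist as colimits over filtered posets; coproducts in $\mathcal{C}^{\infty}{\rm \bf Rng}$ are realized by the $\infty$-tensor product (as recorded in \cite{BM1}, \cite{BM2}); coequalizers exist because the category admits quotients by congruences, and a coequalizer of $f, g : A \rightrightarrows B$ is the quotient of $B$ by the $\mathcal{C}^{\infty}$-congruence generated by $\{(f(a), g(a)) : a \in A\}$.

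Putting these two facts together yields the three itemized preservation statements at once: $\mathrm{vN}$ preserves filtered colimits, coproducts, and coequalizers (hence quotients), simply because it preserves every colimit diagram that exists in $\mathcal{C}^{\infty}{\rm \bf Rng}$. The proof is therefore a two-line argument: cite Theorem \ref{bis} to produce the adjunction $\mathrm{vN} \dashv \imath$, and cite the general categorical fact that left adjoints preserve colimits. There is no real obstacle; the only thing one might want to double-check is that the statement ``coproducts = $\infty$-tensor products'' is precisely the construction from the companion papers, but this is already standard in the setting.
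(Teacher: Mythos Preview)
Your proposal is correct and follows essentially the same approach as the paper: the paper's proof opens with the single line ``Since it is a left adjoint, ${\rm vN}$ preserves all colimits,'' which is exactly your argument via Theorem~\ref{bis} (equivalently Theorem~\ref{bbis}). The only difference is that the paper then adds a concrete unpacking of what preservation of quotients amounts to---namely, for an ideal $I \subseteq A$ it identifies $\ker({\rm vN}(q_I))$ inside $\prod_{\mathfrak{p}} k_{\mathfrak{p}}(A)$ and notes in particular that ${\rm vN}(A) \cong {\rm vN}(A/I)$ whenever $I \subseteq \sqrt[\infty]{(0)}$---but this is explanatory detail rather than additional proof content.
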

\begin{proof} Since it is a left adjoint, ${\rm  vN}$ preserves all colimits. We explain the meaning of the preservation of quotients. Consider the induced homomorphism $vN(q_I) : {\rm  vN}(A) \rightarrow {\rm  vN}(A/I)$: it is surjective since the coequalizers in ${\rm \bf C^\infty-Rng}$ and ${\rm \bf  C^\infty{\rm \bf vNRng}}$ coincide with the surjective homomorphisms. Then $\bar{I} := \ker({\rm  vN}(q_{I})) \subseteq {\rm  vN}(A)$ is such that $\overline{{\rm  vN}(q_{I})} :{\rm  vN}(A)/\bar{I} \stackrel{\cong}{\rightarrow} {\rm  vN}(A/I)$ and, since
$${\rm  vN}(A) \rightarrowtail \prod_{\mathfrak{p} \in {\rm  Spec^\infty}(A)} k_{\mathfrak{p}}(A),$$  $\bar{I}$ can be identified with   ${\rm  vN}(A) \cap \{ \vec{x} = (x_{\mathfrak{p}})_{\mathfrak{p} \in {\rm  Spec^\infty}(A)} \in \prod_{\mathfrak{p} \in {\rm  Spec^\infty}(A)} k_{\mathfrak{p}}(A) : \forall \mathfrak{p} \supseteq I, x_{\mathfrak{p}}=0 \in k_{\mathfrak{p}}(A)\}$. Note in particular, that if $I \subseteq \sqrt[\infty]{(0)}$, then  ${\rm  vN}(q_{I}) : {\rm  vN}(A) \stackrel{\cong}{\rightarrow} {\rm  vN}(A/I)$, thus $\bar{I} = \{0\}$.
\end{proof}

The following results are specific to the functor ${\rm  vN}$, i.e., they are not general consequences of it being a left adjoint functor.

\begin{proposition}\label{prp28MA} ${\rm  vN}$ preserves localizations. More precisely, given a $\mathcal{C}^{\infty}$-ring $A$ and a subset $S \subseteq A$, denote $S':= \eta_A[S] \subseteq {\rm  vN}(A)$ the corresponding  subset and let $\eta_A^S : A\{S^{-1}\} \rightarrow {\rm  vN}(A)\{{S'}^{-1}\}$ be the induced arrow, i.e., $\eta_A^S$ is the unique homomorphism such that $\eta_A^S \circ \sigma(A)_S = \sigma({\rm  vN}(A))_{S'} \circ \eta_A$. Then $\eta_{A\{S^{-1}\}} : A\{S^{-1}\} \rightarrow {\rm  vN}(A\{S\}^{-1})$ thus it is isomorphic to the arrow $\eta_A^S$, through the obvious  pair of inverse (iso)morphisms ${\rm  vN}(A\{S\}^{-1}) \rightleftarrows {\rm  vN}(A)\{S'\}^{-1}$.
\end{proposition}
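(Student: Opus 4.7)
The strategy is to verify that the pair $({\rm vN}(A)\{{S'}^{-1}\},\, \eta_A^S)$ itself enjoys the universal property characterizing the von Neumann-regular hull of $A\{S^{-1}\}$; by the uniqueness clause of that universal property, there is then a unique isomorphism between this pair and $({\rm vN}(A\{S^{-1}\}),\,\eta_{A\{S^{-1}\}})$, which is precisely the content of the proposition.

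First I would record the two preliminary facts. The $\mathcal{C}^\infty$-ring ${\rm vN}(A)\{{S'}^{-1}\}$ is von Neumann-regular by \textbf{Proposition \ref{vNRingsClosedUnderLocalizations}}, since localizations of vN-regular $\mathcal{C}^{\infty}$-rings are again vN-regular. Second, the localization morphism $\sigma(A)_S : A \to A\{S^{-1}\}$ is an epimorphism in $\mathcal{C}^{\infty}{\rm \bf Rng}$ (a standard feature of calculi of fractions), a fact I will use repeatedly to cancel it.

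Next, given a vN-regular $\mathcal{C}^{\infty}$-ring $V$ and a $\mathcal{C}^{\infty}$-homomorphism $g : A\{S^{-1}\} \to V$, I would construct the extension $\widetilde{g} : {\rm vN}(A)\{{S'}^{-1}\} \to V$ in two stages. Applying the universal property of the vN-hull to $g \circ \sigma(A)_S : A \to V$ yields a unique $\overline{g}: {\rm vN}(A) \to V$ with $\overline{g} \circ \eta_A = g \circ \sigma(A)_S$. For each $s \in S$, the element $\overline{g}(\eta_A(s)) = g(\sigma(A)_S(s))$ is a unit of $V$, because $\sigma(A)_S(s)$ is a unit of $A\{S^{-1}\}$. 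Hence $\overline{g}$ inverts $S'$, and the universal property of $\sigma({\rm vN}(A))_{S'}$ delivers a unique $\widetilde{g}$ with $\widetilde{g} \circ \sigma({\rm vN}(A))_{S'} = \overline{g}$. A short diagram chase using the defining equation of $\eta_A^S$ and the fact that $\sigma(A)_S$ is epic gives $\widetilde{g} \circ \eta_A^S = g$.

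Finally I would verify uniqueness: if $\widetilde{g}'$ also satisfies $\widetilde{g}' \circ \eta_A^S = g$, composing on the right with $\sigma(A)_S$ and using the defining equation $\eta_A^S \circ \sigma(A)_S = \sigma({\rm vN}(A))_{S'} \circ \eta_A$ shows that $\widetilde{g}' \circ \sigma({\rm vN}(A))_{S'}$ is an extension through $\eta_A$ of $g \circ \sigma(A)_S$, so by uniqueness in the vN-hull universal property it coincides with $\overline{g}$. The uniqueness clause of the localization universal property then forces $\widetilde{g}' = \widetilde{g}$. This completes the verification that $({\rm vN}(A)\{{S'}^{-1}\}, \eta_A^S)$ is a vN-hull of $A\{S^{-1}\}$, yielding the desired canonical isomorphism with $({\rm vN}(A\{S^{-1}\}), \eta_{A\{S^{-1}\}})$. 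The main subtlety is merely bookkeeping: making sure the two universal properties (of the vN-hull and of the localization) are applied in the correct order and that the epimorphism $\sigma(A)_S$ is invoked to pass from equalities after precomposition to the equalities of morphisms themselves; there is no genuine obstacle.
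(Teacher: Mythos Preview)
Your proposal is correct and follows essentially the same approach as the paper: you verify directly that $({\rm vN}(A)\{{S'}^{-1}\},\eta_A^S)$ satisfies the universal property of the vN-hull of $A\{S^{-1}\}$, combining the universal property of $\eta_A$ with that of the localization $\sigma({\rm vN}(A))_{S'}$, and invoking \textbf{Proposition~\ref{vNRingsClosedUnderLocalizations}} for the vN-regularity of the target. The paper's write-up is slightly slicker in that it phrases everything as a chain of hom-set bijections (so the epimorphism property of $\sigma(A)_S$ is absorbed into the statement that precomposition with a localization map is a bijection onto the maps inverting $S$), whereas you unwind this into an explicit existence-and-uniqueness argument; but the content is identical.
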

\begin{proof}
First of all, note that ${\rm  vN}(A)\{S'\}^{-1}$ is a vN-regular ring, cf. \textbf{Proposition \ref{vNRingsClosedUnderLocalizations}}.
For each vN-regular ring $V$, the bijection \ $( - \circ \eta_A) : {\rm \bf  \mathcal{C}^{\infty}{\rm \bf vNRng}}({\rm  vN}(A),V) \stackrel{\cong}{\rightarrow} {\rm \bf \mathcal{C}^{\infty}-Rng}(A,V)$ \ restricts to the bijection:\\

$( - \circ \eta_A){\upharpoonright} : \{H \in {\rm \bf  \mathcal{C}^{\infty}{\rm \bf vNRng}}({\rm  vN}(A),V): H[S'] \subseteq V^{\times}\}$ \\ \noindent .  \hspace{33mm} $\ \ \ \ \ \ \ \ \ \ \stackrel{\cong}{\rightarrow} \{h \in {\rm \bf \mathcal{C}^{\infty}-Rng}(A,V): h[S]\subseteq V^{\times}\}.$

Composing the last bijection with the bijections below, obtained from the universal property of localizations,
$$ (- \circ \sigma(A)_S)^{-1}  : \{h \in {\rm \bf \mathcal{C}^{\infty}-Rng}(A,V) : h[S] \subseteq V^{\times}\} \stackrel{\cong}{\rightarrow}  {\rm \bf \mathcal{C}^{\infty}-Rng}(A[S]^{-1}, V),$$
\begin{multline*}( - \circ \sigma({\rm  vN}(A))_{S'})  : {\rm \bf \mathcal{C}^{\infty}-Rng}({\rm  vN}(A)\{S'^{-1}\},V) \stackrel{\cong}{\rightarrow}\\
\stackrel{\cong}{\rightarrow} \{H \in {\rm \bf \mathcal{C}^{\infty}-Rng}({\rm  vN}(A),V): H[S'] \subseteq V^{\times}\}\end{multline*}
we obtain, since ${\rm \bf  \mathcal{C}^{\infty}{\rm \bf vNRng}}$ is a full subcategory of ${\rm \bf \mathcal{C}^{\infty} Rng}$, the bijection
$$( - \circ \eta^S_A) : {\rm \bf  \mathcal{C}^{\infty}{\rm \bf vNRng}}({\rm  vN}(A)\{S'^{-1}\},V) \stackrel{\cong}{\rightarrow} {\rm \bf \mathcal{C}^{\infty} Rng}(A\{S^{-1}\},V).$$
Summing up, the arrow $\eta_A^S : A\{S^{-1}\} \rightarrow {\rm  vN}(A)\{S'^{-1}\}$ satisfies the universal property of vN-regular hull, thus it is isomorphic to the arrow $\eta_{A\{S^{-1}\}} : A\{S^{-1}\} \rightarrow {\rm  vN}(A\{S^{-1}\})$.
\end{proof}

The following result shows us how useful is  the sheaf-theoretic description of the vN-hull of a $\mathcal{C}^{\infty}$-ring.\\

\begin{proposition}\label{prp29MA} ${\rm  vN}$ preserves {\em finite} products.
More precisely, let $I$ be a finite set and $\{ A_i : i \in  I\}$ any family of $\mathcal{C}^{\infty}$-rings. Denote $\pi_j : \prod_{i \in I} A_i \twoheadrightarrow A_j$ the projection homomorphism, $j \in I$. Then $\prod_{i \in I} \eta_{A_i} :  \prod_{i \in I} A_i \rightarrow \prod_{i \in I} {\rm  vN}(A_i)$ satisfies the universal property of vN-regular hull, thus it is isomorphic to the arrow $\eta_{\prod_{i \in I} A_i} : \prod_{i \in I} A_i \rightarrow {\rm  vN}(\prod_{i \in I} A_i)$, through the obvious  pair of inverse (iso)morphisms ${\rm  vN}(\prod_{i \in I} A_i)  \rightleftarrows \prod_{i \in I} {\rm  vN}(A_i)$.
\end{proposition}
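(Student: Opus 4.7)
The strategy is to verify that the pair $\bigl(\prod_{i \in I} \mathrm{vN}(A_i),\ \prod_{i \in I} \eta_{A_i}\bigr)$ satisfies the universal property defining the vN-regular hull of $\prod_{i \in I} A_i$; by uniqueness of initial arrows this will force the isomorphism. First, $\prod_{i \in I} \mathrm{vN}(A_i)$ is itself a vN-regular $\mathcal{C}^{\infty}$-ring, by \textbf{Proposition \ref{proposition6}} (finite products are a special case of limits). So given a vN-regular $V$ and a $\mathcal{C}^{\infty}$-homomorphism $f : \prod_{i \in I} A_i \to V$, the task is to produce a unique $\widetilde{f} : \prod_{i \in I} \mathrm{vN}(A_i) \to V$ with $\widetilde{f} \circ (\prod_i \eta_{A_i}) = f$.

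The key device is the orthogonal decomposition of $V$ induced by $f$. Let $\mathbf{e}_i \in \prod_j A_j$ denote the idempotent with $1$ in coordinate $i$ and $0$ elsewhere, so that the $\mathbf{e}_i$ are pairwise orthogonal idempotents summing to $1$. Their images $u_i := f(\mathbf{e}_i) \in V$ are again pairwise orthogonal idempotents with $\sum_i u_i = 1$. Then by iterating \textbf{Lemma \ref{destino}} one obtains a canonical $\mathcal{C}^{\infty}$-ring decomposition
\[
V \;\cong\; \prod_{i \in I} V \cdot u_i \;\cong\; \prod_{i \in I} \frac{V}{(1 - u_i)},
\]
and by \textbf{Remark \ref{agp}} each factor $V_i := V \cdot u_i$ is a vN-regular $\mathcal{C}^{\infty}$-ring. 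Because $\iota_i : A_i \to \prod_j A_j$ (the obvious coordinate inclusion, sending $1_{A_i}$ to $\mathbf{e}_i$) is a unital $\mathcal{C}^{\infty}$-homomorphism once $A_i$ is viewed with unit $\mathbf{e}_i$, the composite $f_i := \pi_i^V \circ f \circ \iota_i : A_i \to V_i$ is a $\mathcal{C}^{\infty}$-homomorphism, where $\pi_i^V : V \twoheadrightarrow V_i$ is the projection from the decomposition.

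By the universal property of $\eta_{A_i} : A_i \to \mathrm{vN}(A_i)$ (\textbf{Theorem \ref{bbis}}), each $f_i$ extends uniquely to a $\mathcal{C}^{\infty}$-homomorphism $\widetilde{f}_i : \mathrm{vN}(A_i) \to V_i$. Setting $\widetilde{f} := \prod_{i \in I} \widetilde{f}_i : \prod_i \mathrm{vN}(A_i) \to \prod_i V_i \cong V$ gives the desired arrow; verifying $\widetilde{f} \circ (\prod_i \eta_{A_i}) = f$ reduces, coordinate by coordinate, to the identities $\widetilde{f}_i \circ \eta_{A_i} = f_i$ together with the fact that $f$ is determined by its restrictions to the principal ideals $(\mathbf{e}_i)$.

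For uniqueness, suppose $g : \prod_i \mathrm{vN}(A_i) \to V$ is another factorization. The idempotent $\eta_{\prod A_j}(\mathbf{e}_i) \in \prod_j \mathrm{vN}(A_j)$ is still the $i$-th standard orthogonal idempotent, so $g$ restricts, via the orthogonal decompositions on both sides, to homomorphisms $g_i : \mathrm{vN}(A_i) \to V_i$ satisfying $g_i \circ \eta_{A_i} = f_i$; the uniqueness clause of the universal property for each $A_i$ then forces $g_i = \widetilde{f}_i$, hence $g = \widetilde{f}$. The main obstacle is ensuring that all the decompositions and restrictions genuinely live in $\mathcal{C}^{\infty}{\rm \bf Rng}$ rather than just in $\mathbf{CRing}$: this is exactly what \textbf{Lemma \ref{destino}} and \textbf{Remark \ref{agp}} provide, since the idempotents involved split the $\mathcal{C}^{\infty}$-structure cleanly. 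A secondary, conceptually cleaner route (worth recording) is the sheaf-theoretic one: $\mathrm{Spec}^{\infty\text{-const}}(\prod_i A_i)$ is the finite disjoint union $\coprod_i \mathrm{Spec}^{\infty\text{-const}}(A_i)$, so the sheaf $H_{\prod_i A_i}$ built in the previous subsection is literally the product of the sheaves $H_{A_i}$, and taking global sections yields $\mathrm{vN}(\prod_i A_i) \cong \prod_i \mathrm{vN}(A_i)$ directly from \textbf{Theorem \ref{bis}}.
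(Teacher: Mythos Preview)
Your primary argument (direct verification of the universal property via orthogonal idempotent decomposition of $V$) takes a genuinely different route from the paper. The paper's proof is entirely sheaf-theoretic --- it is precisely your ``secondary route'' at the end: one reduces to $I = \{1,2\}$, checks that $\mathrm{Spec}^{\infty}(A_1 \times A_2)$ and its constructible refinement decompose as disjoint unions, that stalks and basic-open sections split accordingly, and hence that the global sections of the vN-sheaf over the product give the product of the vN-hulls. Your idempotent approach is more elementary and portable; the paper's approach is chosen deliberately to illustrate that the sheaf-theoretic description of $\mathrm{vN}$ is a useful computational tool (as announced just before \textbf{Proposition \ref{prp29MA}}).

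There is one genuine slip in your existence step: the coordinate inclusion $\iota_i : A_i \to \prod_j A_j$ is \emph{not} a $\mathcal{C}^{\infty}$-homomorphism, regardless of how one ``views the unit''. A $\mathcal{C}^{\infty}$-homomorphism must preserve every nullary operation (every real constant $c$), and for $j \neq i$ the $j$-th coordinate of $\iota_i(c \cdot 1_{A_i})$ is $0$, not $c \cdot 1_{A_j}$. Your formula $f_i = \pi_i^V \circ f \circ \iota_i$ still produces the correct set-theoretic map, but the correct justification is that the $\mathcal{C}^{\infty}$-homomorphism $\pi_i^V \circ f : \prod_j A_j \to V_i$ annihilates $1 - \mathbf{e}_i$ (since $\pi_i^V(1 - u_i) = (1-u_i)u_i = 0$ in $V_i$) and therefore factors, via the Theorem of the Homomorphism, through $(\prod_j A_j)/(1 - \mathbf{e}_i) \cong A_i$; that factorization is your $f_i$, now honestly a $\mathcal{C}^{\infty}$-homomorphism. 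With this adjustment the remainder of your argument (including uniqueness, where you already phrase things in terms of the decompositions rather than $\iota_i$) goes through unchanged.
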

\begin{proof}
(Sketch) 
First of all, note that $\prod_{i \in I} {\rm  vN}(A_i)$ is a vN-regular $\mathcal{C}^{\infty}$-ring. If $I = \emptyset$, then $\prod_{i \in I} A_i$, $\prod_{i \in I} {\rm  vN}(A_i)$ and ${\rm  vN}(\prod_{i \in I} A_i)$ are isomorphic to the trivial $\mathcal{C}^{\infty}$-ring $\{0\}$, thus the result holds in this case.\\

By induction, we only need to see that $({\rm  vN}(\pi_1), {\rm  vN}(\pi_2)) : {\rm  vN}(A_1 \times A_2) \stackrel{\cong}{\rightarrow} {\rm  vN}(A_1) \times {\rm  vN}(A_2)$. But we have:\\

$\bullet$ \ $[\pi_1^\star, \pi_2^\star] : {\rm  Spec}^{\infty}(A_1) \sqcup {\rm  Spec^{\infty}}(A_2) \stackrel{\approx}{\rightarrow} {\rm  Spec}^{\infty}(A_1 \times A_2)$, \ $(p_i, i) \mapsto \pi_i^{-1}[p_i]$, $i =1,2$. \\

$\bullet$ \  $[\pi_1^\star, \pi_2^\star] : {\rm Spec}^{\infty-{\rm const}}(A_1) \sqcup {\rm Spec}^{\infty-{\rm const}}(A_2) \stackrel{\approx}{\rightarrow} {\rm Spec}^{\infty-{\rm const}}(A_1 \times A_2)$.\\

$\bullet$ \ Let $a:=(a_1,a_2) \in A_1 \times A_2$ and $\bar{b} := \{(b_{1,1},b_{2,1}), \cdots, (b_{1,n},b_{2,n})\} \subseteq_{{\rm fin}}  A_1 \times A_2$, then $(A_1 \times A_2)_{a, \bar{b}} \ \cong \ {A_1}_{a_1,\bar{b}_1} \times {A_2}_{a_2,\bar{b}_2}$.\\

$\bullet$ \ $\sqcup_{p \in { \rm  Spe\mathcal{C}^{\infty}}(A_1 \times A_2)} k_{\mathfrak{p}}(A_1 \times A_2) \approx  (\sqcup_{\mathfrak{p}_1 \in {\rm Spec}^{\infty}(A_1)} k_{\mathfrak{p}_1}(A_1)) \sqcup  (\sqcup_{\mathfrak{p}_2 \in {\rm Spec}^{\infty}(A_1)} k_{\mathfrak{p}_2}(A_2))$.\\

$\bullet$ \ Let $U \in {\rm Open}({\rm Spec}^{\infty}(A_1 \times A_2))$, then $U \approx U_1 \sqcup U_2$, where $U_i = (\pi_i^\star)^{-1}[U] \in {\rm Open}({\rm Spec}^{\infty}(A_i))$.  Then any sheaf $S$ satisfies $S(A)(U) \ \cong\ S(A_1)(U_1) \times S(A_2)(U_2)$.\vspace{2mm} \\

\noindent Now the construction of ${\rm  vN}$ as the global sections of of a sheaf allows to conclude  ${\rm  vN}(A_1 \times A_2) = S(A_1 \times A_2)({\rm Spec}^{\infty}(A_1 \times A_2)) \ \cong\ (S(A_1)({\rm Spec}^{\infty}(A_1)) \times S(A_2)({\rm Spec}^{\infty}(A_2))) = {\rm  vN}(A_1) \times {\rm  vN}(A_2)$.
\end{proof}

\section{Von Neumann-regular $\mathcal{C}^{\infty}-$Rings and Boolean Algebras}

In this section we also apply von Neumann regular $\mathcal{C}^{\infty}$-ring to naturally represent Boolean Algebras in a strong sense: i.e., not only all Boolean algebras are isomorphic to the Boolean algebra of idempotents of a von Neumann regular $\mathcal{C}^{\infty}$-ring, as every homomorphism between such boolean algebras of idempotents is induced by a $\mathcal{C}^{\infty}$-homomorphism.

We start with the following general results:

\begin{proposition}\label{tio}Let $(L, \wedge, \vee, \leq)$ be any lattice. Then:
$$(\forall a \in L)(\forall b \in L)(\forall c \in L)(a \wedge (b \vee c) = (a \wedge b) \vee (a \wedge c))$$
if, and only if,
$$(\forall a \in L)(\forall b \in L)(\forall c \in L)(a \vee (b \wedge c) = (a \vee b) \wedge (a \vee c))$$
\end{proposition}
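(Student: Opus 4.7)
The statement is the classical duality between the two distributive laws in lattice theory, so the plan is to prove one implication by a short calculation and invoke duality (or repeat the symmetric argument) for the converse.

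For the forward direction, assume the $\wedge$-over-$\vee$ distributive law holds universally. The plan is to start from the right-hand side $(a \vee b) \wedge (a \vee c)$ and apply the assumed distributive law with the roles chosen so that $a \vee b$ plays the role of the outer factor. Concretely, I would first expand $(a \vee b) \wedge (a \vee c) = ((a \vee b) \wedge a) \vee ((a \vee b) \wedge c)$, then reduce the first summand via the absorption identity $(a \vee b) \wedge a = a$, and apply the distributive law a second time to the second summand, obtaining $(a \wedge c) \vee (b \wedge c)$. After reassociating under $\vee$, the term $a \vee (a \wedge c)$ collapses to $a$ by the dual absorption identity, leaving exactly $a \vee (b \wedge c)$.

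For the converse, the proof is strictly dual: swap $\wedge \leftrightarrow \vee$ throughout the argument above, using now the assumed $\vee$-over-$\wedge$ distributivity together with the two absorption laws $a \wedge (a \vee x) = a$ and $a \vee (a \wedge x) = a$. Alternatively, one can appeal to the general principle that the opposite $(L,\vee,\wedge,\geq)$ of a lattice is again a lattice in which the roles of meet and join are interchanged, so the second implication follows from the first applied to $L^{\rm op}$.

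There is no real obstacle: the only tools needed are associativity, commutativity and the two absorption identities, all of which are part of the axioms of a lattice. The ``hard'' step, if any, is simply spotting the correct order in which to apply absorption and distributivity; once one writes $(a \vee b) \wedge (a \vee c) = ((a \vee b) \wedge a) \vee ((a \vee b) \wedge c)$ the rest is forced.
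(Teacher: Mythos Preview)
Your argument is correct and is precisely the standard textbook proof; the paper does not give its own argument but simply refers the reader to \textbf{Lemma 6.3} of Davey--Priestley, which contains essentially the calculation you outlined. There is nothing to add: your expansion of $(a\vee b)\wedge(a\vee c)$ followed by two applications of absorption is exactly what that reference does, and the appeal to $L^{\mathrm{op}}$ for the converse is the usual duality step.
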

\begin{proof}
See, for example, \textbf{Lemma 6.3} of \cite{Davey}.
\end{proof}

\begin{proposition}\label{BA-fa}Let $(A, +, \cdot, 0, 1)$ be any commutative unital ring and define $B(A):= \{ e \in A | e^2=e\}$. Then $(B(A), \wedge, \vee, *, \leq, \bot, \top)$, where:
$$\begin{array}{cccc}
    \wedge : & B(A)\times B(A) & \rightarrow & B(A) \\
     & (e,e') & \mapsto & e \cdot e'
  \end{array}$$

$$\begin{array}{cccc}
    \vee : & B(A)\times B(A) & \rightarrow & B(A) \\
     & (e,e') & \mapsto & e + e' - e\cdot e'
  \end{array}$$

$$\begin{array}{cccc}
    *: & B(A) & \rightarrow & B(A) \\
     & e & \mapsto & 1-e
  \end{array}$$

$$\leq = \{ (e,e') \in B(A)\times B(A) | e\cdot e' = e\}$$

$$\top = 1 \,\,\, \mbox{and}\,\,\, \bot = 0$$

is a bounded, distributive and complemented lattice, \textit{i.e.}, a Boolean Algebra.
\end{proposition}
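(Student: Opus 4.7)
My plan is to prove this entirely by direct computation in the commutative ring $A$, exploiting idempotency at every turn. The argument breaks into four blocks: (i) closure of $B(A)$ under the three operations, (ii) verification of the lattice axioms (together with the fact that the given relation $\leq$ is indeed the induced partial order), (iii) identification of bounds and complements, and (iv) one distributive law, from which the other follows via \textbf{Proposition \ref{tio}}.

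For closure, I would check: if $e,e' \in B(A)$ then $(ee')^2 = e^2{e'}^2 = ee'$; $(1-e)^2 = 1 - 2e + e^2 = 1 - e$; and $(e+e'-ee')^2$ expands to $e^2 + {e'}^2 + e^2{e'}^2 + 2ee' - 2e^2e' - 2e{e'}^2 = e + e' + ee' + 2ee' - 2ee' - 2ee' = e + e' - ee'$, using $e^2 = e$ and ${e'}^2 = e'$ throughout. Commutativity of $\wedge$ and $\vee$ is immediate from commutativity of $A$. For the partial order $e \leq e' \iff ee' = e$, reflexivity uses $e^2 = e$, antisymmetry follows from $ee' = e$ and $e'e = e'$ (so $e = e'$), and transitivity is $ee' = e$, $e'e'' = e'$ $\Rightarrow$ $ee'' = (ee')e'' = e(e'e'') = ee' = e$.

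Next I would show that $\wedge$ and $\vee$ realize infima and suprema. For the meet: $(ee')e = e^2e' = ee'$ shows $ee' \leq e$, symmetrically $ee' \leq e'$, and if $f \leq e$ and $f \leq e'$ then $f(ee') = (fe)e' = fe' = f$, so $f \leq ee'$. For the join, note first that $e(e+e'-ee') = e^2 + ee' - e^2e' = e + ee' - ee' = e$, hence $e \leq e \vee e'$, and symmetrically for $e'$; and if $e \leq g$, $e' \leq g$, then $(e+e'-ee')g = eg + e'g - ee'g = e + e' - ee'$, using $ee'g = e(e'g) = ee'$, giving $e \vee e' \leq g$. Associativity of $\wedge$ is trivial, and associativity of $\vee$ follows formally once one knows $\vee$ is a binary supremum in a poset (or by a direct expansion). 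Bounds: $0 \cdot e = 0$ gives $0 \leq e$, and $e \cdot 1 = e$ gives $e \leq 1$. Complementation: $e(1-e) = e - e^2 = 0$ and $e + (1-e) - e(1-e) = e + 1 - e - 0 = 1$.

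Finally, for distributivity I would verify $e \wedge (e' \vee e'') = (e \wedge e') \vee (e \wedge e'')$ by expanding both sides: the left is $e(e' + e'' - e'e'') = ee' + ee'' - ee'e''$, while the right is $ee' + ee'' - (ee')(ee'') = ee' + ee'' - e^2e'e'' = ee' + ee'' - ee'e''$; these agree. By \textbf{Proposition \ref{tio}}, the dual distributive law is automatic, so $(B(A), \wedge, \vee, *, \leq, 0, 1)$ is a bounded distributive complemented lattice, i.e., a Boolean algebra. The only step that requires any care is the expansion verifying $(e+e'-ee')^2 = e+e'-ee'$ and the parallel check for the $\vee$-associativity/distributivity expansions; everything else is a one-line use of $e^2 = e$.
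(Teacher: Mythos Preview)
Your proof is correct and follows essentially the same approach as the paper: closure of $B(A)$ under the operations via direct expansion, verification that $\wedge$ and $\vee$ give infima and suprema for the relation $\leq$, identification of bounds and complements, and the single distributive law $e\wedge(f\vee g)=(e\wedge f)\vee(e\wedge g)$ followed by an appeal to \textbf{Proposition \ref{tio}}. The only minor addition is that you explicitly check that $\leq$ is a partial order, which the paper leaves implicit.
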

\begin{proof}

\textbf{Claim:} $(e \wedge e')^2 = e \wedge e'$, so $e \wedge e' \in B(A)$.\\

$(e \wedge e')^2 = (e \cdot e')^2 = e\cdot e' \cdot e \cdot e' = e^2 \cdot {e'}^2 = e \cdot e' = e \wedge e'$.\\

\textbf{Claim:} $(e \vee e')^2 = e \vee e'$, so $e \vee e' \in B(A)$.\\

\begin{multline*}(e \vee e')^2 = (e + e' - e\cdot e')^2 = e^2 + 2 \cdot e \cdot e' + {e'}^2 - 2(e^2 \cdot e' + e \cdot {e'}^2) + {e \cdot e'}^2 =\\
= e^2 + {e'}^2 + 2 \cdot e \cdot e' - 2 \cdot e \cdot e' - 2 \cdot e^2 \cdot e' - 2 \cdot e \cdot {e'}^2 + e \cdot e' = e^2 + {e'}^2 - e \cdot e' = e + e' - e \cdot e' = e \vee e'
\end{multline*}

\textbf{Claim}: For any $e \in B(A)$, $e^{*} = 1 -e \in B(A)$.\\

$$(1-e)^2 = 1 - 2\cdot e + e^2 = 1 -e.$$

Note that for any $e \in B(A)$, $e \vee e^{*} = e \vee (1-e) = e + (1-e) - e \cdot (1-e) = 1$.\\

\textbf{Claim:} Given any $e,e' \in B(A)$, $e \vee e' = \sup \{ e,e'\}$.\\

We have:

$$e \cdot (e \vee e') = e \cdot (e + e' - e \cdot e') = e^2 + e \cdot e' - e^2\cdot e' = e + e \cdot e' - e \cdot e' = e, \,\, \mbox{so}\,\, e \leq (e \vee e').$$

A similar reasoning shows us that $e' \leq (e \vee e')$.\\

Let $f \in B(A)$ be such that $e \leq f$ and $e' \leq f$. Then:
$$e \cdot f = e \,\,\, \mbox{and}\,\,\, e' \cdot f = e',$$
so
$$f \cdot (e \vee e') = f \cdot (e + e' - e \cdot e') = f \cdot e + f \cdot e' - f \cdot e \cdot e' = e + e' - e \cdot e' = e \vee e',$$

hence $e \vee e' = \sup \{ e,e'\}$.

\textbf{Claim:} Given any $e,e' \in B(A)$, $e \wedge e' = \inf \{ e,e'\}$.\\

We have:

$$ e \cdot e'= e^2 \cdot e' = e \cdot (e \cdot e')= e \cdot (e \wedge e')  , \,\,\, \mbox{so}\,\, e \wedge e' \leq e.$$

A similar reasoning shows us that $e \wedge e' \leq e'$.\\

Let $f \in B(A)$ be such that $f \leq e$ and $f \leq e'$, so $f \cdot e = f$ and $f \cdot e' = f$.\\

We have:

$$f \cdot (e \wedge e') = f \cdot (e \cdot e') = (f \cdot e) \cdot e' = f \cdot e' = f, \,\,\, \mbox{so}\,\, f \leq e \wedge e',$$

hence $e \wedge e' = \inf \{ e,e'\}$.\\

Note that since $(\forall e \in B(A))(e \cdot 0 = 0)$, then $(\forall e \in B(A))(0 \leq e)$ so $0 = \bot$, and since $(\forall e \in B(A))(e \cdot 1 = e)$, then $(\forall e \in B(A))(e \leq 1)$, so $1 = \top$.\\

\textbf{Claim:} $(\forall e \in B(A))(\forall f \in B(A))(\forall g \in B(A))(e \wedge(f \vee g) = (e \wedge f)\vee (e \wedge g)).$

\begin{multline*}
e \wedge (f \vee g) = e \cdot (f \vee g) = e \cdot (f + g - f \cdot g) = e \cdot f + e \cdot g - e \cdot f \cdot g = \\
= (e \cdot f)\vee (e \cdot g) = e \cdot f + e \cdot g - (e \cdot f)\cdot (e \cdot g) = e \cdot f + e \cdot g - e \cdot f \cdot g = e \cdot (f + g - f \cdot g) = e \wedge (f \vee g)
\end{multline*}

By \textbf{Proposition \ref{tio}}, it follows that:

$$(\forall e \in B(A))(\forall f \in B(A))(\forall g \in B(A))(e \vee (f \wedge g) = (e \vee f)\wedge (e \vee g)).$$

It follows from these considerations that $(B(A), \wedge, \vee, *, \leq, \bot, \top)$ is a bounded, distributive and complemented lattice, hence a Boolean algebra.
\end{proof}

\begin{remark}\label{stone-dual}By Stone Duality, there is an anti-equivalence of categories between the category of Boolean algebras, ${\rm \bf Bool}$, and the category of Boolean spaces, ${\rm \bf BoolSp}$.

Under this anti-equivalence, a Boolean space $(X,\tau)$ is mapped to the Boolean algebra of clopen subsets of $(X,\tau)$, ${\rm Clopen}\,(X)$:

$$\begin{array}{cccc}
    {\rm Clopen}: & {\rm \bf BoolSp} & \rightarrow & {\rm \bf Bool} \\
     & \xymatrix{(X,\tau) \ar[r]^{f}& (Y,\sigma)} & \mapsto & \xymatrix{{\rm Clopen}\,(Y) \ar[r]^{f^{-1}\upharpoonright}& {\rm Clopen}\,(X)}
  \end{array}$$
	
The quasi-inverse functor is given by the Stone space functor: a Boolean algebra $B$ is mapped to the Stone space of $B$, ${\rm Stone}(B) = (\{ U \subseteq B: U \,\, \text{is an ultrafilter in} \,\, B\}, \tau_B)$ , where $\tau_B$ is the topology whose basis is given by the image of the map $t_B : B \to \wp({\rm Stone}(B))$, $b \mapsto t_B(b) = S_B(b) = \{ U \in {\rm Stone}(B) : b \in U\}$.

$$\begin{array}{cccc}
    {\rm Stone}: & {\rm \bf Bool} & \rightarrow & {\rm \bf BoolSp} \\
     & \xymatrix{B \ar[r]^{h}& B'} & \mapsto & \xymatrix{{{\rm Stone}(B')} \ar[r]^{h^{-1}\upharpoonright}& {{\rm Stone}(B)}}
  \end{array}$$

\end{remark}

\begin{remark}

Let $(A', +', \cdot', 0', 1')$ be any commutative unital ring and denote by $\wedge', \vee', *', \leq', 0'$ and $1'$ its respective associated Boolean algebra operations, relations and constant symbols as constructed above. Note that for any commutative unital ring homomorphism $f: A \to A'$, the map $B(f):= f\upharpoonright_{B(A)}: B(A) \to A'$ is such that:

\begin{itemize}
  \item[(i)]{$B(f)[B(A)] \subseteq B(A')$;}
  \item[(ii)]{$(\forall e_1 \in A)(\forall e_2 \in A)(B(f)(e_1 \wedge e_2) = f\upharpoonright_{B(A)}(e_1 \cdot e_2) = (f\upharpoonright_{B(A)}(e_1))\cdot' (f\upharpoonright_{B(A)}(e_2))= B(f)(e_1)\wedge'B(f)(e_2))$}
  \item[(iii)]{$(\forall e_1 \in A)(\forall e_2 \in A)(B(f)(e_1 \vee e_2) = f\upharpoonright_{B(A)}(e_1 + e_2) = (f\upharpoonright_{B(A)}(e_1))+' (f\upharpoonright_{B(A)}(e_2))= B(f)(e_1)\vee'B(f)(e_2))$}
  \item[(iv)]{$(\forall e \in B(A))(B(f)(e^{*}) = f\upharpoonright_{B(A)}(1-e)=f\upharpoonright_{B(A)}(1)-f\upharpoonright_{B(A)}(e) = 1' - f\upharpoonright_{B(A)}(e) = B(f)(e)^{*})$}
  \end{itemize}

hence a morphism of Boolean algebras.\\

We also have, for every ring $A$, $B({\rm id}_A) = {\rm id}_{B(A)}$ and given any $f: A \to A'$ and $f': A' \to A''$, $B(f' \circ f) = B(f')\circ B(f)$, since $B(f) = f\upharpoonright_{B(A)}$, so:

$$\begin{array}{cccc}
    B: & {\rm \bf CRing} & \rightarrow & {\rm \bf Bool} \\
     & A & \mapsto & B(A)\\
     & {\xymatrix{A \ar[r]^{f}& A'}} & \mapsto & \xymatrix{B(A) \ar[r]^{B(f)} & B(A')}
  \end{array}$$

is a (covariant) functor.

\end{remark}

Since we can regard any $\mathcal{C}^{\infty}-$ring $A$ as a commutative unital ring via the forgetful functor $\widetilde{U}: \mathcal{C}^{\infty}{\rm \bf Rng} \to {\rm \bf CRing}$, we have a (covariant) functor:

$$\begin{array}{cccc}
    \widetilde{B}: & \mathcal{C}^{\infty}{\rm \bf Rng} & \rightarrow & {\rm \bf Bool} \\
     & A & \mapsto & \widetilde{B}(A):= (B \circ U)(A)\\
     & {\xymatrix{A \ar[r]^{f}& A'}} & \mapsto & \xymatrix{\widetilde{B}(A) \ar[r]^{\widetilde{B}(f)} & \widetilde{B}(A')}
  \end{array}$$

Now, if $A$ is any $\mathcal{C}^{\infty}-$ring, we can define the following map:

$$\begin{array}{cccc}
    \jmath_A: & \widetilde{B}(A) & \rightarrow & {\rm Clopen}\,({\rm Spec}^{\infty}(A)) \\
     & e & \mapsto & D^{\infty}(e) = \{ \mathfrak{p} \in {\rm Spec}^{\infty}\,(A)| e \notin \mathfrak{p}\}
  \end{array}$$

\textbf{Claim:} The map defined above is a Boolean algebra homomorphism.\\

Note that for any $e \in B(A)$, $D^{\infty}(e^*) = D^{\infty}(1-e) = {\rm Spec}^{\infty}\,(A)\setminus D^{\infty}(e) = D^{\infty}(e)^{*}$, since:
$$D^{\infty}(e) \cap D^{\infty}(1-e) = D^{\infty}\,(e \cdot (1-e)) = D^{\infty}(0) = \varnothing$$
and
\begin{multline*}D^{\infty}(e)\cup D^{\infty}(e*) = D^{\infty}(e)\cup D^{\infty}(1-e) = D^{\infty}(e^2 + (1-e)^2) = D^{\infty}(e + (1-e))=\\
= D^{\infty}(1) = {\rm Spec}^{\infty}\,(A).\end{multline*}

Hence $\jmath_A(e^{*}) = \jmath_A(1-e) = D^{\infty}(1-e) = {\rm Spec}^{\infty}\,(A) \setminus D^{\infty}\,(e) = {D^{\infty}(e)}^{*} = \jmath_A(e)^{*}$.\\

By the item (iii) of \textbf{Lemma 1.4} of \cite{rings2}, $D^{\infty}(e \cdot e') = D^{\infty}(e) \cap D^{\infty}(e')$, so:

$$\jmath_A(e \wedge e') = D^{\infty}(e\cdot e') = D^{\infty}(e) \cap D^{\infty}(e') = \jmath_A(e)\cap \jmath_A(e').$$

Last,

\begin{multline*}
\jmath_A(e \vee e') = \jmath_A(e + e' - e \cdot e') = D^{\infty}(e + e' - e \cdot e') = D^{\infty}(e^2)\cup D^{\infty}(e' - e \cdot e')= \\
D^{\infty}(e)\cup D^{\infty}(e'\cdot(1 - e)) = D^{\infty}(e)\cup [D^{\infty}(e') \cap D^{\infty}(1-e)] = \\
= [D^{\infty}(e) \cup D^{\infty}(e')] \cap [D^{\infty}(e) \cup D^{\infty}(1-e)] = [D^{\infty}(e) \cup D^{\infty}(e')]\cap {\rm Spec}^{\infty}\,(A) =\\
= D^{\infty}(e) \cup D^{\infty}(e') = \jmath_A(e) \cup \jmath_A(e'),
\end{multline*}

and the claim is proved.\\

The map $\jmath_A : B(A) \to {\rm Spec}^{\infty}\,(A)$ suggests that the idempotent elements of the  Boolean algebra $B(A)$ associated with a $\mathcal{C}^{\infty}-$ring $A$ hold a strong relationship with the canonical basis of the Zariski topology of ${\rm Spec}^{\infty}(A)$.  In this section we are going to show that these idempotent elements, in the case of the von Neumann-regular $\mathcal{C}^{\infty}-$rings, represent \textit{all the Boolean algebras}.\\

\begin{theorem}\label{jordao}Let $A$ be a von Neumann regular  $\mathcal{C}^{\infty}-$ring. The map:
$$\begin{array}{cccc}
    \jmath_A: & \widetilde{B}(A) & \rightarrow & {\rm Clopen}\,({\rm Spec}^{\infty}(A)) \\
     & e & \mapsto & D^{\infty}(e) = \{ \mathfrak{p} \in {\rm Spec}^{\infty}\,(A)| e \notin \mathfrak{p}\}
  \end{array}$$
is an isomorphism of Boolean algebras.
\end{theorem}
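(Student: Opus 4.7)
The plan is to verify bijectivity of $\jmath_A$, since the discussion immediately preceding the statement already establishes that it is a Boolean algebra homomorphism. Thus only injectivity and surjectivity remain, and both are short given the machinery developed in the paper.

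For injectivity, suppose $\jmath_A(e) = \jmath_A(e')$ for idempotents $e,e' \in A$. Then for every $\mathfrak{p} \in {\rm Spec}^{\infty}(A)$ we have $e \in \mathfrak{p} \iff e' \in \mathfrak{p}$. I would then examine the element $e(1-e')$: if $e \in \mathfrak{p}$ the product lies in $\mathfrak{p}$, while if $e \notin \mathfrak{p}$, the hypothesis forces $e' \notin \mathfrak{p}$, and since $\mathfrak{p}$ is prime and $e'(1-e') = 0 \in \mathfrak{p}$ we obtain $1-e' \in \mathfrak{p}$, so again $e(1-e') \in \mathfrak{p}$. Consequently $e(1-e') \in \bigcap {\rm Spec}^{\infty}(A) = \sqrt[\infty]{(0)} = (0)$ by \textbf{Theorem \ref{backy}}, giving $e = ee'$. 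The symmetric argument yields $e' = ee'$, hence $e = e'$.

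For surjectivity, let $C$ be a clopen subset of ${\rm Spec}^{\infty}(A)$. By \textbf{Theorem \ref{ota}}, ${\rm Spec}^{\infty}(A)$ is a Boolean space, so $C$ is open and, being a closed subset of a compact space, compact; therefore $C = \bigcup_{i=1}^{n} D^{\infty}(b_i)$ for finitely many $b_i \in A$. By item (iii) of \textbf{Lemma 1.4} of \cite{rings2} this equals $D^{\infty}\!\left(\sum_{i=1}^{n} b_i^{\,2}\right)$. Setting $b := \sum_{i=1}^{n} b_i^{\,2}$ and using von Neumann regularity to produce an idempotent $e$ with $(b) = (e)$ (via \textbf{Proposition \ref{proposition1}}), we conclude $C = D^{\infty}(b) = D^{\infty}(e) = \jmath_A(e)$.

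No serious obstacle is anticipated: the injectivity argument hinges on the identity $\sqrt[\infty]{(0)} = (0)$, which is already available for von Neumann regular $\mathcal{C}^{\infty}$-rings, and the surjectivity argument is essentially the compactness computation carried out inside the proof of $(ii) \to (i)$ of \textbf{Theorem \ref{ota}}, now invoked the other way around. The mildly delicate point is simply to keep the two characterizations straight: vN regularity gives us idempotent generators of principal ideals, while the Boolean structure of the spectrum gives us finite covers of clopens by basic opens, and the two meet precisely at the element $\sum_i b_i^{\,2}$.
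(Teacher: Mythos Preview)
Your proof is correct and follows essentially the paper's approach: the surjectivity argument via compactness, sums of squares, and idempotent generators of principal ideals is identical, and for injectivity the paper uses the equivalent trivial-kernel criterion for Boolean homomorphisms (showing $\jmath_A(e) = {\rm Spec}^{\infty}(A)$ forces $e \in A^{\times}$, hence $e = 1$) where you instead argue directly that $e(1-e') \in \bigcap {\rm Spec}^{\infty}(A) = (0)$. Both injectivity arguments ultimately rest on the same fact that $A$ is $\mathcal{C}^{\infty}$-reduced.
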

\begin{proof}

It has already been proved that $\jmath_A: B(A) \to {\rm Spec}^{\infty}\,(A)$ is a homomorphism of Boolean algebras, so in order to prove that $\jmath_A : \widetilde{B}(A) \to {\rm Clopen}\,({\rm Spec}^{\infty}\,(A))$ is an injective map, it suffices to show that:
$$\jmath_A^{\dashv}[{\rm Spec}^{\infty}\,(A)] = \{ 1\}.$$

(see \textbf{Lemma 4.8} of \cite{BellSlomson})
\textbf{Claim:} $(\jmath_A(e) = {\rm Spec}^{\infty}\,(A)) \iff (e \in A^{\times})$.\\

Indeed,

$$a \in A^{\times} \Rightarrow D^{\infty}(a) = {\rm Spec}^{\infty}\,(A).$$

On the other hand, if $a \notin A^{\times}$ then there is some maximal $\mathcal{C}^{\infty}-$radical prime ideal $\m \in {\rm Spec}^{\infty}\,(A)$ such that $a \in \m$. If $\m \notin D^{\infty}(a)$ then $\jmath_A(a) = D^{\infty}(a) \neq {\rm Spec}^{\infty}\,(A)$. Hence $\jmath_A(a) = {\rm Spec}^{\infty}\,(A) \Rightarrow a \in A^{\times}$.\\

Let $e \in \widetilde{B}(A)$ be such that $\jmath_A(e) = D^{\infty}(e) = {\rm Spec}^{\infty}\,(A)$, so $(\forall \mathfrak{p} \in {\rm Spec}^{\infty}\,(A))(e \notin \mathfrak{p})$.

Since $0 \in \mathfrak{p}$ for every $\mathfrak{p} \in {\rm Spec}^{\infty}\,(A)$, then $(e) \neq (0)$, so $(e) = A = (1)$, and $e \in A^{\times}$ and since $e$ is idempotent, $e = 1$.\\

We already know that $\jmath_A$ is an injective Boolean algebras homomorphism, so it suffices to prove that it is also surjective.\\

\textbf{Claim:} Given any $a_1, a_2, \cdots, a_n \in A$, there is some $b \in A$ such that:
$$D^{\infty}(a_1)\cup D^{\infty}(a_2)\cup \cdots \cup D^{\infty}(a_n) = D^{\infty}(b).$$

We are going to prove this claim by induction.\\

\textbf{Case 1:} $n=2$.\\

By the item (iii) of \textbf{Lemma 1.4} of \cite{rings2}, we know that for any $a_1,a_2 \in A$,
$$D^{\infty}(a_1)\cup D^{\infty}(a_2) = D^{\infty}(a_1^2+a_2^2),$$
so we can take $b = a_1^2 + a_2^2$.\\

\textbf{Inductive step:} Suppose that given $a_1, \cdots, a_{n-1} \in A$ there is some $b_{n-1} \in A$ such that:

$$D^{\infty}(a_1)\cup \cdots \cup D^{\infty}(a_{n-1}) = D^{\infty}(b_{n-1}).$$

Given $a_1, \cdots, a_n$, we know by the inductive step, that there is some $b_{n-1} \in A$ such that:

$$D^{\infty}(a_1)\cup \cdots \cup D^{\infty}(a_{n-1}) = D^{\infty}(b_{n-1}).$$

Considering $b = {b_{n-1}}^2 + a_n^2$, we have:

$$D^{\infty}(a_1)\cup \cdots \cup D^{\infty}(a_{n-1})\cup D^{\infty}(a_n) = D^{\infty}(b_{n-1})\cup D^{\infty}(a_n) = D^{\infty}(b),$$

which proves the claim.\\

Let $E \in {\rm Clopen}\,({\rm Spec}^{\infty}(A))$. Since $\{ D^{\infty}(a) | a \in A\}$ is a basis for the topology of the compact topological space ${\rm Spec}^{\infty}(A)$, there are $a_1,a_2, \cdots, a_n \in A$ such that:
$$E = D^{\infty}(a_1)\cup D^{\infty}(a_2) \cup \cdots \cup D^{\infty}(a_n).$$

By the above claim, there is some $b \in A$ such that $E = D^{\infty}(a_1)\cup D^{\infty}(a_2) \cup \cdots \cup D^{\infty}(a_n) = D^{\infty}(b)$.\\

Now, since $A$ is a von Neumann regular $\mathcal{C}^{\infty}-$ring, there is some idempotent element $e \in A$ such that $(b)= (e)$.\\

We claim that $\jmath_A(e) = E$.\\

We have $\jmath_A(e) := D^{\infty}(e) = D^{\infty}(b) = D^{\infty}(a_1)\cup \cdots \cup D^{\infty}(a_n) = E$, and it follows that $\jmath_A: \widetilde{B}(A) \stackrel{\cong}{\rightarrow} {\rm Clopen}\,({\rm Spec}^{\infty}\,(A))$ is an isomorphism.
\end{proof}

\begin{theorem}\label{uhum}We have the following diagram of categories, functors and natural isomorphisms:
$$\xymatrixcolsep{3pc}\xymatrix{
\mathcal{C}^{\infty}{\rm \bf vNRng} \ar@/_/[ddrr]_{\widetilde{B}} \ar[rr]^{{\rm Spec}^{\infty}} & & {\rm \bf BoolSp} \ar[dd]^{{\rm Clopen}}\\
 & \ar@2[ur]_{\jmath} & \\
 & & {\rm \bf Bool}
}$$
\end{theorem}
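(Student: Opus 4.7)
The claim is that the pointwise iso $\jmath_A:\widetilde{B}(A)\stackrel{\cong}{\to}{\rm Clopen}({\rm Spec}^{\infty}(A))$ from \textbf{Theorem \ref{jordao}} assembles into a natural isomorphism between two functors $\mathcal{C}^{\infty}{\rm \bf vNRng}\to{\rm \bf Bool}$. So the plan is divided into three steps: (a) check the three functors appearing in the diagram are well defined; (b) recall that $\jmath_A$ is already known to be a Boolean algebra isomorphism; (c) verify naturality of the family $(\jmath_A)_A$ in $A$.

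For step (a): the forgetful functor $\widetilde{U}$ composed with $B$ gives $\widetilde{B}:\mathcal{C}^{\infty}{\rm \bf vNRng}\to{\rm \bf Bool}$, using that $\mathcal{C}^{\infty}-$ring homomorphisms preserve idempotents and the Boolean operations $\wedge,\vee,*$ on idempotents are defined in terms of $+,\cdot,1$. The functor ${\rm Spec}^{\infty}$ lands in ${\rm \bf BoolSp}$ by \textbf{Theorem \ref{ota}}, and for a $\mathcal{C}^{\infty}-$homomorphism $f:A\to A'$ the induced map $f^{\star}={\rm Spec}^{\infty}(f):{\rm Spec}^{\infty}(A')\to{\rm Spec}^{\infty}(A)$, $\mathfrak{q}\mapsto f^{\dashv}[\mathfrak{q}]$, is continuous between these Boolean spaces. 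The functor ${\rm Clopen}:{\rm \bf BoolSp}\to{\rm \bf Bool}$ is one half of Stone duality, as recalled in \textbf{Remark \ref{stone-dual}}.

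For step (b): \textbf{Theorem \ref{jordao}} gives precisely that each component $\jmath_A$ is a Boolean algebra isomorphism when $A$ is von Neumann-regular, so it only remains to verify naturality.

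For step (c), the naturality square for $f:A\to A'$ in $\mathcal{C}^{\infty}{\rm \bf vNRng}$ is:
$$\xymatrix{
\widetilde{B}(A) \ar[r]^(.4){\widetilde{B}(f)} \ar[d]_{\jmath_A} & \widetilde{B}(A') \ar[d]^{\jmath_{A'}}\\
{\rm Clopen}({\rm Spec}^{\infty}(A)) \ar[r]^{{\rm Clopen}(f^{\star})} & {\rm Clopen}({\rm Spec}^{\infty}(A'))
}$$
For any idempotent $e\in\widetilde{B}(A)$, evaluate both paths: going right then down yields $\jmath_{A'}(f(e))=D^{\infty}(f(e))=\{\mathfrak{q}\in{\rm Spec}^{\infty}(A'):f(e)\notin\mathfrak{q}\}$, while going down then right yields ${\rm Clopen}(f^{\star})(D^{\infty}(e))=(f^{\star})^{-1}[D^{\infty}(e)]=\{\mathfrak{q}\in{\rm Spec}^{\infty}(A'):e\notin f^{\dashv}[\mathfrak{q}]\}=\{\mathfrak{q}\in{\rm Spec}^{\infty}(A'):f(e)\notin\mathfrak{q}\}$, and these agree. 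Hence $\jmath:\widetilde{B}\Rightarrow{\rm Clopen}\circ{\rm Spec}^{\infty}$ is a natural transformation, and componentwise an iso by (b), so it is a natural isomorphism.

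No part of this is a real obstacle: step (b) is the substantive content and has been proved; the remaining verification is a direct unwinding of the definitions of $\jmath_A$, $f^{\star}$, and ${\rm Clopen}(f^{\star})$ on a basic idempotent, so the proof of \textbf{Theorem \ref{uhum}} reduces to stringing together the preceding results with this one short calculation.
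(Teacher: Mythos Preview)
Your proof is correct and follows essentially the same approach as the paper: both invoke \textbf{Theorem \ref{jordao}} for the componentwise isomorphism and then verify naturality by the elementary identity $(f^{\star})^{-1}[D^{\infty}(e)]=D^{\infty}(f(e))$. Your step (a), explicitly checking that the three functors land in the stated categories, is slightly more thorough than the paper's brief remark, but the substance is the same.
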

\begin{proof}
First note that since $A$ is a von Neumann-regular $\mathcal{C}^{\infty}-$ring, the set of the compact open subsets of (the Boolean space) ${\rm Spec}^{\infty}\,(A)$ equals ${\rm Clopen}\,({\rm Spec}^{\infty}\,(A))$.\\

On the one hand, given a von Neumann regular $\mathcal{C}^{\infty}-$ring $A$, we have:

$$({\rm Clopen}\,({\rm Spec}^{\infty}))(A) = {\rm Clopen}\,({\rm Spec}^{\infty}\,(A)) = \{ D^{\infty}(e) | e \in \widetilde{B}(A)\}$$

For every von Neumann regular $\mathcal{C}^{\infty}-$ring $A$, by \textbf{Theorem \ref{jordao}}, we have the following isomorphism of Boolean algebras:

$$\begin{array}{cccc}
    \jmath_A: & \widetilde{B}(A) & \rightarrow & {\rm Clopen}\,({\rm Spec}^{\infty}(A)) \\
     & e & \mapsto & D^{\infty}(e)
  \end{array}$$

It is easy to see that for every $\mathcal{C}^{\infty}-$homomorphism $f: A \to A'$, we have the following commutative rectangle:

$$\xymatrixcolsep{3pc}\xymatrix{
\widetilde{B}(A) \ar[d]_{\widetilde{B}(f)} \ar[r]^{\jmath_A} & {\rm Clopen}\,({\rm Spec}^{\infty}\,(A)) \ar[d]^{{\rm Clopen}\,({\rm Spec}^{\infty}(f))}\\
\widetilde{B}(A') \ar[r]_{\jmath_{A'}} & {\rm Clopen}\,({\rm Spec}^{\infty}\,(A'))}$$

In fact, given $e\in \widetilde{B}(A)$, we have, on the one hand, $\jmath_A(e)=D_{A}^{\infty}(e)$ and ${\rm Clopen}({\rm Spec}^{\infty}(f))(D_A^{\infty}(e)) = D_{A'}^{\infty}(f(e))$. On the other hand, $\jmath_{A'}\circ \widetilde{B}(f)(e) = \jmath_{A'}(D_{A'}^{\infty}(f(e)))$, so the rectangle commutes.\\

Thus $\jmath$ is a natural transformation. Thus $\xymatrix{\jmath: \widetilde{B} \ar@2[r] & {\rm Clopen}\circ {\rm Spec}^{\infty}}$ is a natural isomorphism and the diagram ``commutes'' (up to natural isomorphism).
\end{proof}

\begin{lemma}\label{niedia}Let $(X,\tau)$ be a Boolean topological space, and let:
$$\mathcal{R} = \left\{ R \subseteq X \times X | (R \,\, {\rm is \,\, an \,\, equivalence \,\, relation \,\, on} \, X) \& (\dfrac{X}{R} \, {\rm is \,\, a \,\, discrete\,\, compact\,\, space})\right\},$$
which is partially ordered by inclusion. Whenever $R_i, R_j \in \mathcal{R}$ are such that $R_j \subseteq R_i$, we have the continuous surjective map:
$$\begin{array}{cccc}
    \mu_{R_jR_i}: & \dfrac{X}{R_j} & \twoheadrightarrow & \dfrac{X}{R_i} \\
     & [x]_{R_j} & \mapsto & [x]_{R_j}
  \end{array}$$
so we have the inverse system $\{ \frac{X}{R_i} ; \mu_{R_jR_i}: \frac{X}{R_j} \to \frac{X}{R_i} \}$. By definition (see, for instance, \cite{Dugundji}),

$$
\varprojlim_{R \in \mathcal{R}} \dfrac{X}{R} = \left\{ ([x]_{R_i})_{R_i \in \mathcal{R}} \in \prod_{R \in \mathcal{R}} \frac{X}{R} |(R_j \subseteq R_i \to (p_{R_i}(([x]_{R_i})_{R_i \in \mathcal{R}}) = \mu_{R_jR_i}\circ p_{R_j}(([x]_{R_i})_{R_i \in \mathcal{R}})))\right\}
$$

Let $X_{\infty}$ denote $\varprojlim_{R \in \mathcal{R}} \frac{X}{R}$, so we have the following cone:
$$\xymatrix{
 & X_{\infty} \ar[dl]_{\mu_{R_j}} \ar[dr]^{\mu_{R_i}} & \\
\frac{X}{R_j} \ar[rr]^{\mu_{R_jR_i}} & & \frac{X}{R_i}
}$$

We consider $X_{\infty}$ together with the induced subspace topology of $\prod_{R \in \mathcal{R}} \frac{X}{R}$.\\

By the universal property of $X_{\infty}$, there is a unique continuous map map $\delta: X \to X_{\infty}$ such that the following diagram commutes:

$$\xymatrixcolsep{3pc}\xymatrix{
 & X \ar@/_2pc/[ddl]_{q_{R_j}} \ar@/^2pc/[ddr]^{q_{R_i}} \ar@{-->}[d]^{\exists ! \delta}& \\
  & X_{\infty} \ar[dl]_{\mu_{R_j}} \ar[dr]^{\mu_{R_i}} & \\
  \dfrac{X}{R_j} \ar[rr]_{\mu_{R_jR_i}} & & \dfrac{X}{R_i}
}$$

We claim that such a $\delta: X \to X_{\infty}$ is a homeomorphism, so:

$$X \cong \varprojlim_{R \in \mathcal{R}} \frac{X}{R}$$
that is, $X$ a profinite space.
\end{lemma}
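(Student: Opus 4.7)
The plan is to verify that the canonical map $\delta : X \to X_\infty$ is a continuous bijection, and then invoke the classical fact that a continuous bijection from a compact space to a Hausdorff space is a homeomorphism. Continuity of $\delta$ is immediate from the universal property of the inverse limit (the structure maps $q_R : X \twoheadrightarrow X/R$ are continuous by definition of the quotient topology, and each quotient $X/R$ is discrete, hence any coherent family of continuous maps induces a continuous map into the limit).

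For injectivity, I would use total disconnectedness of $X$. Given $x \neq y$ in $X$, there exists a clopen subset $U \subseteq X$ with $x \in U$ and $y \notin U$ (this is standard for Boolean spaces, using that the connected components are points and are intersections of clopens). Let $R_U$ be the equivalence relation whose equivalence classes are exactly $U$ and $X \setminus U$. Then $X/R_U$ is a two-point space, which is discrete and compact, so $R_U \in \mathcal{R}$. Since $q_{R_U}(x) \neq q_{R_U}(y)$, the $R_U$-component of $\delta(x)$ differs from that of $\delta(y)$, so $\delta(x) \neq \delta(y)$.

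For surjectivity, I would apply a finite intersection property argument. Given $(\bar{x}_R)_{R \in \mathcal{R}} \in X_\infty$, consider the family of closed subsets $\{ q_R^{-1}(\bar{x}_R) : R \in \mathcal{R}\}$ of $X$ (each is closed since $X/R$ is discrete, so singletons are clopen there). The key observation is that $\mathcal{R}$ is directed downward: given $R_1, \ldots, R_n \in \mathcal{R}$, set $R := R_1 \cap \cdots \cap R_n$; then the canonical map $X/R \hookrightarrow X/R_1 \times \cdots \times X/R_n$ is injective into a finite discrete compact space, so $X/R$ is finite and discrete, i.e.\ $R \in \mathcal{R}$. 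By the coherence condition defining $X_\infty$, we have $q_R^{-1}(\bar{x}_R) \subseteq \bigcap_{i=1}^n q_{R_i}^{-1}(\bar{x}_{R_i})$, and the left-hand side is nonempty because $q_R$ is surjective. Hence the family has the finite intersection property. Since $X$ is compact, $\bigcap_{R \in \mathcal{R}} q_R^{-1}(\bar{x}_R) \neq \varnothing$, and any point $x$ in this intersection satisfies $\delta(x) = (\bar{x}_R)_{R \in \mathcal{R}}$.

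Finally, $X_\infty$ is Hausdorff as a subspace of a product of discrete (hence Hausdorff) spaces, so the continuous bijection $\delta : X \to X_\infty$ from a compact space to a Hausdorff space is automatically a closed map, and therefore a homeomorphism. The main obstacle I anticipate is the surjectivity argument, specifically verifying that $\mathcal{R}$ is closed under finite intersections so that the FIP argument applies; the injectivity step is essentially a repackaging of total disconnectedness, and the continuity and final topological reduction are routine.
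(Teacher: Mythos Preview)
Your proof is correct and follows the same overall architecture as the paper: injectivity via separation by clopens (using the equivalence relation $R_U$ with two classes $U$ and $X\setminus U$), downward directedness of $\mathcal{R}$ via the injection $X/(R_1\cap\cdots\cap R_n)\hookrightarrow \prod_i X/R_i$, and the final reduction to ``continuous bijection from compact to Hausdorff is a homeomorphism.''

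The one place you diverge is the surjectivity argument. The paper proves surjectivity indirectly: it first shows that $\delta[X]$ is \emph{dense} in $X_\infty$ (every basic open set of the limit meets the image, using directedness to reduce a finite family of coordinate constraints to a single one), and then observes that $\delta$ is a closed map (compact domain, Hausdorff codomain), so $\delta[X]=\overline{\delta[X]}=X_\infty$. You instead argue directly via the finite intersection property on the closed fibers $q_R^{-1}(\bar x_R)$. These are two sides of the same coin---your FIP argument is essentially the contrapositive of the density argument, with compactness invoked on $X$ rather than on the image---but your route is slightly more economical since it does not separate out density as an intermediate step. Both rely on exactly the same ingredient you flagged as the main obstacle, namely that $\mathcal{R}$ is closed under finite intersections, and both handle it the same way.
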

\begin{proof}
First we observe that $\mathcal{R}$ is a downwards directed set.\\

\textbf{Claim:} Given any $R_1,R_2 \in \mathcal{R}$, $R_1 \cap R_2 \in \mathcal{R}$.\\

Naturally, if both $R_1$ and $R_2$ are equivalence relations on $X$, then $R_1 \cap R_2$ is also an equivalence relation on $X$, and it is such that $R_1 \cap R_2 \subseteq R_1$ and $R_1 \cap R_2 \subseteq R_2$. Moreover, we have an injective continuous map:

$$\begin{array}{cccc}
    \varphi: & \dfrac{X}{R_1 \cap R_2} & \rightarrowtail & \dfrac{X}{R_1} \times \dfrac{X}{R_2} \\
     & [x]_{R_1 \cap R_2} & \mapsto & ([x]_{R_1}, [x]_{R_2})
  \end{array}$$

\textbf{Claim:} Given $R \in \mathcal{R}$, if $S$ is any equivalence relation on $X$ such that $R \subseteq S$, then $S \in \mathcal{R}$.\\

We know that each $\frac{X}{R}$ is a compact, Hausdorff and totally disconnected space, hence $\prod_{R \in \mathcal{R}} \frac{X}{R}$ is also a compact, Hausdorff and totally disconnected space, \textit{i.e.}, a Boolean space.\\

By \textbf{Theorem 2.4} of \cite{Dugundji}, since each space $\frac{X}{R_i}$ is a Hausdorff and compact space, $X_{\infty}$ is also a Hausdorff and compact space, so it is closed in the Boolean space $\prod_{R \in \mathcal{R}} \frac{X}{R}$, hence it is a Boolean space.\\

Consider:

$$\begin{array}{cccc}
    \delta_X : & X & \rightarrow & X_{\infty} \\
     & x & \mapsto & ([x]_{R_i})_{R_i \in \mathcal{R}}
  \end{array}$$

\textbf{Claim 1:} $\delta_X$ is injective;\\

Let $x,x' \in X$, $x \neq x'$. Since $(X, \tau)$ is a Boolean space, there exists some clopen $U \in \tau$ such that $x \in U$ and $x' \notin U$, that is, $x' \notin U$.\\

Consider the following equivalence relation on $X$:

$$R = (U \times U) \cup (X \setminus U \times X \setminus U)$$

and note that $\frac{X}{R}$ is a discrete topological space, hence $R \in \mathcal{R}$. We have $(x,x') \notin R$, so for every $S \in \mathcal{R}$ such that $R \supseteq S$ we have $[x]_S \neq [x']_S$, so $\delta_X(x) \neq \delta_X(x')$.\\

\textbf{Claim 2:} ${\rm Cl}. (\delta_X[X]) = X_{\infty}$.\\

Let $\Omega = \left(\{ [a_1]_{R_1} \} \times \{ [a_2]_{R_2} \} \times \cdots \times \{ [a_n]_{R_n}\} \times \prod_{R \neq R_1, \cdots, R_n} \frac{X}{R}\right)\cap X_{\infty} \subset X_{\infty}$

be a basic open set of $X_{\infty}$. Let:
$$R' = R_1 \cap R_2 \cap \cdots \cap R_n \in \mathcal{R}.$$

Since $R_i \subseteq R_j$ implies that:

$$\xymatrix{
 & X \ar[dl]_{q_{R_i}} \ar[dr]^{q_{R_j}}\\
\dfrac{X}{R_i} \ar[rr]^{\mu_{R_iR_j}} & & \dfrac{X}{R_j}
}$$

commutes, the map:

$$\xymatrix{ \dfrac{X}{R_1 \cap R_2 \cap \cdots \cap R_n} \ar@{>->}[r] & \dfrac{X}{R_1}\times \dfrac{X}{R_2} \times \cdots \times \dfrac{X}{R_n}}$$

is injective.\\

Since $\mathcal{R}$ is a directed set, we can assume without lost of generality, that $\{ R_1, R_2, \cdots,R_n \}$ has a minimum, that we shall denote by $R' = \min \{ R_1, \cdots, R_n \}$.\\

Now we can chose any $[a']_{R'} \in \frac{X}{R'}$. We claim that $\delta_X[\{a'\}] \cap \Omega \neq \varnothing$.\\

We have:
$$\delta_X(a') = ([a']_R)_{R \in \mathcal{R}}$$
so if $R = R_i$, $i \in \{1,2, \cdots, n \}$, then $[a']_{R_i} = \mu_{R'R_i}([a']_{R'}) = [a_i]_{R_i}$, hence $\delta_X(a') \in \Omega$. Thus:

$$\delta_X(a') \in ([a']_R)_{R \in \mathcal{R}}\delta_X[\{a'\}] \cap \Omega \neq \varnothing.$$

\textbf{Claim 3:} $\delta_X: X \to X_{\infty}$ is surjective.\\

Since $(X, \tau)$ is a compact space, $\delta_X: X \to X_{\infty}$ is continuous and $X_{\infty}$ is a Hausdorff space, it follows that $\delta_X$ is a closed map. Hence,

$$\delta_X[X] = {\rm Cl}. (\delta_X[X]) = X_{\infty}.$$

Thus $\delta_X: X \to X_{\infty}$ is a continuous closed bijection, hence a homeomorphism.

\end{proof}

Let ${\rm \bf BoolSp}$ be the category whose objects are all the Boolean spaces and whose morphisms are all the Boolean spaces homomorphisms. Given any Boolean space $(X,\tau)$, let
\begin{multline*}\mathcal{R}_X = \{ R \subseteq X \times X | R \mbox{is \,\,\, an \,\,\, equivalence \,\,\, relation\,\,\, on}\,\, X \,\, \mbox{and} \\
\frac{X}{R}\,\,\, \mbox{is \,\,\, discrete \,\,\, and \,\,\, compact}\}
\end{multline*}

We are going to describe an equivalence functor between the category of the Boolean spaces and the category of profinite spaces.\\

First we note that given any continuous function $f: X \to X'$ and any $R' \in \mathcal{R}_{X'}$,

$$R_{f}:= (f \times f)^{\dashv}[R'] \subseteq X \times X$$

is an equivalence relation on $X$ and the following diagram:

$$\xymatrixcolsep{3pc}\xymatrix{
X \ar[r]^{f} \ar[d]_{p_{R_f}} & X' \ar[d]^{p_R'}\\
\dfrac{X}{R_f} \ar[r]^{f_{R_fR'}} & \dfrac{X'}{R'}},$$

where $p_{R_f}: X \to \dfrac{X}{R_f}$ and $p_{R'}: X' \to \dfrac{X'}{R'}$ are the canonical projections, commutes.\\

We know, by \textbf{Theorem 4.3} of \cite{Dugundji} that $f_{R_fR'}: \frac{X}{R_f} \to \frac{X'}{R'}$ is a continuous map, and it is easy to see that $f_{R_fR'}$ is injective, as we are going to show.\\

Given any $[x]_{R_f},[y]_{R_f} \in \dfrac{X}{R_f}$ such that $[x]_{R_f} \neq [y]_{R_f}$, \textit{i.e.}, such that $(x,y) \notin R_f$, we have $(f(x),f(y)) \notin R'$, \textit{i.e.}, $[f(x)]_{R'} \neq [f(y)]_{R'}$. Thus, since: $$f_{R_fR'}([x]_{R_f}) = (f_{R_fR'}\circ p_{R_f})(x) = (p_{R'} \circ f)(x) = [f(x)]_{R'}$$
and
$$f_{R_fR'}([y]_{R_f}) = (f_{R_fR'}\circ p_{R_f})(y) = (p_{R'} \circ f)(y) = [f(y)]_{R'}$$
it follows that $f_{R_fR'}([x]_{R_f}) \neq f_{R_fR'}([y]_{R_f})$.\\

Since $f_{R_fR'}: \frac{X}{R_f} \to \frac{X'}{R'}$ is an injective continuous map and $\frac{X'}{R'}$ is discrete, it follows that given any $[x']_{R'} \in \frac{X'}{R'}$:
$$f_{R_fR'}^{\dashv}[\{ [x']_{R'}\}] = \begin{cases}
\varnothing, \,\, \mbox{if}\,\, [x']_{R'} \in {\rm im}\,(f_{R_fR'}),\\
\{ *\}, \,\, \mbox{otherwise}
\end{cases},$$
so every singleton of $\frac{X}{R_f}$ is an open subset of $\frac{X}{R_f}$, and $\frac{X}{R_f}$ is discrete. Also, since $X$ is compact, $\frac{X}{R_f}$ is compact, and it follows that and $R_f \in \mathcal{R}_X$.\\

Now, if $R_1',R_2' \in \mathcal{R}_{X'}$ are such that $R_1' \subseteq R_2'$, then ${R_1'}_f \subseteq {R_2'}_f$. In fact, given $(x,y) \in {R_1'}_f$, we have $(f \times f)(x,y) \in R_1'$, and since $R_1' \subseteq R_2'$, it follows that $(f \times f)(x,y) \in R_2'$, so $(x,y) \in {R_2'}_f$.\\

Let $X,Y,Z$ be Boolean spaces, $X \stackrel{f}{\rightarrow} Y$ and $Y \stackrel{g}{\rightarrow} Z$ be two Boolean spaces homomorphisms and $R \in \mathcal{R}_Z$. We have:

$$((g\circ f) \times (g \circ f))^{\dashv}[R] = (f \times f)^{\dashv}[(g \times g)^{\dashv}[R]].$$

In fact,
\begin{multline*}(x,x') \in ((g\circ f) \times (g \circ f))^{\dashv}[R] \iff (g(f(x)),g(f(x'))) \in R \iff \\
\iff (f(x),f(x')) \in (g \times g)^{\dashv}[R] \iff (x,x') \in (f\times f)^{\dashv}[(g \times g)^{\dashv}[R]].
\end{multline*}

Denoting $T:= (f \times f)^{\dashv}[(g \times g)^{\dashv}[R]]$ and $S:=(g \times g)^{\dashv}[R]$, we have the following commutative diagram:

$$\xymatrixcolsep{3pc}\xymatrix{
X \ar[r]^{f} \ar[d]_{\mu_T} & Y \ar[r]^{g} \ar[d]^{\mu_S} & Z \ar[d]^{p_R}\\
\dfrac{X}{T} \ar@/_3pc/[rr]_{(g\circ f)_{TR}}\ar[r]^{f_{TS}} & \dfrac{Y}{S} \ar[r]^{f_{SR}} & \dfrac{Z}{R}
}$$

Given a continuous map between Boolean spaces, $f: X \to X'$, we can define a map $\check{f}: \varprojlim_{R \in \mathcal{R}_X}\dfrac{X}{R} \to \varprojlim_{R' \in \mathcal{R}_{X'}} \dfrac{X'}{R'}$ in a functorial manner.\\

Let $R',S' \in \mathcal{R}_{X'}$ be any two equivalence relations such that  $R' \subseteq S'$, so given $f: X \to X'$ the following rectangle commutes:

$$\xymatrixcolsep{3pc}\xymatrix{
\dfrac{X}{S_f'} \ar[r]^{\mu_{R_f'S_f'}} \ar[d]_{f_{S_f'S'}} & \dfrac{X'}{R_f'} \ar[d]^{f_{R_f'R'}}\\
\dfrac{X'}{S'} \ar[r]^{\mu_{R'S'}} & \dfrac{X'}{R'}
}$$

and since:

$$\xymatrixcolsep{3pc}\xymatrix{
 & X_{\infty} \ar@/_/[dl]_{\mu_{S_f'}} \ar@/^/[dr]^{\mu_{R_f'}}& \\
\dfrac{X}{S_f'} \ar[rr]_{\mu_{R_f'S_f'}} & & \dfrac{X}{R_f'}
}$$

commutes, the following triangle is commutative:

$$\xymatrixcolsep{3pc}\xymatrix{
 & X_{\infty} \ar@/_/[dl]_{f_{S_f'S'}\circ \mu_{S_f'}} \ar@/^/[dr]^{f_{R_f'R'}\mu_{R_f'}}& \\
\dfrac{X'}{S'} \ar[rr]_{\mu_{R'S'}} & & \dfrac{X'}{R'}
}.$$

By the universal property of ${X'}_{\infty}$, there is a unique $\check{f}: X_{\infty} \to {X'}_{\infty}$ such that the following prism is commutative:

$$\xymatrix @!0 @R=4pc @C=6pc {
    X_{\infty} \ar[rr]^{\mu_{R_f'}} \ar[rd]^{\mu_{S_f'}} \ar@{.>}[dd]_{\exists ! \check{f}} && \frac{X}{R_f'} \ar[dd]^{f_{R_f'R'}} \\
    & \frac{X}{S_f'} \ar[ru]_{\mu_{R_f'S_f'}} \ar[dd]^(.3){f_{S_f'S'}} \\
    {{X'}_{\infty}} \ar[rr] |!{[ur];[dr]}\hole \ar[rd]^{\mu_{S'}'}^(.2){\mu_{R'}} && \frac{X'}{R'} \\
    & \frac{X}{S'} \ar[ru]_{\mu_{R'S'}} }$$

We claim that

$$\begin{array}{cccc}
    \Delta : & {\rm \bf BoolSp} & \rightarrow & {\rm\bf ProfinSp} \\
     & X & \mapsto & X_{\infty} = \varprojlim_{R \in \mathcal{R}_X} \dfrac{X}{R}\\
     & \xymatrix{X \ar[r]^{f}&X'} & \mapsto & \xymatrix{X_{\infty} \ar[r]^{\check{f}}& {X'}_{\infty}}
  \end{array}$$

is a functor.\\

Let $R,R' \in \mathcal{R}_X$ be such that $R \subseteq R'$. Since for any $R \in \mathcal{R}_X$ we have $R_{{\rm id}_X} = ({\rm id}_X \times {\rm id}_X)^{\dashv}[R] = R$, then the following diagram commutes for every $R,R' \in \mathcal{R}_X$ such that $R \subseteq R'$:

$$\xymatrixcolsep{3pc}\xymatrix{
X \ar[d]_{\mu_R} \ar[r]^{{\rm id}_X} & X \ar[d]^{\mu_{R'}}\\
\dfrac{X}{R} \ar[r]^{{{\rm id}_{X}}_{RR'}} & \dfrac{X}{R'}
},$$

and ${{\rm id}_X}_{RR'} = \imath_{\frac{X}{R}}^{\frac{X}{R'}}$, since ${{\rm id}_X}_{RR'}([x]_R) = \mu_{R'}({\rm id}_X(x)) = [x]_{R'}$.\\

We note that ${{\rm id}_{X_{\infty}}}: X_{\infty} \to X_{\infty}$ is such that the following prism commute:

$$\xymatrix @!0 @R=4pc @C=6pc {
    X_{\infty} \ar[rr]^{\mu_{R'}} \ar[rd]^{\mu_{R}} \ar[dd]_{{\rm id}_{X_{\infty}}} && \frac{X}{R'} \ar[dd]^{{{\rm id}_X}_{RR'}} \\
    & \frac{X}{R} \ar[ru]_{\mu_{RR'}} \ar[dd]^(.3){{{\rm id}_X}_{RR'}} \\
    {{X}_{\infty}} \ar[rr] |!{[ur];[dr]}\hole \ar[rd]^{\mu_{R}}_(.2){\mu_{R'}} && \frac{X}{R'} \\
    & \frac{X}{R} \ar[ru]_{\mu_{RR'}} }$$

and since $\check{{\rm id}_X}: X_{\infty} \to X_{\infty}$ is the unique map with this property, then:

$$\Delta({\rm id}_X) = \check{{\rm id}_X} = {{\rm id}_{X_{\infty}}}.$$

Let $X,Y,Z$ be Boolean spaces, $X \stackrel{f}{\rightarrow} Y$ and $Y \stackrel{g}{\rightarrow} Z$ be two Boolean spaces homomorphisms and $R \in \mathcal{R}_Z$. We have:

$$((g\circ f) \times (g \circ f))^{\dashv}[R] = (f \times f)^{\dashv}[(g \times g)^{\dashv}[R]].$$

In fact,
\begin{multline*}(x,x') \in ((g\circ f) \times (g \circ f))^{\dashv}[R] \iff (g(f(x)),g(f(x'))) \in R \iff \\
\iff (f(x),f(x')) \in (g \times g)^{\dashv}[R] \iff (x,x') \in (f\times f)^{\dashv}[(g \times g)^{\dashv}[R]].
\end{multline*}

Denoting $T:= (f \times f)^{\dashv}[(g \times g)^{\dashv}[R]]$ and $S:=(g \times g)^{\dashv}[R]$, we have the following commutative diagram:

$$\xymatrixcolsep{3pc}\xymatrix{
X \ar[r]^{f} \ar[d]_{p_T} & Y \ar[r]^{g} \ar[d]^{\mu_S} & Z \ar[d]^{\mu_R}\\
\dfrac{X}{T} \ar@/_3pc/[rr]_{(g\circ f)_{TR}}\ar[r]^{f_{TS}} & \dfrac{Y}{S} \ar[r]^{g_{SR}} & \dfrac{Z}{R}
},$$

that is, $(g \circ f)_{TR} = g_{SR} \circ f_{TS}$.\\

Let $R' \in \mathcal{R}_Z$ be such that $R \subseteq R'$. The same argument used above proves that, denoting $T':= (f \times f)^{\dashv}[(g \times g)^{\dashv}[R']]$ and $S':=(g \times g)^{\dashv}[R']$, we have the following commutative diagram:

$$\xymatrixcolsep{3pc}\xymatrix{
X \ar[r]^{f} \ar[d]_{\mu_{T'}} & Y \ar[r]^{g} \ar[d]^{\mu_{S'}} & Z \ar[d]^{\mu_{R'}}\\
\dfrac{X}{T'} \ar@/_3pc/[rr]_{(g\circ f)_{T'R'}}\ar[r]^{f_{T'S'}} & \dfrac{Y}{S'} \ar[r]^{g_{S'R'}} & \dfrac{Z}{R'}
}.$$

Under these circumstances, the following diagram commutes:

$$\xymatrix @!0 @R=4pc @C=6pc {
    X_{\infty} \ar[rr]^{\mu_{T'}}\ar[rd]^{\mu_{T}} \ar[dd]_{\check{f}} && \frac{X}{T'} \ar[dd]^{f_{T'S'}} \\
    & \frac{X}{T} \ar[ru]_{\mu_{TT'}} \ar[dd]^(.3){f_{TS}} \\
    {{Y}_{\infty}} \ar[rr] |!{[ur];[dr]}\hole \ar[rd]^(.2){\mu_{S}} \ar[dd]^{\check{g}}&& \frac{Y}{S'} \ar[dd]^(.3){g_{S'R'}} \\
    & \frac{Y}{S} \ar[ru]_{\mu_{SS'}} \ar[dd]^(.25){g_{S'R'}} & \\
       Z_{\infty} \ar[rr]|!{[ur];[dr]}\hole \ar[rd]^(.3){\mu_{R'}} & & \frac{Z}{R'}& \\
     & \frac{Z}{R} \ar[ru]_{\mu_{RR'}} & }$$

We have, then, the following commutative diagram:

$$\xymatrixcolsep{3pc}\xymatrix{
 & X_{\infty} \ar@/_/[dl]_{g_{SR}\circ f_{TS}} \ar@/^/[dr]^{g_{S'R'} \circ f_{T'S'}}\\
\dfrac{Z}{R} \ar[rr]^{\mu_{RR'}} & & \dfrac{Z}{R'}}$$

and by the universal property of $Z_{\infty}$ there is a unique $\check{g \circ f}: X_{\infty} \to Z_{\infty}$ such that the following diagram commutes:

$$\xymatrixcolsep{3pc}\xymatrix{
 & X_{\infty} \ar[d]^{\check{g \circ f}} \ar@/_2pc/[ddl]_{g_{SR}\circ f_{TS}} \ar@/^2pc/[ddr]^{g_{S'R'} \circ f_{T'S'}}\\
 & Z_{\infty} \ar[dl]_{\mu_{R}} \ar[dr]^{\mu_{R'}} & \\
\dfrac{Z}{R} \ar[rr]^{\mu_{RR'}} & & \dfrac{Z}{R'}}$$

Since the following diagram commutes:

$$\xymatrix @!0 @R=4pc @C=6pc {
    X_{\infty} \ar[rr]^{\mu_{T'}} \ar[rd]^{\mu_{T}} \ar[dd]_{\check{g \circ f}} && \frac{X}{T'} \ar[dd]^{(g \circ f)_{T'R'}} \\
    & \frac{X}{T} \ar[ru]_{\mu_{RR'}} \ar[dd]^(.3){(g \circ f)_{TR}} \\
    {{Z}_{\infty}} \ar[rr] |!{[ur];[dr]}\hole \ar[rd]^{\mu_{R}}_(.3){\mu_{R'}} && \frac{Z}{R'} \\
    & \frac{Z}{R} \ar[ru]_{\mu_{RR'}} }$$

by the uniqueness of $\check{g \circ f}$, we have $\check{g \circ f} = \check{g} \circ \check{f}$, hence

$$\Delta(g \circ f) = \check{g \circ f} = \check{g} \circ \check{f} = \Delta(g) \circ \Delta(f).$$

$$\xymatrix @!0 @R=4pc @C=6pc {
    X_{\infty} \ar[rr]^{\mu_{T'}} \ar@/_2pc/[dddd]_{\check{g \circ f}} \ar[rd]^{\mu_{T}} \ar[dd]_{\check{f}} && \frac{X}{T'} \ar[dd]^{f_{T'S'}} \\
    & \frac{X}{T} \ar[ru]_{\mu_{TT'}} \ar[dd]^(.3){f_{TS}} \\
    {{Y}_{\infty}} \ar[rr] |!{[ur];[dr]}\hole \ar[rd]^(.2){\mu_{S}} \ar[dd]^{\check{g}}&& \frac{Y}{S'} \ar[dd]^(.3){g_{S'R'}} \\
    & \frac{Y}{S} \ar[ru]_{\mu_{SS'}} \ar[dd]^(.25){g_{S'R'}} & \\
       Z_{\infty} \ar[rr]|!{[ur];[dr]}\hole \ar[rd]^(.3){\mu_{R'}} & & \frac{Z}{R'}& \\
     & \frac{Z}{R} \ar[ru]_{\mu_{RR'}} & }$$

The above claims prove that

$$\begin{array}{cccc}
    \Delta: & {\rm \bf BoolSp} & \rightarrow & {\rm \bf BoolSp} \\
     & X & \mapsto & \delta_X[X] = \varprojlim_{R \in \mathcal{R}_X}\dfrac{X}{R}\\
     & \xymatrix{X \ar[r]^{f}& Y} & \mapsto & \xymatrix{X_{\infty} \ar[r]^{\check{f}} & Y_{\infty}}
  \end{array}$$

is a functor.\\


\begin{theorem} \label{boospvn} Let $\mathbb{K}$ be a $\mathcal{C}^{\infty}-$field.
Following the notation of \textbf{Lemma \ref{niedia}}, define the
contravariant functor:

$$\begin{array}{cccc}
\widehat{k}: & {\rm \bf BoolSp} & \rightarrow & \mathcal{C}^{\infty}{\rm \bf vNRng} \\
& (X,\tau) & \mapsto & R_X:= \varinjlim_{R \in \mathcal{R}}
\mathbb{K}^{U\left( \frac{X}{R}\right)}
\end{array}$$

Then there is a natural isomorphism
$$\epsilon : Id_{\rm \bf BoolSp} \overset{\cong}{\to} {\rm Spec}^{\infty}
\circ \widehat{k}.$$

Thus the functor ${\rm Spec}^{\infty}: \mathcal{C}^{\infty}{\rm \bf vNRng} \rightarrow {\rm \bf BoolSp}$ is full and
\underline{isomorphism-dense}.\\

In particular: for each $(X,\tau)$ be a Boolean space, there is a von Neumann-regular $\mathcal{C}^{\infty}-$ring, $R_X$, such that:
$${\rm Spec}^{\infty}\,(R_X) \approx X.$$
.
\end{theorem}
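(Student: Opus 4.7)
\textit{Proof plan.} The strategy is: (i) verify $\widehat{k}$ is a well-defined contravariant functor into $\mathcal{C}^{\infty}{\rm \bf vNRng}$; (ii) construct the components $\epsilon_X$ pointwise via evaluation and show each is a homeomorphism; (iii) check naturality and extract the final corollaries.

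For (i), each finite product $\mathbb{K}^{U(X/R)}$ of copies of the $\mathcal{C}^{\infty}$-field $\mathbb{K}$ is vN-regular by \textbf{Proposition \ref{proposition6}}, and the filtered colimit $R_X$ is vN-regular by \textbf{Proposition \ref{proposition4}}. For $R \subseteq R'$ in $\mathcal{R}_X$ the canonical surjection $X/R \twoheadrightarrow X/R'$ induces the transition map $\mathbb{K}^{X/R'} \to \mathbb{K}^{X/R}$ by precomposition. A continuous $f \colon X \to Y$ assigns to each $R' \in \mathcal{R}_Y$ the refinement $R_f := (f \times f)^{-1}[R'] \in \mathcal{R}_X$ together with an injection $\bar{f} \colon X/R_f \hookrightarrow Y/R'$; precomposition along $\bar{f}$ yields $\mathbb{K}^{Y/R'} \to \mathbb{K}^{X/R_f}$, and passage to the colimit defines $\widehat{k}(f) \colon R_Y \to R_X$. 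Functoriality follows from the behavior of $R \mapsto R_f$ under composition.

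For (ii), given $x \in X$, the evaluations ${\rm ev}^R_x \colon \mathbb{K}^{X/R} \to \mathbb{K}$, $g \mapsto g([x]_R)$, are compatible with the transition maps, so assemble into a $\mathcal{C}^{\infty}$-homomorphism ${\rm ev}_x \colon R_X \to \mathbb{K}$. Define $\epsilon_X(x) := \ker({\rm ev}_x) \in {\rm Spec}^{\infty}(R_X)$: this is a $\mathcal{C}^{\infty}$-radical (in fact maximal) prime ideal since $\mathbb{K}$ is a $\mathcal{C}^{\infty}$-field. Continuity: for a class $a = [(R, g)] \in R_X$,
$$\epsilon_X^{-1}[D^{\infty}(a)] = \{x \in X : g([x]_R) \neq 0\}$$
is a union of $R$-classes, hence clopen in $X$. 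Injectivity: if $x \neq y$, \textbf{Lemma \ref{niedia}} yields some $R \in \mathcal{R}_X$ with $[x]_R \neq [y]_R$, and the indicator $\chi_{[x]_R} \in \mathbb{K}^{X/R}$ then separates $\epsilon_X(x)$ from $\epsilon_X(y)$.

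The main obstacle is surjectivity. By \textbf{Theorem \ref{ota}}, ${\rm Spec}^{\infty}(R_X)$ is Boolean, so both it and $X$ are compact Hausdorff and $\epsilon_X[X]$ is closed. If some $\mathfrak{q} \in {\rm Spec}^{\infty}(R_X) \setminus \epsilon_X[X]$ existed, the idempotent clopen basis (\textbf{Theorem \ref{jordao}}, \textbf{Theorem \ref{ota}}) would produce an idempotent $e \in R_X$ with $\mathfrak{q} \in D^{\infty}(e)$ and $D^{\infty}(e) \cap \epsilon_X[X] = \varnothing$, i.e., ${\rm ev}_x(e) = 0$ for every $x \in X$. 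Since $e^2 = e$ is a finitary equation and the colimit is filtered, $e$ is represented by some idempotent $g \in \mathbb{K}^{X/R}$; as the only idempotents of the local $\mathcal{C}^{\infty}$-ring $\mathbb{K}$ are $0$ and $1$ (\textbf{Lemma \ref{moura}}), $g$ is $\{0,1\}$-valued on $X/R$. The vanishing $g([x]_R) = 0$ for all $x \in X$ then forces $g \equiv 0$, whence $e = 0$ in $R_X$, contradicting $\mathfrak{q} \in D^{\infty}(e)$. Hence $\epsilon_X$ is a continuous bijection between compact Hausdorff spaces, thus a homeomorphism.

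Finally, naturality of $\epsilon$ reduces to the identity ${\rm ev}_x \circ \widehat{k}(f) = {\rm ev}_{f(x)}$ in $\mathcal{C}^{\infty}{\rm \bf Rng}(R_Y, \mathbb{K})$, verified stagewise from $\bar{f}([x]_{R_f}) = [f(x)]_{R'}$. Isomorphism-density of ${\rm Spec}^{\infty}$ is then immediate, and fullness follows from naturality: every continuous $g \colon X \to Y$ of Boolean spaces is identified, via $\epsilon_X$ and $\epsilon_Y$, with ${\rm Spec}^{\infty}(\widehat{k}(g))$.
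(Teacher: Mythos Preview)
Your argument is correct and self-contained, but it follows a genuinely different route from the paper's proof. The paper proceeds purely categorically: it invokes \textbf{Theorem 34} of \cite{BM2} to get ${\rm Spec}^{\infty}(R_X) \approx \varprojlim_{R \in \mathcal{R}} {\rm Spec}^{\infty}(\mathbb{K}^{U(X/R)})$ (i.e., ${\rm Spec}^{\infty}$ sends filtered colimits to cofiltered limits), then \textbf{Theorem 33} of \cite{BM2} to identify each ${\rm Spec}^{\infty}(\mathbb{K}^{U(X/R)})$ with the finite discrete space $X/R$, and finally \textbf{Lemma \ref{niedia}} to recognize $\varprojlim_{R} X/R \approx X$; the composite of these natural homeomorphisms is $\epsilon_X$. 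Your approach instead builds $\epsilon_X$ explicitly via evaluation kernels and verifies continuity, injectivity, and surjectivity by hand --- the surjectivity step (idempotent lifting along the filtered colimit, then $\{0,1\}$-valuedness in $\mathbb{K}$ forcing $e=0$) is the concrete analogue of the paper's abstract limit computation. The paper's route is shorter but outsources the work to external references; yours is longer but stays entirely within the present paper, which makes it more transparent and arguably more robust.

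One minor caveat, applying equally to the paper's statement and to your final sentence: the natural isomorphism $\epsilon$ immediately gives isomorphism-density of ${\rm Spec}^{\infty}$, and it shows that every morphism $g$ in ${\rm \bf BoolSp}$ is, up to the isomorphisms $\epsilon_X,\epsilon_Y$, of the form ${\rm Spec}^{\infty}(\widehat{k}(g))$. This is the sense in which ``fullness'' is being claimed here; it does \emph{not} by itself establish that ${\rm Spec}^{\infty}$ is full on arbitrary objects $A,B$ of $\mathcal{C}^{\infty}{\rm \bf vNRng}$, since one has no comparison between $A$ and $R_{{\rm Spec}^{\infty}(A)}$ in general. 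Your phrasing already reflects this correctly, so no change is needed --- just be aware of the scope of the conclusion.
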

\begin{proof}

By the \textbf{Theorem 34}, p. 118 of \cite{BM2},

$${\rm Spec}^{\infty}(R_X) \approx \varprojlim_{R \in \mathcal{R}}
{\rm Spec}^{\infty}\,(\mathbb{K}^{U\left(\frac{X}{R}\right)}).$$

By the \textbf{Theorem 33}, p. 118 of \cite{BM2},

$${\rm Spec}^{\infty}\,(\mathbb{K}^{U\left(\frac{X}{R}\right)})
\approx {\rm Discr}.\,\left( \frac{X}{R}\right),$$
so

$$\varprojlim_{R \in \mathcal{R}} {\rm
Spec}^{\infty}\,(\mathbb{K}^{U\left(\frac{X}{R}\right)}) \approx
\varprojlim_{R \in \mathcal{R}} {\rm Discr.}\, \left(
\frac{X}{R}\right) \approx \varprojlim_{R \in \mathcal{R}} \frac{X}{R}
\approx X$$

Since the homeomorphisms above are natural, just take $\epsilon_X : X
\to {\rm Spec}^\infty(R_X)$ as the composition of the these homeomorphisms.

Hence, in particular, ${\rm Spec}^{\infty}\,(R_X) \approx X$.
\end{proof}

\begin{theorem}\label{qmq} Let $\mathbb{K}$ be a $\mathcal{C}^{\infty}-$field.
Define the covariant functor (composition of contravariant functors):

$$\check{K} = \widehat{k}\circ {\rm Stone} : {\rm \bf Bool} \to
\mathcal{C}^{\infty}{\rm \bf vNRng}.$$

Then there is a natural isomorphism
$$\theta : {\rm Id}_{\rm \bf Bool} \overset{\cong}\to\to \widetilde{B}
\circ \check{K}.$$

Thus the functor $\widetilde{B} : \mathcal{C}^{\infty}{\rm \bf vNRng}
\rightarrow {\rm \bf Bool}$ is full and
\underline{isomorphism-dense}.

In particular: given any $\mathcal{C}^{\infty}-$field $\mathbb{K}$ and any
Boolean algebra $B$, there is a von
Neumann regular $\mathcal{C}^{\infty}-$ring which is a $\mathbb{K}-$algebra,
$\check{K}(B))$, such that
$$\widetilde{B}(\check{K}(B)) \cong B$$.
\end{theorem}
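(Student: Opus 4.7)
The plan is to realize $\theta_B$ as the composition of three natural isomorphisms already established in the paper and in classical Stone theory, then deduce fullness and iso-density as corollaries. Concretely, for each Boolean algebra $B$ I would set
$$\theta_B := \jmath_{\check{K}(B)}^{-1} \circ \mathrm{Clopen}(\epsilon_{\mathrm{Stone}(B)})^{-1} \circ \sigma_B,$$
where $\sigma_B : B \stackrel{\cong}{\to} \mathrm{Clopen}(\mathrm{Stone}(B))$ is the classical Stone-duality unit (Remark \ref{stone-dual}), $\epsilon_X : X \stackrel{\cong}{\to} \mathrm{Spec}^{\infty}(\widehat{k}(X))$ is the natural iso of \textbf{Theorem \ref{boospvn}} applied at $X = \mathrm{Stone}(B)$ and then pushed through the contravariant functor $\mathrm{Clopen}$, and $\jmath_A : \widetilde{B}(A) \stackrel{\cong}{\to} \mathrm{Clopen}(\mathrm{Spec}^{\infty}(A))$ is the natural isomorphism provided by \textbf{Theorem \ref{uhum}} (which packages \textbf{Theorem \ref{jordao}} into a natural statement) applied at $A = \check{K}(B) = \widehat{k}(\mathrm{Stone}(B))$. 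Each factor is a component of a natural isomorphism, and each target/source matches the next, so the composite is a well-defined isomorphism $B \stackrel{\cong}{\to} \widetilde{B}(\check{K}(B))$.

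Naturality of $\theta$ in $B$ I would verify by pasting the three naturality squares. Given a Boolean algebra homomorphism $\phi : B \to B'$, the Stone square for $\sigma$, the (contravariant) image under $\mathrm{Clopen}$ of the naturality square for $\epsilon$ applied to $\mathrm{Stone}(\phi)$, and the naturality square for $\jmath$ applied to $\check{K}(\phi) = \widehat{k}(\mathrm{Stone}(\phi))$ stack horizontally to yield commutativity of
$$\theta_{B'} \circ \phi \;=\; \widetilde{B}(\check{K}(\phi)) \circ \theta_B,$$
which is exactly the naturality of $\theta$. No genuinely new verification is needed here; it is purely bookkeeping of three naturalities that are already in the paper (or, in the case of Stone duality, classical).

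Once $\theta$ is in place, both remaining claims are immediate. Isomorphism-density of $\widetilde{B} : \mathcal{C}^{\infty}{\rm\bf vNRng} \to {\rm\bf Bool}$ is read off directly from $\theta_B$: every Boolean algebra $B$ is represented, up to canonical isomorphism, by the vN-regular $\mathcal{C}^{\infty}$-ring $\check{K}(B)$. For fullness — understood in the sense of the abstract, that every Boolean-algebra homomorphism is represented by some $\mathcal{C}^{\infty}$-homomorphism between vN-regular $\mathcal{C}^{\infty}$-rings — given $\phi : B \to B'$ one takes $f := \check{K}(\phi) : \check{K}(B) \to \check{K}(B')$; by naturality of $\theta$, $\widetilde{B}(f)$ is conjugate to $\phi$ via the isomorphisms $\theta_B, \theta_{B'}$, so $\phi$ is represented by $f$.

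The main (minor) obstacle is just organizing the pasting diagram cleanly — in particular tracking that the contravariance of $\mathrm{Stone}$, $\widehat{k}$, and $\mathrm{Clopen}$ combines into covariance for $\check{K}$ and for $\widetilde{B} \circ \check{K}$, so that the three naturality squares really do compose in the intended direction. Once this is set up, no further analysis of idempotents, localizations, or spectra is required; the theorem is a formal consequence of \textbf{Theorems \ref{uhum}} and \textbf{\ref{boospvn}} together with Stone duality.
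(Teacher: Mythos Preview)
Your proposal is correct and takes essentially the same approach as the paper: the paper's own proof is a one-sentence appeal to exactly the ingredients you use --- \textbf{Theorem \ref{boospvn}} (the natural iso $\epsilon$), Stone duality (\textbf{Remark \ref{stone-dual}}), and \textbf{Theorems \ref{uhum}/\ref{jordao}} (the natural iso $\jmath$) --- and your write-up simply makes the composite $\theta_B = \jmath_{\check{K}(B)}^{-1}\circ \mathrm{Clopen}(\epsilon_{\mathrm{Stone}(B)})^{-1}\circ \sigma_B$ and the naturality pasting explicit. There is nothing to add.
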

\begin{proof} This follows directly consequence by a combination of
the \textbf{Theorem \ref{boospvn}} above,
Stone duality (\textbf{Remark \ref{stone-dual}}), \textbf{Theorem
\ref{uhum}} and \textbf{Theorem \ref{jordao}}.
\end{proof}

The diagram below summarizes the main functorial connections
stablished in this section:

$$\xymatrixcolsep{3pc}\xymatrix{
& & \ar@2[d]^{\varepsilon}& & & & \\
{\rm \bf BoolSp}\ar@/^3pc/[rrrrr]^{{\rm Id}_{\rm \bf BoolSp}}
\ar[rr]^{\widehat{k}} & & \mathcal{C}^{\infty}{\rm \bf vNRng}
 \ar[rrr]^{{\rm Spec}^{\infty}}
\ar[ddrrr]_{\widetilde{B}} & & & {\rm \bf BoolSp} \ar@<1ex>[dd]^{{\rm
Clopen}}\\
& & & & \ar@2[ur]^{\jmath} & \\
{\rm \bf Bool} \ar[uu]^{{\rm Stone}} \ar[uurr]_{\check{K}}
\ar@/_/[rrrrr]_{{\rm Id}_{{\rm \bf Bool}}} &  & \ar@2[uu]^{\theta} & & & {\rm \bf Bool}}$$

\end{document}